\newcommand{\linep}[2]{ \ensuremath{  %
 \begin{xy}                           
  (0,-2)*+UR{\scriptstyle #1}="a",    
  (3,3)*+UR{\scriptstyle #2}="b",     
  "a";"b"**\dir{-}?>*\dir{>},         
 \end{xy}                             %
} }
\newcommand{\graphpp}[3]{ \ensuremath{
 \begin{xy}                           
  (0,-2)*+UR{\scriptstyle #1}="a",    
  (3,3)*+UR{\scriptstyle #2}="b",     
  (6,-2)*+UR{\scriptstyle #3}="c",    
  "a";"b"**\dir{-}?>*\dir{>},         %
  "b";"c"**\dir{-}?>*\dir{>}          %
 \end{xy}                             %
} }
\newcommand{\graphpm}[3]{ \ensuremath{
 \begin{xy}                           
  (0,-2)*+UR{\scriptstyle #1}="a",    
  (3,3)*+UR{\scriptstyle #2}="b",     
  (6,-2)*+UR{\scriptstyle #3}="c",    
  "a";"b"**\dir{-}?>*\dir{>},         %
  "c";"b"**\dir{-}?>*\dir{>}          %
 \end{xy}                             %
} }
\newcommand{\longgraph}[4]{ \ensuremath{
 \begin{xy}                           
  (0,-2)*+UR{\scriptstyle #1}="a",    
  (4,3)*+UR{\scriptstyle #2}="b",     
  (8,-2)*+UR{\scriptstyle #3}="c",    
  (12,3)*+UR{\scriptstyle #4}="d",    %
  "a";"b"**\dir{-}?>*\dir{>},         %
  "b";"c"**\dir{-}?>*\dir{>},         %
  "c";"d"**\dir{-}?>*\dir{>}          %
 \end{xy}                             %
} }
\title{Linking of letters and the lower central series of free groups}
\newtheorem{theorem}{Theorem}[section]
\newtheorem{corollary}[theorem]{Corollary}
\newtheorem{lemma}[theorem]{Lemma}
\newtheorem{proposition}[theorem]{Proposition}
\theoremstyle{definition}
\newtheorem{definition}[theorem]{Definition}
\newtheorem{example}[theorem]{Example}
\definecolor{darkgreen}{rgb}{0,0.5,0}
\newcommand{\sign}{\text{sign}}
\newcommand{\sym}{\mathcal{S}}
\newcommand{\R}{{\mathbb R}}
\newcommand{\Z}{{\mathbb Z}}
\newcommand{\Q}{{\mathbb Q}}
\newcommand{\E}{{\mathbb E}}
\newcommand{\fL}{{\mathbb L}}
\newcommand{\si}{{\mathcal S}}
\newcommand{\gs}{{\mathcal SG}}
\newcommand{\sy}{{\mathcal Symb}}
\newcommand{\Lie}{\mathcal{L}\!{\mathit ie\/}}    
\newcommand{\Eil}{\mathcal{E}\!{\mathit il\/}}    
\newcommand{\tr}{\mathcal{T}\!{\mathit r\/}}
\newcommand{\gr}{\mathcal{G}\!{\mathit r\/}}      
\newcommand\Item[1][]{%
  \ifx\relax#1\relax  \item \else \item[#1] \fi
  \abovedisplayskip=0pt\abovedisplayshortskip=0pt~\vspace*{-\baselineskip}}
\begin{document}

\author[J. Monroe]{Jeff Monroe}
\address{Mathematics Department,  University of Oregon}
\email{jmonroe@uoregon.edu}
\author[D. Sinha]{Dev Sinha}
\address{Mathematics Department,  University of Oregon}
\email{dps@uoregon.edu}

\begin{abstract}
We develop invariants of the lower central series of free groups through linking of letters,
showing they span 
the rational linear dual of the lower central series subquotients.
We build on an approach to Lie coalgebras through operads, setting the stage for generalization to the lower central series Lie algebra
of any group.  Our approach yields a new co-basis for free Lie algebras.  
We compare with the classical approach through Fox derivatives.  

\end{abstract}

\subjclass{20E05,  20F14, 55Q25}
\maketitle


\section{Introduction}

Linking is the derived version of intersection, defined through cobounding and then intersecting.  The simplest form
of linking is defined whenever two manifolds which cobound have total dimension one less than the ambient dimension.
Figure~\ref{introfigure} on the right shows a classical picture of linking, of three one-dimensional manifolds, labeled $a$, $b$ and $c$, in $\R^3$.
While this is the first example of linking in the typical order of  learning topology,
 the first example when ordering by dimension is that of zero-manifolds inside a one-manifold,
as illustrated on the left of Figure~\ref{introfigure}. 
(The second example would be a zero-manifold and a one-manifold in a two-manifold, so the usual linking would be the third case.)

\begin{figure}[h]
\includegraphics[width=5cm]{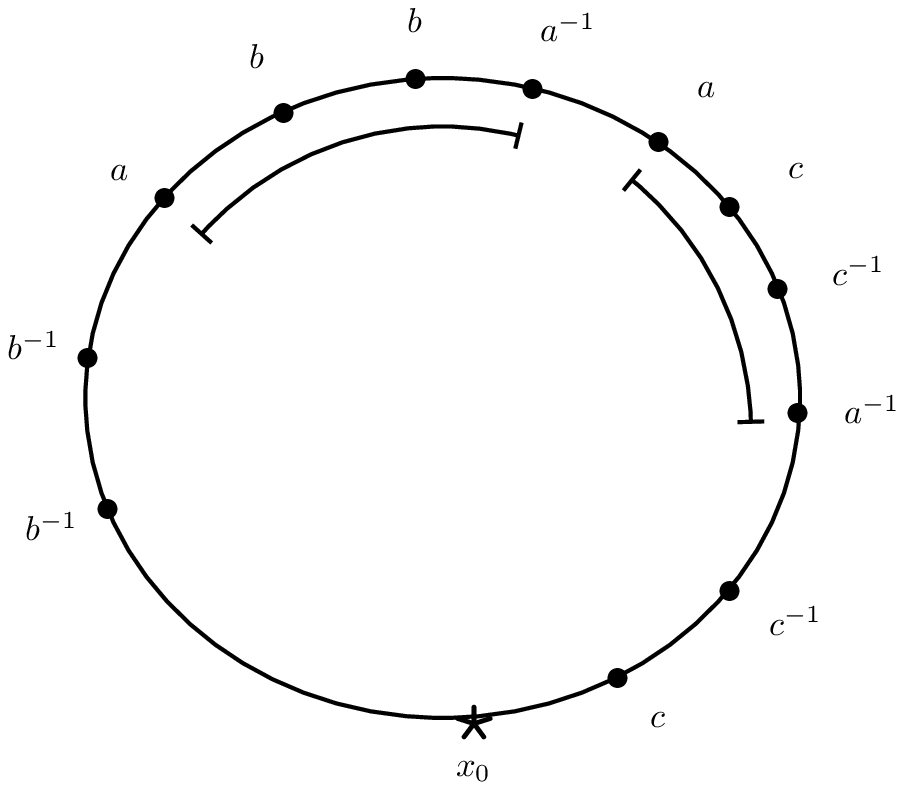}
\hspace{2.3 cm}
\includegraphics[width=5cm]{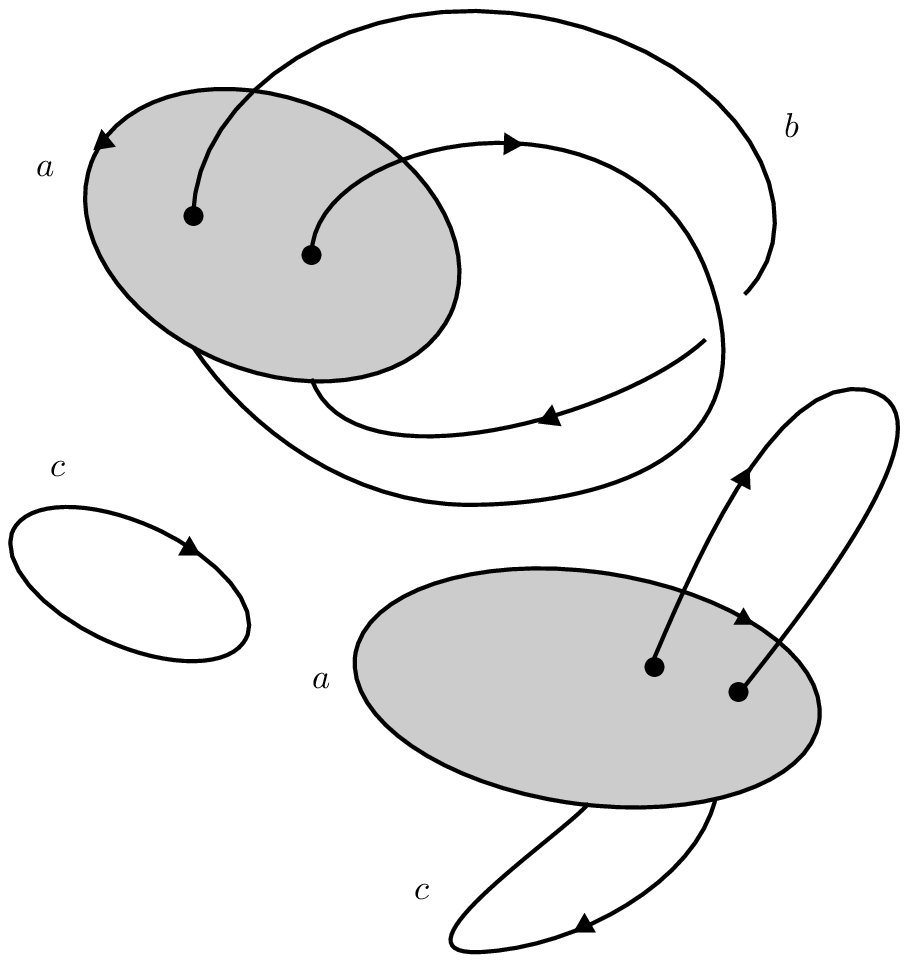} 

\caption{Classical linking is on the right ; linking of letters on the  left.}
\label{introfigure}
\end{figure}

 In both the left and right depicted cases in Figure~\ref{introfigure}, the distinct submanifolds are labeled and oriented, 
and linking number is calculated by cobounding the $a$ manifold and finding  two intersections with the $b$ manifold and two intersections of the $c$ manifold.  In both cases,  the $b$ intersections are both
positive, while the $c$ intersections have opposite signs.
 
 As also illustrated on the left,
a labeled, oriented zero-dimensional submanifold of a connected, oriented one-manifold with basepoint
corresponds to a word in the labels.   Under this correspondence,
 choosing a cobounding disk is equivalent to choosing 
pairs of occurrences  of a letter and its inverse and considering the letters between them.  The linking number counts instances of other letters in between.  We call such counts linking or 
interleaving numbers of letters, the subject we initiate in this paper.


Linking, and its constituent processes of cobounding  and intersecting, are staples in topology.
The process of cobounding and intersecting can be expanded and iterated to obtain higher linking numbers. 
Sinha and Walter show in \cite{SiWa13} that higher linking numbers ``with correction terms'' can be used to distinguish rational homotopy groups of a 
simply connected space.
The analogous algebraic generalization, obtained by considering the fundamental group of a space,
is  straightforward to conceptualize, at least in the free group setting.  
For example, the word $a b a^{-1} b^{-1}$  is zero in
the abelianization, and visibly is in the first commutator subgroup.  It has linking number one, as there is a single $b$ ``caught between'' an $a$-$a^{-1}$ pair.
We will see that this linking number obstructs its being in the second commutator subgroup.  In a further example, all of the two-letter
linking invariants of $[[a,b],c] = a b a^{-1} b^{-1} c b a b^{-1} a^{-1} c^{-1}$ vanish.  But if we then consider occurrences of $c$ between both an 
$a$-$a^{-1}$ pair and a $b$-$b^{-1}$ pair, that count is non-zero. We will see that this count,  which is  
modeled on a way to distinguish maps from a four-sphere
to a wedge sum of three two-spheres, obstructs this word from being a three-fold commutator.

Our main results are to define purely algebraic linking and higher linking numbers between letters of words, 
and show that they perfectly reflect the lower central series filtration of free groups, spanning the rational linear duals of their subquotients.   While
we could argue geometrically, using zero- and one-dimensional manifolds as in the discussion above, we take a purely 
algebraic and combinatorial approach.  
The subquotients of the lower central series for free groups constitute free Lie algebras, whose bases and linear duals have been
studied  extensively \cite{Reut93, SiWa11, MeRe96, BoCh06, Chib06, Walt20}.  
We lift the definition of functionals from the free Lie algebra subquotients to the free groups themselves, as understanding the
equivalence class in the subquotient is the goal rather than the starting point.  
Such functionals at the group level were given first implicitly by Magnus \cite{Magn37} and then explicitly through the
free differential calculus by Chen, Fox and Lyndon \cite{CFL58}.  We compare our approach to those, showing that it is  more efficient 
in examples,  is more closely related to the Quillen models for rational homotopy theory, and that it presents a
a new basis for the cofree Lie coalgebra.

We conjecture similar results for the linear dual to the rational lower central series Lie algebra of any 
finitely presented group.  We expect such results to be useful in a wide range of settings, including in topology in the presence
of the fundamental group.  The first author is pursuing such an application to the Johnson filtration of mapping class groups.
Together we are also pursuing an ``Ouroboros-like'' application to knot and link invariants defined 
through the fundamental group, in particular Milnor invariants
\cite{Mil54}.


\tableofcontents

\section{First development}

 \subsection{Basic definitions}

Our definitions are modeled on linking numbers of $S^0$ in $S^1$, with the $S^0$'s corresponding
to pairs of letters in a word and the cobounding intervals corresponding to sets of consecutive letters.  The objects 
being linked and the cobounding objects can be defined either through subsets of a word or through functions on the letters
of a word.  Both descriptions are useful,  more so together.  

As we introduce a fair number of basic definitions, some readers may want to refer to examples in Section~\ref{examples} as they 
go along, or even try to understand that approach at a conceptual level before reading the formal treatment here. 

 \begin{definition}
 Let $F_n$ denote the free group on $n$ generators.
 
 A {\bf word} of {\bf length} $k$ is an element of the $k$th Cartesian power of the set of generators and their inverses, for some $k \geq 0$.  Each entry of
 this Cartesian product is called a {\bf letter} of the word.
 \end{definition}
 
We generally let $w=x_1\cdots x_k$ denote a word of length $k$, which represents an element of $F_n$ in the standard way.

\begin{definition}
 A signed {\bf Linking Invariant Set for Tallying} - in short, a  ``{\bf list}'' - 
taken or drawn from a word $w$ is a set, possibly empty and possibly with repetition, of pairs
\[
L = \{ (\ell_1,\epsilon_1),\ldots, (\ell_m,\epsilon_m) \}
\]
where each $\ell_i$ is some $x_j$ and each extrinsic sign $\epsilon_i$ is $\pm1$.  We call such pairs {\bf signed letters}.
\end{definition}

See Example~\ref{ex1} below.  
For any word, there is a tautological list taken from it, namely
the one in which each letter occurs once with an extrinsic sign of $+1$.

For many purposes, it is helpful to understand such lists through  functions on the word.

\begin{definition} 
Given a list $L=\{(\ell_1, \epsilon_1), \ldots, (\ell_m, \epsilon_m)\}$, 
the {\bf associated function $f_L$} from the ordered set $w = x_1, \ldots, x_k$ to the integers sends each $x_j  $ to 
the sum of the $\epsilon_i$ associated to its occurrences  as an $\ell_i$ in  $L$.  
Two lists are {\bf simply equivalent} if their associated functions are equal.
\end{definition}

Lists up to simple equivalence thus form a group, where addition is union and inverse is reversing all extrinsic signs.
Two lists are simply equivalent if and only they are related by a sequence of additions or removals of  pairs $\{(x_i, 1), (x_i, -1)\}$.

In our applications, lists  will almost always be homogeneous, consisting of occurrences of a single generator and its inverse.  In such cases
we sometimes  incorporate the generator which occurs in the name of the list, for example letting $L_a$ and $L'_a$ be lists
of the letter $a$.

\begin{definition}
A list is {\bf homogeneous} if every letter is an occurrence of the same generator or its inverse.

The {\bf standard list} $\Lambda_a$ of a generator $a$ in $w$  is formed 
by having each $x_i=a^{\pm 1}$ in $w$ appear in the list one and only one time, with extrinsic sign $+1$.
\end{definition} 

 The associated function 
for this standard list is the indicator function for the subset of occurrences of $a$ and $a^{-1}$.


As the acronym implies, we can tally or count a list.

\begin{definition}
Let $a$ be a generator, and $x$ either a generator or the inverse of a generator.  The {\bf intrinsic sign} with respect to $a$ is defined by

\begin{equation*}
 \sign^a(x)=
\begin{cases}
1 & \text{ if } x=a \\
-1 & \text{ if }x=a^{-1}\\
0 & \text{ else }. \\
\end{cases} 
\end{equation*}

The {\bf total sign} of  a signed letter with respect to a generator $a$, which by  we call $s_a(\ell_i)$ (or by abuse $s(\ell_i)$ when $a$ is understood), 
is $\epsilon_i \cdot  \sign^a(\ell_i)$.

The {\bf $a$-count} $\phi_a$ of a list $L =\{(\ell_1,\epsilon_1),\ldots, (\ell_m, \epsilon_m)\}$ is given by $$\phi_a(L)= \sum_{i=1}^m s(\ell_i).$$  
\end{definition}

\begin{example}\label{ex1}
 For $w=aaba^{-1}b^{-1}$, two  lists we can form are  $L_a=\{(a_1,1),(a_1,-1), (a^{-1},-1)\}$ and 
$L_a'=\{(a_1, 1),(a_2,1),(a^{-1},1)\}$.  Here since there are multiple occurrences of the letter $a$, we distinguish them by subscripts
which reflect their order of occurrence.
The first list is simply equivalent to $\{ (a^{-1}, -1) \}$, while the second list is $\Lambda_a$. 
We have that $\phi_a(L_a)=1$ and $\phi_a(L_a')=1$. 
\end{example}

If the letter which we are counting is understood in context -- in particular, if a list is homogeneous -- 
then we omit the letter and simply use $\phi$ for the appropriate counting function.

 The counts  for the tautological list associated to a word  with respect to all generators determine its image in the abelianization
 of the free group.  
 When these vanish, a word
represents an element of the commutator subgroup.  We now define derived counts in the commutator subgroup.  To do so, 
we first define cobounding.

\begin{definition}
An {\bf interval} $I$  in a word $w$ is a nonempty set of consecutive letters.  Such is determined by its {\bf first and last letters}, 
denoted $\partial_0 I$ and $\partial_1 I$.

An {\bf oriented interval} is an interval whose endpoints are signed letters with opposite total signs.
We let  the {\bf boundary} 
$\partial I = \{ (\partial_0 I, \sigma_0), 
(\partial_1 I, \sigma_1) \}$ 
where $\sigma_0$ and $\sigma_1$ are the extrinsic signs of $\partial_0 I$ and $\partial_1 I$ respectively.
We let $\epsilon^I_{0}$ be the total sign of  $\partial_0 I$, and similarly for $\epsilon^I_1$.
The {\bf orientation} of $I$, denoted ${\rm or} \; I$,  is defined to be $\epsilon^I_0$. 

An oriented interval determines an {\bf associated function} $f_I$ from the   set $x_1, \ldots, x_k$ to the integers whose value is $0$
except on the consecutive set of letters defining $I$ , and whose value on each letter in this set of consecutive 
letters  is the total sign of the initial letter.
\end{definition}


In Section~\ref{examples}, we will give a diagrammatic approach to doing these counts by hand.  
In that framework, the orientation of an interval 
proceeds from the positively signed endpoint to the negatively signed endpoint.

\begin{definition}
Let $L=\{(\ell_1,\epsilon_1),\ldots, (\ell_p,\epsilon_p)\}$ be a nonempty homogeneous list with 
$\phi(L)=0$.  

Define a {\bf cobounding} $d^{-1}L$ to be a  set of oriented intervals $\{ I_k \}$ such that each 
$(\ell_i, \epsilon_i)$ occurs exactly
once as either $\partial_0$ or $\partial_1$ of some $I_k$. 
\end{definition}

One can use the  ordering on the letters of a word to define a canonical cobounding, but we will make use of the flexibility in such choices.
Cobounding seems awkward to define through functions.

We now define linking of letters.  Since linking is intersection with a choice of cobounding, the path forward is clear.

\begin{definition}\label{intersection}
Let $w$ be a word, $L_a$ a list with $\phi (L_a) = 0$,  and  let  
$d^{-1}L_a =  \{ I_k \}$  be a choice of a cobounding.  Let $L_b  = \{ (y_1, \epsilon_1),\ldots, (y_p,\epsilon_p) \}$ be a list  with $b\neq a$.  

Let $(x_i, \epsilon)$ be a signed generator and
first define the {\bf signed intersection} $(x_i, \epsilon) \cap I_k$ to be either $(x_i,   \epsilon_0^I \cdot \epsilon)$ if $x_i \in I_k$  or empty if $x_i \notin I_k$.  
Define the {\bf linking list}  $d^{-1}L_a \wedge L_b$, which is also by definition $L_b \wedge d^{-1} L_a$, as the union of all 
$(y_i,\epsilon) \cap I_k $, as $(y_i,\epsilon)$ varies over the elements of $L_b$ and $I_k$ varies over the intervals in $d^{-1}L_a$.

\end{definition}

We will shortly provide an illustration, in Example~\ref{graphical}.

The simple equivalence class of the list $d^{-1}L_a \wedge L_b$, is that whose associated function is
 $\sum_{I \in d^{-1} L_a} f_{L_b} \cdot f_I$.  
The list $d^{-1}L_a \wedge L_b$ witnesses the linking of the lists $L_a$ and $L_b$.
As $d^{-1}L_a \wedge L_b$ is itself a list, of generator $b$, we can count it.  We can then 
repeat the process whenever the count of a list which 
is produced vanishes.  Recall the standard list $\Lambda_a(w)$.  Then for example if $d^{-1} \Lambda_a \wedge \Lambda_b$ is defined and
 has vanishing count,  we can then define $d^{-1} ( d^{-1} \Lambda_a \wedge \Lambda_b ) \wedge \Lambda_\ell$, where $\ell$
could be any generator other than $b$.

\begin{definition}\label{levellist}
An {\bf iterated linking list} is one obtained from the lists $\Lambda_\ell$ through the operations of (choice of) cobounding and intersection, when defined.

Define the {\bf depth} of $\Lambda_a$ to be zero.
Inductively, if $L_a$ is an iterated linking list of depth $i$ of $w$ with $\phi (L_a) = 0$ and $L_b$ is of depth $j$ with $b \neq a$ then we
define the depth of a list $d^{-1}L_a \wedge L_b$ to be $i + j + 1$.
\end{definition}

We will count these iterated linking lists, informally calling the results the  ``letter-linking'' or ``interleaving'' 
numbers for the word $w$.  While cobounding involves a choice,
one of our first results is that such choices, at any stage, do not change resulting counts.  But before  establishing such needed results,
we share an example.

\subsection{An example, through a visual algorithm}\label{examples}

The geometric inspiration for our letter-linking numbers gives rise to a visual algorithm for computing them  by hand.  
First we  describe  geometric representations for the definitions above, and then we will give an illustrative example.

 Let $w$ be a word  with lists $L_a$ and  $L_b$ taken from it, with  $\Phi_a(L_a)=0$.  
 We construct a choice of $d^{-1}L_a \wedge L_b$ through a diagram, starting with $w$ and decorating it in the following  steps:

\begin{enumerate}
\item For each $(a, \epsilon)$ in $L_a$,  place a $+$ sign above the corresponding $a$ in $w$ if $\epsilon=1$ and a minus sign if $\epsilon=-1$.
Alternatively, just list total multiplicities over the letters.

\item Each interval chosen for $d^{-1}L_a$  corresponds to a choice of elements with opposite total sign.  For each, 
draw an arrow originating under the letter with positive total sign and 
ending under the element with negative total sign.

\item As in the first step, decorate each $b$ in $w$ with  its multiplicity in $L_b$, say $m$ which is the positive occurrences minus the negative occurrences of this $b$ in the list.

\item  For each $b$ consider the arrows which ``pass through'' it.  
If there are $p$ arrows heading left to right and $q$ arrows heading from right to left which cross that occurrence of $b$, 
then replace its multiplicity by $m(p-q)$.  In particular, if there are no arrows  underneath the letter replace its multiplicity by $0$.

\end{enumerate}

These multiplicities over all $b \in L_b$ are the associated function for $d^{-1}L_a \wedge L_b$.

\bigskip

\begin{example}\label{graphical}
 Consider   ${d^{-1}(d^{-1}\Lambda_a\wedge\Lambda_b)\wedge \Lambda_a}(w)$ for $w=[aa,[b,ac]]$.  
 We first verify by inspection that all letter-linking numbers of depth one vanish.  
Then we diagram $d^{-1}\Lambda_a \wedge \Lambda_b$ as
\begin{center}
\includegraphics{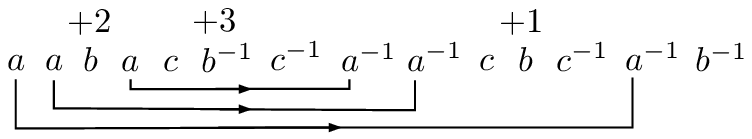}.
\end{center}
We find that $L_b=d^{-1}\Lambda_a \wedge \Lambda_b$  has two occurrences of the first $b$, three of the first $b^{-1}$ and 
one occurrence of the second $b$.

We can then diagram $d^{-1}L_b \wedge \Lambda_a$ as follows,

\begin{center}
\includegraphics{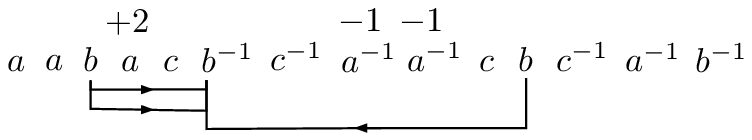}.
\end{center}
We deduce $\phi_a ({d^{-1}(d^{-1}\Lambda_a\wedge\Lambda_b)\wedge \Lambda_a}(w))  =2\cdot \sign(a)-1\cdot\sign(a^{-1})-1\cdot \sign(a^{-1})=4$.   
While $w$ is visibly a two-fold commutator, we will see that the count just made obstructs $w$ being a three-fold commutator.

\end{example}



\subsection{Independence from choices}\label{independence}

\begin{definition}
The {\bf provisional symbol} $\sigma$ of a depth-$i$ list is its expression as an iterated application of $d^{-1}$ and $\wedge$ to lists $\Lambda_\ell$.  

 Define the letter-linking function  $\Phi_\sigma(w)$ to be  $\phi ( L)$, 
where $L$ is a choice of list with symbol $\sigma$, when such a list exists.
\end{definition}


The main result of this paper is  that the letter-linking functions $\Phi_\sigma$ 
determine the representative of a word in the lower central series
Lie algebra of a free group.
In the next sections we will develop relations between these functions,  connect them to this lower central 
series filtration, and prove this main result.  
But  we must first prove that these are in fact well-defined functions, which we do now in steps.
We will show that the choices in cobounding and in representative of an element of the free group
result in simply equivalent lists, which agree not only in their counts but in all of their derived counts.
This proof, and others below, rely on the geometry of intervals.

\begin{definition}
We say two intervals in a word are 
 {\bf disjoint} if they have no letters in common. We say they are  
 {\bf contained}  if one is contained in the other.
 Otherwise,
we say they are {\bf interleaved}.

Define an {\bf exchange} of intervals to be replacing two intervals in a cobounding with two different intervals with the same four boundary points.  
\end{definition}

\begin{proposition}\label{indepdinv}

 Let $w$ be a  word, and let $L_a$ and $L_b$ be lists taken from 
 $w$ with $\phi(L_a) = 0$.
 Then  all choices of $d^{-1}L_a \wedge L_b$  are simply equivalent.  
 
 \end{proposition}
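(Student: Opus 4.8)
The plan is to write down a closed form for the associated function of $d^{-1}L_a\wedge L_b$ in which the chosen cobounding does not appear. By the formula recorded just after Definition~\ref{intersection}, this associated function sends each position $j$ of $w$ to
\[
\sum_{I\in d^{-1}L_a} f_{L_b}(x_j)\,f_I(x_j)=f_{L_b}(x_j)\cdot g(j),\qquad g(j):=\sum_{I\ni j}{\rm or}\,I,
\]
where $I\ni j$ means position $j$ lies in the block of consecutive letters defining $I$, and I have used $f_I(x_j)={\rm or}\,I$ there. Since $f_{L_b}$ depends only on $L_b$, and since $f_{L_b}(x_j)=0$ unless $x_j$ is an occurrence of $b$, it suffices to prove that $g(j)$ is independent of the cobounding at every position $j$ which is an occurrence of $b$. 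The feature that makes this tractable is that, because $b\neq a$, such a position $j$ is never an endpoint of an interval in $d^{-1}L_a$.

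First I would record the ``charge'' carried by $L_a$ at each position: set $t(j)=\sign^a(x_j)\,f_{L_a}(x_j)$, the total sign of the occurrences of $x_j$ inside $L_a$, and note that $t$ is intrinsic to $L_a$ and that $\sum_j t(j)=\phi(L_a)=0$. Next I would pass from ``containing a letter'' to ``crossing a gap'': let $\nu(j)$ be the orientation-weighted count of intervals of the cobounding whose first letter sits at position $\le j$ and whose last letter sits at position $>j$, i.e.\ the intervals crossing the gap just after the $j$-th letter. Sliding this gap past a single position $j$ adds the intervals whose initial endpoint $\partial_0$ is at $j$ and drops those whose terminal endpoint $\partial_1$ is at $j$. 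Using ${\rm or}\,I=\epsilon^I_0$ together with the defining property of an oriented interval that $\epsilon^I_0=-\epsilon^I_1$, each such endpoint contributes exactly its own total sign to the change, and since the cobounding condition makes every signed letter of $L_a$ an endpoint of exactly one interval, one obtains the telescoping identity
\[
\nu(j)-\nu(j-1)=t(j).
\]
With $\nu(0)=0$ this yields the cobounding-free closed form $\nu(j)=\sum_{j'\le j}t(j')$.

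It then remains to identify $g(j)$ with $\nu(j)$ at the relevant positions. When $j$ is not an endpoint of any interval — in particular when $x_j$ is an occurrence of $b$ — an interval contains $j$ if and only if it crosses the gap just after $j$, so $g(j)=\nu(j)$. Hence the associated function of $d^{-1}L_a\wedge L_b$ is
\[
x_j\longmapsto f_{L_b}(x_j)\cdot\sum_{j'\le j}\sign^a(x_{j'})\,f_{L_a}(x_{j'}),
\]
which involves only $L_a$ and $L_b$. Two coboundings therefore produce equal associated functions, that is, simply equivalent lists.

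The one real subtlety, and the step I would write out most carefully, is the sign bookkeeping in the telescoping identity: one must verify that a left endpoint and a right endpoint each contribute $+t(j)$ to $\nu(j)-\nu(j-1)$ despite carrying opposite orientations, and one must correctly aggregate positions at which several signed letters of $L_a$ coincide. An alternative route, in the spirit of the exchange operation just defined, would be to show that any two coboundings of $L_a$ are connected by a finite sequence of exchanges and to check that a single exchange — which alters only two intervals among four fixed endpoints, all of them $a$-positions where $f_{L_b}$ vanishes — leaves $\sum_I f_{L_b}\cdot f_I$ unchanged; the flux computation above is exactly what certifies that each exchange preserves $g$ away from those endpoints.
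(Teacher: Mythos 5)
Your proof is correct, and it takes a genuinely different route from the paper's. The paper argues locally: it asserts that any two coboundings differ by a finite sequence of exchanges, and then checks (in detail for the disjoint-versus-interleaved case, with the other cases left as ``similar'') that a single exchange alters $d^{-1}L_a \wedge L_b$ only by cancelling pairs $(b,+1),(b,-1)$. You instead produce a global, cobounding-free closed form: writing the associated function as $f_{L_b}(x_j)\,g(j)$ and identifying $g(j)$, at every position where $f_{L_b}$ can be nonzero, with the partial sum $\nu(j)=\sum_{j'\le j}\sign^a(x_{j'})\,f_{L_a}(x_{j'})$ via the telescoping identity $\nu(j)-\nu(j-1)=t(j)$. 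Your sign bookkeeping checks out: an initial endpoint at position $j$ contributes $\mathrm{or}\,I=\epsilon^I_0=s(\ell_i)$ to the difference, a terminal endpoint contributes $-\,\mathrm{or}\,I=\epsilon^I_1=s(\ell_i)$ (using $\epsilon^I_1=-\epsilon^I_0$), single-letter intervals contribute zero to both sides, and the hypothesis $b\neq a$ (together with homogeneity of the lists, which the paper's subscript convention assumes) guarantees that positions carrying $L_b$ are never interval endpoints, so that ``contains $j$'' and ``crosses the gap after $j$'' agree exactly where it matters. What your approach buys: it sidesteps the paper's unproved combinatorial claim that exchanges connect any two coboundings, and it proves something stronger --- an explicit formula for the simple-equivalence class of $d^{-1}L_a\wedge L_b$ as a discrete antiderivative of the $L_a$-charge multiplied by $f_{L_b}$, from which Corollary~\ref{simplyequiv} also follows essentially by inspection. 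What the paper's approach buys: the exchange/interleaving technique it introduces here is the one that gets reused and sharpened later (notably in the cycle analysis of Lemma~\ref{combolemma}, Lemma~\ref{threelistlemma}, and Theorem~\ref{graph-symbol}), where a closed-form formula of your type is no longer available, so rehearsing it on this easier statement earns its keep in the rest of the paper.
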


\begin{proof} 

Any two 
coboundings differ by a sequence of exchanges, so we analyze a single exchange.
We claim that after performing an exchange in $d^{-1} L_a$, intersecting with $L_b$ yields 
a list which can only differ by  including or omitting  pairs of the form $(\ell,-1)$ and $(\ell, 1)$.

An exchange can occur between any two types of intervals.  We focus on the case of exchanging between disjoint and interleaved.  Similar
arguments establish the other cases.

Consulting   Figure~\ref{Figure 2}, let $v,x,y,z$ be the letters in the word $w$, in the order in which they occur, which are the endpoints of the exchanged intervals.
 If there is an exchange between disjoint and interleaved coboundings of $v, x, y$ and $z$ then $x$ and $y$ must have the same
total sign.  Thus the two intervals in each matching have opposite orientations, and the leftmost intervals in the disjoint and interleaved coboundings
have the same orientation, as do the
rightmost.

\begin{figure}[h]
\includegraphics[width=6cm]{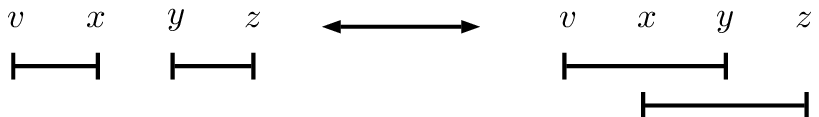}
\caption{An  exchange between disjoint and interleaved intervals.}
\label{Figure 2}
\end{figure}

In this case, occurrences of $b$ between $v$ and $x$ 
and those between $y$ and $z$ are added once to the list $d^{-1}L_a \wedge L_b$ for both choices of cobounding, with the same signs.  
The occurrences of $b$ between $x$ and
$y$ do not get added at all for the disjoint cobounding, and in pairs with opposite signs for the interleaved cobounding.  Thus the lists differ by  pairs 
$(b, 1)$, $(b,-1)$, and so are simply equivalent.
\end{proof}

\begin{corollary}\label{simplyequiv}
 If  $L_a$ and $L_a'$ are simply equivalent, as are $L_b$ and $L_b'$, then so are $d^{-1} L_a \wedge L_b$ and $d^{-1} L_a' \wedge L_b'$,
for any choice of coboundings.  
\end{corollary}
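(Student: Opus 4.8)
The plan is to work entirely at the level of associated functions, using that two lists are simply equivalent precisely when their associated functions agree. Recall from just before the statement that a choice of cobounding $d^{-1}L_a$ gives $d^{-1}L_a\wedge L_b$ the associated function $\sum_{I\in d^{-1}L_a} f_{L_b}\cdot f_I$. Writing $g=\sum_{I\in d^{-1}L_a} f_I$ for the signed covering function of the chosen cobounding, this associated function is the pointwise product $f_{L_b}\cdot g$. The dependence on $L_b$ is then immediate: the formula sees $L_b$ only through $f_{L_b}$, so if $f_{L_b}=f_{L_b'}$ the products $f_{L_b}\cdot g$ and $f_{L_b'}\cdot g$ coincide. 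Thus all of the content lies in controlling $g$, i.e.\ the dependence on $L_a$ and on the cobounding.

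The key reduction is that $f_{L_b}$ is supported only on the occurrences of $b^{\pm1}$, since $L_b$ is homogeneous of type $b\neq a$. Hence it suffices to understand $g(x_j)$ at positions $x_j$ that are \emph{not} occurrences of $a^{\pm1}$. I would prove the sharper statement that at every such position $g(x_j)$ equals the partial sum $P(j)=\sum_{i\le j} t(x_i)$ of the total-sign function $t(x_i)=\sign^a(x_i)\cdot f_{L_a}(x_i)$ of $L_a$ — a quantity manifestly independent of the cobounding and depending on $L_a$ only through $f_{L_a}$.

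This sharper statement is a short telescoping computation, and is where the real work lies. Each oriented interval $I=[p_I,q_I]$ contributes its orientation $s_I$ to $f_I$ on the closed range $p_I\le \,\cdot\, \le q_I$, so $g(x_j)=\sum_{p_I\le j\le q_I} s_I$. On the other hand each $t(x_i)$ splits as the orientations of intervals with a left endpoint at $i$ minus those with a right endpoint at $i$ (a right endpoint carries total sign $-s_I$), so summing and telescoping gives $P(j)=\sum_{p_I\le j<q_I}s_I$; the two differ by $\sum_{q_I=j}s_I$. The decisive observation is that every interval endpoint $p_I,q_I$ is an occurrence of $a^{\pm1}$, because the boundary signed letters of a cobounding of $L_a$ are exactly the elements of the homogeneous list $L_a$. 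Consequently, at a non-$a$ position no interval begins or ends, the discrepancy $\sum_{q_I=j}s_I$ vanishes, and $g(x_j)=P(j)$ there. The main obstacle is purely this bookkeeping — tracking that intervals are closed, and that the entire discrepancy between $g$ and the partial sum is concentrated at the $a$-positions — but once it is pinned down the conclusion is forced.

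Assembling: since $f_{L_a}=f_{L_a'}$ we get equal total-sign functions, hence equal partial sums $P=P'$, hence $g(x_j)=P(j)=P'(j)=g'(x_j)$ at every occurrence of $b^{\pm1}$, for the two coboundings of $L_a$ and $L_a'$; and $f_{L_b}=f_{L_b'}$ is supported only at those positions. Therefore $f_{L_b}\cdot g$ and $f_{L_b'}\cdot g'$ agree as functions on $w$, which is exactly simple equivalence of $d^{-1}L_a\wedge L_b$ and $d^{-1}L_a'\wedge L_b'$. Note that this argument already recovers the cobounding-independence of Proposition~\ref{indepdinv}; alternatively one may cite that proposition to fix convenient coboundings and then check only the insertion of a single cancelling pair $\{(x,1),(x,-1)\}$ into $L_a$, choosing for $L_a'$ the cobounding that pairs these two signed letters into the one-letter interval $\{x\}$, which meets no occurrence of $b$ and so contributes nothing.
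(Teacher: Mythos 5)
Your proof is correct, but it takes a genuinely different route from the paper's. The paper reduces to the case where $L_a'$ has one extra cancelling pair $\{(x,1),(x,-1)\}$, extends a cobounding of $L_a$ to one of $L_a'$ by adding the degenerate one-letter interval spanning that pair (which no occurrence of $b$ can meet), and then invokes Proposition~\ref{indepdinv} to pass to arbitrary coboundings --- essentially the observation you relegate to your final sentence. Your main argument instead works entirely with associated functions: you show that the covering function $g=\sum_{I\in d^{-1}L_a}f_I$ of \emph{any} cobounding agrees, at every position that is not an occurrence of $a^{\pm1}$, with the partial sum $P(j)=\sum_{i\le j}\sign^a(x_i)f_{L_a}(x_i)$, because the discrepancy $g(x_j)-P(j)=\sum_{q_I=j}s_I$ is supported on right endpoints of intervals, and all endpoints of a cobounding of a homogeneous list lie at $a^{\pm1}$ positions (your telescoping identity checks out, including for degenerate one-letter intervals, where the contributions to $t$ cancel while the discrepancy is absorbed at an $a$-position). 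Since $f_{L_b}$ is supported away from those positions, the associated function of the linking list is $f_{L_b}\cdot P$, manifestly depending only on $(f_{L_a},f_{L_b})$. What this buys: a single closed formula that simultaneously reproves Proposition~\ref{indepdinv} (no case analysis of exchanges between disjoint, contained, and interleaved intervals) and gives the corollary, with the dependence only on simple equivalence classes made explicit rather than inductive. What the paper's route buys: given that Proposition~\ref{indepdinv} is already established, its proof is three lines, and it stays in the geometric language of intervals and exchanges that is reused later (e.g.\ in Theorem~\ref{well-defined} and Lemma~\ref{threelistlemma}). One small caveat: your argument, like the paper's, implicitly uses that the boundary signed letters of the cobounding intervals are exactly the elements of $L_a$; this is the intended reading of the cobounding definition, but it is worth stating since your endpoint-localization step depends on it.
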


\begin{proof} It suffices to consider $L_a$ and $L_a'$ which differ by one cancelling pair, say with $L_a'$ having the additional pair.
A  choice  $d^{-1}L_a$ of cobounding for $L_a$ can be extended  
to one for $d^{-1}L_a'$,  by adding the cancelling pair as an interval in the set.  But no other letter including $b$ can intersect this interval, 
so with these choices the resulting lists are the same.  That $d^{-1} L_a \wedge L_b$ and $d^{-1} L_a \wedge L_b'$ differ
by cancelling pairs when $L_b$ and $L_b'$ do is immediate.  
Applying Proposition~\ref{indepdinv}, the resulting lists will be simply equivalent for any choices
of coboundings.
\end{proof}

We can now prove well-definedness of these letter interleaving numbers.

\begin{theorem}\label{well-defined}
The function $\Phi_\sigma$ is independent of choice of list with symbol $\sigma$ and is independent of word representative of group element.
\end{theorem}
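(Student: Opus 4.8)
The statement to prove, \refT{well-defined}, has two independent parts: invariance under choice of list representing a symbol $\sigma$, and invariance under the choice of word representing a fixed element of $F_n$. The plan is to handle these separately, since they involve genuinely different mechanisms.

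The first part---independence of the choice of list with a given symbol $\sigma$---I would prove by induction on the depth $i$ of $\sigma$. The base case $i=0$ is the standard list $\Lambda_\ell$, which is uniquely determined, so there is nothing to check. For the inductive step, a depth-$i$ symbol has the form $\sigma = (d^{-1}\sigma_a \wedge \sigma_b)$ where $\sigma_a$ has depth less than $i$, $\sigma_b$ has depth less than $i$, and the generators differ. Any two lists realizing $\sigma$ are built by choosing lists $L_a, L_a'$ realizing $\sigma_a$ and $L_b, L_b'$ realizing $\sigma_b$, then choosing coboundings and intersecting. By the inductive hypothesis, $L_a$ and $L_a'$ are simply equivalent (they have equal associated functions, hence equal counts), as are $L_b$ and $L_b'$. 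Now \refC{simplyequiv} applies directly: simply equivalent inputs yield simply equivalent outputs $d^{-1}L_a \wedge L_b$ and $d^{-1}L_a' \wedge L_b'$ for \emph{any} choices of cobounding. Since simply equivalent lists have equal counts, $\Phi_\sigma(w)$ is well-defined on this first axis. This part should be routine once the induction is set up correctly; the real content is already packaged in \refP{indepdinv} and \refC{simplyequiv}.

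The second part---independence of the word representative of a group element---is where I expect the main obstacle to lie. Two words represent the same element of $F_n$ precisely when they differ by a sequence of insertions and deletions of adjacent cancelling pairs $x x^{-1}$. So it suffices, just as in the proofs above, to analyze a single insertion of a cancelling pair $x x^{-1}$ at some position in $w$, producing $w'$, and show $\Phi_\sigma(w) = \Phi_\sigma(w')$ for every symbol $\sigma$. Again I would induct on depth. The subtlety is that inserting the pair changes the underlying indexing set of letters on which all the associated functions live, so one must track how the standard lists $\Lambda_\ell$, the cobounding intervals, and the intersection operation behave under this insertion. For the standard list $\Lambda_x$ of the generator being inserted, the two new occurrences form a natural cancelling pair; one can choose a cobounding of $\Lambda_x(w')$ that brackets exactly this adjacent pair $x x^{-1}$ into a single tiny interval (its two endpoints adjacent), together with a cobounding of $\Lambda_x(w)$ on the remaining letters. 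The key geometric observation is that a length-two interval spanning only the inserted adjacent pair contains no other letters strictly between its endpoints, so intersecting any $L_b$ with it contributes nothing; meanwhile no other interval in the chosen cobounding need cross the inserted pair, so the counts against every other generator are unaffected.

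The heart of the argument is therefore showing that with such compatible choices of cobounding, every iterated linking list on $w'$ restricts to the corresponding list on $w$ after discarding contributions from the inserted pair, and that the discarded contributions always cancel or vanish. I would argue this by the same case analysis on interval position used in \refP{indepdinv}: an inserted adjacent pair either sits entirely inside, entirely outside, or is straddled by each cobounding interval, and in each case the inserted letters contribute to the intersection list only in cancelling pairs $(\ell,1),(\ell,-1)$ or not at all. Since \refP{indepdinv} guarantees that the final count is independent of all cobounding choices, it is legitimate to compute $\Phi_\sigma(w')$ using precisely the adapted cobounding that makes the comparison with $\Phi_\sigma(w)$ transparent. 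The induction then propagates: at each depth, the list produced on $w'$ is simply equivalent to the list produced on $w$ together with cancelling inserted contributions, so the counts agree, and the inductively produced intermediate lists feed correctly into the next level. The main difficulty is bookkeeping---carefully matching intervals and letters across the insertion and verifying the straddling case---rather than any deep new idea, as the essential mechanism is the same vanishing-and-cancellation phenomenon already exploited in the independence-from-cobounding proof.
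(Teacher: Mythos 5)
Your overall strategy coincides with the paper's on both axes: part one is an induction in which \refC{simplyequiv} does all the work, exactly as in the paper, and part two reduces to a single insertion of a cancelling pair $x x^{-1}$, handled by induction on depth with adapted coboundings (legitimated by \refP{indepdinv}) in which the inserted pair is cobounded by the length-two interval containing no other letters. This is precisely the paper's argument.

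However, one step in your case analysis is misstated in a way that matters. You claim the inserted letters enter each intersection list ``only in cancelling pairs $(\ell,1),(\ell,-1)$ or not at all,'' i.e., as pairs removable by simple equivalence. That is not what happens. Consider $d^{-1}L_b \wedge L_x$ where $L_x$ contains the inserted occurrences of $x$ and $x^{-1}$, say each with extrinsic sign $\varepsilon$: a cobounding interval $I$ for $L_b$ has $b$-letters as its endpoints, so (the inserted letters being adjacent) it contains either both of them or neither, and when it contains both it contributes $(x,\, \epsilon^I_0 \varepsilon)$ and $(x^{-1},\, \epsilon^I_0 \varepsilon)$ --- two \emph{different} letters carrying the \emph{same} extrinsic sign. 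This is what the paper calls a consecutive pair; it is not removable by simple equivalence, so you cannot conclude that the list on $w'$ is simply equivalent to the corresponding list on $w$ and then quote \refC{simplyequiv}. The inductive hypothesis must instead carry these pairs along explicitly, and their harmlessness comes from two facts: the two letters have opposite total signs, so they contribute zero to any count, and they can be cobounded by the interval consisting of exactly the two adjacent inserted letters, which contains no occurrence of any other generator --- the same ``tiny interval'' trick you use at the base case, now invoked at every depth. With the leftover pairs recorded in this corrected form your induction closes up and becomes the paper's proof. As a smaller point, the ``straddled'' case you propose to verify never occurs: an interval in a cobounding for a generator $b \neq x$ has $b$-letters as endpoints, and no letter can lie strictly between two adjacent letters.
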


\begin{proof}
First fix a word representative $w$.  That  $\Phi_\sigma(w)$ is independent of choice of list with symbol $\sigma$  
is immediate through inductive application of Corollary~\ref{simplyequiv}, 
which implies that all lists with the  symbol $\sigma$ will be simply equivalent
and thus have the same count $\Phi_\sigma$.

To show that the functions are well-defined on the free group, consider $w=w_1w_2$ and ${w}' =w_1 aa^{-1} w_2$.
We  identify lists in $w$ with lists in ${w}'$ and inductively show that there are choices of 
depth-$i$ lists on ${w}'$ which differ by consecutive pairs -- that is unions of the set $\{ (a, \varepsilon), (a^{-1}, \varepsilon) \}$ where $a$ and $a^{-1}$
are the added pair in ${w}'$ --  from the  lists 
with the same symbol on $w$.

The base case of lists $\Lambda_\ell$ is immediate.
Consider  some $d^{-1} L_a \wedge L_b$.  By inductive hypothesis, $L_a$ on ${w}'$  differs from the $L_a$
with the same symbol on $w$ by consecutive pairs.  Choose a cobounding which starts by taking intervals whose endpoints are 
consecutive pairs before cobounding the rest of the list.  Since no $b$'s can be in
the consecutive pair intervals, the lists $d^{-1} L_a \wedge L_b$  will be the same.
Next for $d^{-1} L_b \wedge L_a$ the lists will differ by consecutive pairs, as intervals cannot have their endpoints between 
$a$ and $a^{-1}$.  Finally, any $d^{-1} L_b \wedge L_c$ for $b,c$ distinct from $a$ will not differ between $w$ and ${w}'$. 
\end{proof}



\section{Symbol notation and basic identities}

\subsection{Symbols}

We first  develop more compact notation for symbols, replacing  $d^{-1}$ with parentheses, $\wedge$ with juxtoposition,
and $\Lambda_\ell$ with $\ell$.

\begin{definition}
A {\bf pre-symbol} is a parenthesized word in a generating set (no inverses) such that
\begin{itemize}
\item There is exactly one fewer pair of parentheses than letters.
\item  Every pair of parentheses contains exactly one letter which is not further parenthesized, which we call its free letter.
\item Every pair of parentheses is either nested or disjoint.

\end{itemize}
\end{definition}

The third condition disambiguates repeated parentheses in the standard way.
The first two conditions imply that at least one
single letter is parenthesized by itself and one letter is unparenthesized, within any pair of parentheses with at least two letters as well as for the entire word.
Examples include   $a(b(c))$, $(a) b (c)$,  $(a(e)) (a) (c) b$, and $((a)(a)b)c$.   

\begin{definition}

The {\bf depth} of a pre-symbol is the number of pairs of parentheses (one less than the length of the word).

A {\bf sub-(pre-)-symbol} of a pre-symbol  consists of either a  letter or a pair of parentheses and its contents.  The sub-(pre-)symbols
of a pre-symbol form a poset under {\bf containment}.  
Two symbols are  {\bf equivalent} if  their containment posets are isomorphic, through an isomorphism which preserves labels.
\end{definition}


Thus for example $((a)(a)b)c$ is equivalent to $c((a)b(a))$.  
Alternatively, equivalent  symbols can be obtained from one another by a series of permutations of the immediate contents
of any pair of parentheses, as well as permutation of outermost sub-symbols.  (One could have an  outmost set of parentheses 
to make this and other aspects of symbols more uniform, but
we have chosen not to as the current definition seamlessly fits with our letter-linking definitions.)

Symbols are 
reminiscent of parenthesizations in non-associative algebras such as Lie algebras, and the containment poset of a symbol is a rooted tree.  
We will develop  a duality with Lie brackets, but this duality is not
based on such superficial similarity.

\begin{definition}
The shortened (pre-)symbol of a depth-$j$ list is obtained inductively as follows.
\begin{itemize}
\item The shortened symbol of $\Lambda_\ell$ is $\ell$.
\item If the shortened symbol of $L_a$ is $\sigma$ and that of $L_b$ is $\tau$ then the shortened symbol of $d^{-1} L_a \wedge L_b$ is $(\sigma) \tau$ 
-- that is, the shortened symbol of $L_a$ parenthesized and followed by that  of $L_b$.  
\end{itemize}
\end{definition}

Equivalent symbols will give rise to isomorphic lists.
From now on, we use shortened (pre-)symbols to describe letter-linking invariants.  

Recall that in our definition of linking of lists, 
the lists in question must 
be comprised of different letters.  We capture this condition as follows.

\begin{definition}
Consider a pair of parentheses in a pre-symbol, whose immediate contents are of the form $(\sigma_1) \ldots (\sigma_k) \ell$, 
where each $\sigma_i$ is a symbol
and $\ell$ is the free
letter.  An almost-symbol is {\bf valid}  for this pair of parentheses if the free letters of $\sigma_i$ are all different from $\ell$.  
Define a {\bf symbol} to be pre-symbol which is valid for all its pairs of parentheses.
\end{definition}

In other words, free letters may be repeated, but not at neighboring levels. 
 
 \subsection{Homomorphism identities} 
 
In Section~\ref{independence} we showed that the letter-linking functions $\Phi_\sigma$ are well
defined, but we should recall that they are only defined on subsets of the free group, 
since the definition of $d^{-1} \Lambda_\mu \wedge \Lambda_\tau$
requires the vanishing of  $\Phi_\mu$.  
 Always implicit in our statements of identities in this paper, in particular those in this section, is that equalities hold
 only where all quantities involved are defined.  
 
 Recall that multiplication in free groups corresponds to concatenation of words, which we  denote by $w_1 \cdot w_2$.

\begin{proposition}
$\Phi_\sigma(w_1 \cdot w_2) = \Phi_\sigma(w_1) + \Phi_{\sigma}(w_2)$.
\end{proposition}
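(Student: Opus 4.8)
The plan is to induct on the depth of $\sigma$, proving along the way the stronger statement that whenever all three quantities are defined, one can choose a list $L$ with symbol $\sigma$ on $w_1 \cdot w_2$ that splits as a disjoint union $L = L^{(1)} \sqcup L^{(2)}$ of signed letters, where $L^{(1)}$ is a list with symbol $\sigma$ on $w_1$ and $L^{(2)}$ is a list with symbol $\sigma$ on $w_2$, with each signed letter lying in the half from which it is drawn. Granting such a decomposition the result is immediate: the count $\phi$ is additive over disjoint unions of signed letters, since total signs simply add, so $\phi(L) = \phi(L^{(1)}) + \phi(L^{(2)})$; and by Theorem~\ref{well-defined} these three counts are exactly $\Phi_\sigma(w_1 \cdot w_2)$, $\Phi_\sigma(w_1)$, and $\Phi_\sigma(w_2)$.

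For the base case $\sigma = \ell$, the standard list $\Lambda_\ell$ on $w_1 \cdot w_2$ is visibly the disjoint union of $\Lambda_\ell$ on $w_1$ and $\Lambda_\ell$ on $w_2$, since the occurrences of $\ell^{\pm 1}$ in the concatenation are precisely those of $w_1$ together with those of $w_2$. For the inductive step I write $\sigma = (\tau)\mu$, so the relevant list is $d^{-1} L_a \wedge L_b$ with $L_a$ of symbol $\tau$ and $L_b$ of symbol $\mu$. By the inductive hypothesis I may choose $L_a = L_a^{(1)} \sqcup L_a^{(2)}$ and $L_b = L_b^{(1)} \sqcup L_b^{(2)}$ splitting across the two halves.

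The heart of the argument is the choice of cobounding. Because all three quantities in the statement are assumed defined, and $\Phi_{(\tau)\mu}(w_i)$ is defined only when $\Phi_\tau(w_i) = 0$, the count $\Phi_\tau$ must vanish not only on $w_1 \cdot w_2$ but on each of $w_1$ and $w_2$ separately; that is, $\phi(L_a^{(1)}) = \phi(L_a^{(2)}) = 0$. This lets me cobound $L_a^{(1)}$ entirely within $w_1$ and $L_a^{(2)}$ entirely within $w_2$, producing a cobounding $d^{-1} L_a$ of the whole list in which no interval crosses the boundary between $w_1$ and $w_2$. Intervals lying in $w_1$ then intersect only letters of $L_b$ drawn from $w_1$, and likewise for $w_2$, so $d^{-1} L_a \wedge L_b = (d^{-1} L_a^{(1)} \wedge L_b^{(1)}) \sqcup (d^{-1} L_a^{(2)} \wedge L_b^{(2)})$, which exhibits the desired splitting for $\sigma$. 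This is a list with symbol $\sigma$, so by Theorem~\ref{well-defined} its count equals $\Phi_\sigma(w_1 \cdot w_2)$, completing the induction.

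I expect the main obstacle to be precisely the existence of this non-crossing cobounding. A priori the vanishing of $\Phi_\tau$ on the concatenation does not force vanishing on each half, and were it to fail on the halves one would be compelled to use intervals straddling the boundary, whose intersections with $L_b$ do not decompose additively across $w_1$ and $w_2$. The observation that dissolves this difficulty is that the standing hypothesis that all quantities are defined is exactly the assertion that the enabling counts vanish on $w_1$ and on $w_2$ individually, which is what licenses the separate coboundings; Proposition~\ref{indepdinv} and Corollary~\ref{simplyequiv} then guarantee that this convenient choice computes the same functionals as any other.
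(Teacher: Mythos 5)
Your proposal is correct and is essentially the paper's own argument, just written out in full: the paper's proof likewise inducts on the structure of $\sigma$, choosing coboundings on $w_1 \cdot w_2$ to be the union of coboundings on $w_1$ and on $w_2$ (licensed, as you observe, by the standing hypothesis that all quantities are defined) and then using that $\wedge$ distributes over this union. Your explicit identification of the non-crossing cobounding as the crux, and of definedness on each half as what makes it available, is exactly the content the paper compresses into the phrase ``when all defined.''
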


\begin{proof}
Inductively apply two facts.  First, when all defined, the coboundings on $w_1 \cdot w_2$ can be chosen to be the union of those on $w_1$ and $w_2$.
Secondly,  $\wedge$ distributes over union of coboundings.
\end{proof}

Recall that taking inverses in free groups corresponds to reversing the letters in a word and changing all intrinsic signs to their opposite, 
which we denote by $w^{-1}$.

\begin{proposition} 
$\Phi_\sigma(w^{-1}) = -\Phi_\sigma(w)$.
\end{proposition}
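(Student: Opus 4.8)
The plan is to show that inverting a word negates every letter-linking function, by analyzing how the basic operations of cobounding and intersection interact with the word-reversal-and-sign-change operation $w \mapsto w^{-1}$. I would proceed by induction on the depth of the symbol $\sigma$, tracking at each stage how a list $L$ taken from $w$ corresponds to a list $L^{-1}$ taken from $w^{-1}$, and how their counts relate.

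First I would set up the correspondence on lists. Recall that $w^{-1}$ is obtained from $w$ by reversing the order of the letters and sending each $x_j$ to its inverse (flipping intrinsic signs). Given a list $L$ taken from $w$, define the corresponding list $L^{-1}$ on $w^{-1}$ by keeping the same extrinsic signs but attaching each signed letter to the reversed-and-inverted occurrence. The base case is $\Lambda_\ell$: I would check that $\Lambda_\ell(w^{-1})$ is exactly $(\Lambda_\ell(w))^{-1}$ up to simple equivalence, since every $\ell^{\pm1}$ in $w$ becomes an $\ell^{\mp1}$ in $w^{-1}$, each still appearing once with extrinsic sign $+1$. The key computation at the base level is the count: since the intrinsic sign $\sign^\ell$ flips under inversion while the extrinsic sign is unchanged, the total sign $s_\ell$ of each signed letter negates, and therefore $\phi_\ell(\Lambda_\ell(w^{-1})) = -\phi_\ell(\Lambda_\ell(w))$. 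This is the arithmetic heart of the proposition.

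The inductive step is the main work. Suppose $\sigma = (\tau_a)\tau_b$, so that a representing list is $d^{-1} L_a \wedge L_b$ with $L_a$ of symbol $\tau_a$ and $L_b$ of symbol $\tau_b$. I would argue that a cobounding of $L_a$ on $w$ transports to a cobounding of the corresponding list on $w^{-1}$: each oriented interval $I$, determined by its endpoints, becomes an interval $I^{-1}$ on $w^{-1}$ with the same two boundary letters but reversed spatial order. The crucial point is how orientation transforms. Since reversal swaps left and right while inversion flips total signs at the endpoints, I would verify that the orientation $\mathrm{or}\,I = \epsilon_0^I$ behaves predictably—the positively-signed endpoint of $I$ becomes the negatively-signed endpoint of $I^{-1}$ and vice versa, so the orientation reverses. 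In the signed intersection $(y_i,\epsilon)\cap I_k$, the interior sign $\epsilon_0^I$ that gets multiplied in therefore flips. Combined with the flip in the intrinsic sign of $y_i$ itself under inversion, I expect the count of the resulting linking list to pick up exactly one overall negation relative to $w$.

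The main obstacle, and the step requiring the most care, is bookkeeping the signs in the inductive step so that they compose to a single factor of $-1$ rather than cancelling or compounding. There are two independent sign flips in play at each level—the intrinsic-sign flip from inverting letters, and the orientation flip from reversing intervals—and I must confirm they combine correctly through the recursive structure of $d^{-1} L_a \wedge L_b$. Specifically I would show that if the corresponding list on $w^{-1}$ at depth $j$ has all its signed-letter total signs negated relative to depth-$j$ list on $w$, then intersecting with a reversed cobounding preserves this ``all total signs negated'' property at the next depth. Once that invariant is established inductively, Theorem~\ref{well-defined} guarantees independence from the choices made, and the final count identity $\phi = -\phi$ follows immediately. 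Throughout I would lean on Proposition~\ref{indepdinv} and Corollary~\ref{simplyequiv} so that I am free to choose the convenient transported coboundings without loss of generality.
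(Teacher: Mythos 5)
Your overall strategy --- induct on the depth of $\sigma$ via the involution that reverses the word, inverts each letter, and keeps extrinsic signs --- is exactly the paper's approach, but your central sign computation is wrong, and the error sits precisely at the point the proposition turns on. You claim the orientation of a transported interval reverses: ``the positively-signed endpoint of $I$ becomes the negatively-signed endpoint of $I^{-1}$ and vice versa, so the orientation reverses.'' This does not follow. The orientation $\mathrm{or}\,I = \epsilon_0^I$ is the total sign of the \emph{spatially first} endpoint, and under the involution two things happen at once: the spatial order reverses (so the first endpoint of $I^{-1}$ corresponds to the last endpoint $\partial_1 I$ of $I$), and all total signs flip. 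Each effect alone would reverse the orientation; together they cancel. Concretely, $\partial_1 I$ has total sign $-\epsilon_0^I$, so the first endpoint of $I^{-1}$, being its image, has total sign $-(-\epsilon_0^I) = \epsilon_0^I$: orientations are \emph{preserved}. This is exactly what the paper's proof means by the parenthetical that orientations ``will change twice.''

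The error then makes your bookkeeping internally inconsistent. You assert that the orientation flip ``combined with the flip in the intrinsic sign of $y_i$'' yields ``exactly one overall negation,'' but two sign flips in the product $\epsilon_0^{I}\cdot\epsilon\cdot\mathrm{sign}^b(y_i)$ cancel: if orientations really reversed, every term of the count would be unchanged and you would have proved $\Phi_\sigma(w^{-1}) = +\Phi_\sigma(w)$. Equivalently, your proposed invariant (``all total signs negated at each depth'') cannot propagate under your orientation claim, since the new extrinsic sign $\epsilon_0^{I}\cdot\epsilon$ of an element of the linking list would then negate, and combined with the flipped intrinsic sign the total sign would be preserved rather than negated. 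The correct accounting, which is the paper's one-line proof: the involution preserves extrinsic signs and orientations (hence preserves the extrinsic signs of all iterated linking lists), and only intrinsic signs flip, so each term of each count acquires exactly one factor of $-1$ and all counts negate. With that single correction your induction closes as intended.
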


\begin{proof}
Define compatible involutions on lists and their coboundings by reversing letters and taking inverses, but leaving extrinsic signs unchanged.
Inductively we can choose $\Lambda_\sigma(w^{-1})$ as the image
of $\Lambda_\sigma(w)$ under this involution.  Under this involution,  counts of lists are multiplied by $-1$, as the orientations of
corresponding intervals
will not change (as they will change twice) while the signs of the corresponding letters will change.
\end{proof}


The homomorphisms $\Phi_\ell$, for a generator $\ell$, are  the composite of the map from the free group to its abelianization followed by
projection onto the $\ell$-summand of the abelianization.  We view the other homomorphisms
$\Phi_\sigma$ as derived from this abelianization.

\subsection{Leibniz identities}

 The geometry of intervals gives rise to key relations.
 
\begin{definition}
The {\bf intersection} $I \cap J$ of two oriented intervals is their intersection, with orientation given by the product of orientations.

If $S_i$, for $i = 1, \ldots, n$ are sets of intervals, define $\bigcap S_i$ to be $\bigcup_{I_1 \in S_1, \cdots, I_n \in S_n} I_1 \cap \cdots \cap I_n$.

\end{definition}

The following two facts are immediate from the definitions.

\begin{lemma}[Associativity]
$(I \cap J) \cap K = I \cap (J \cap K)$ and 
$((x_i, \epsilon) \cap I) \cap J = (x_i, \epsilon) \cap ( I \cap J ).$
\end{lemma}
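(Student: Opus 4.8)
The plan is to reduce both identities to two elementary associativity facts: associativity of set-theoretic intersection of consecutive-letter subsets of $w$, and associativity of multiplication in $\{\pm 1\}$. For this it is convenient to observe that in this section an oriented interval is tracked only by its underlying set of consecutive letters together with its orientation sign ${\rm or}\, I \in \{\pm 1\}$: the definition of $I \cap J$ records the orientation as the product ${\rm or}\, I \cdot {\rm or}\, J$ and discards all other boundary-point data, and this is precisely what the downstream counts will see. When a set-theoretic intersection is empty, both sides of each identity are empty and the claim holds trivially, so I would assume throughout that the relevant intersections are nonempty.

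For the first identity I would first compare underlying sets. As subsets of the letters of $w$, both $(I \cap J) \cap K$ and $I \cap (J \cap K)$ equal the common intersection of the three sets, and these agree because set intersection is associative (and an intersection of consecutive-letter sets is again consecutive). For the orientations, I would compute
$$ {\rm or}\big((I \cap J) \cap K\big) = ({\rm or}\, I \cdot {\rm or}\, J) \cdot {\rm or}\, K = {\rm or}\, I \cdot ({\rm or}\, J \cdot {\rm or}\, K) = {\rm or}\big(I \cap (J \cap K)\big), $$
using the definition of the orientation of an intersection together with associativity of sign multiplication.

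For the second identity I would unwind the definition of the signed intersection of a signed generator with an interval. On the left, $(x_i, \epsilon) \cap I$ equals $(x_i, \epsilon \cdot {\rm or}\, I)$ when $x_i \in I$ and is empty otherwise; intersecting the result with $J$ then yields $(x_i, \epsilon \cdot {\rm or}\, I \cdot {\rm or}\, J)$ precisely when $x_i \in I$ and $x_i \in J$, and is empty otherwise. On the right, $(x_i, \epsilon) \cap (I \cap J)$ equals $(x_i, \epsilon \cdot {\rm or}(I \cap J)) = (x_i, \epsilon \cdot {\rm or}\, I \cdot {\rm or}\, J)$ exactly when $x_i \in I \cap J$, that is, when $x_i$ lies in both $I$ and $J$. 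Thus both sides are nonempty under the same membership condition and carry the same sign, so they coincide.

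The only point requiring care — and it is the closest thing to an obstacle in an otherwise immediate argument — is the bookkeeping of the empty case: one must check that the membership condition ``$x_i \in I$ and $x_i \in J$'' produced by iterating the signed intersection on the left coincides with the condition ``$x_i \in I \cap J$'' read off on the right. Since both reduce to the same Boolean condition on $x_i$ and to the same product of signs in $\{\pm 1\}$, there is no genuine algebraic content beyond associativity, and the lemma follows as asserted.
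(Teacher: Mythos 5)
Your proof is correct and matches the paper's treatment: the paper offers no argument at all, stating that both identities are ``immediate from the definitions,'' and your verification---associativity of set intersection for the underlying consecutive-letter sets together with associativity of sign multiplication in $\{\pm 1\}$, plus the empty-case bookkeeping---is exactly that immediate unwinding. Nothing further is needed.
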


\begin{proposition}[Leibniz Rule]
$\partial (I \cap J) = (\partial I \cap J) \cup (I \cap \partial J$).  More generally 
$$\partial \bigcap_{i = 1 \cdots n}  S_i = \bigcup_i S_1 \cap \cdots \cap S_{i-1} \cap \partial S_i \cap S_{i+1} \cap \cdots \cap S_n.$$
\end{proposition}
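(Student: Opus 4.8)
The plan is to prove the two-interval identity $\partial(I\cap J) = (\partial I\cap J)\cup(I\cap\partial J)$ first and then bootstrap to the general statement by induction on $n$. Two formal observations make the bootstrap routine and should be recorded at the outset: both $\partial$ and $\cap$ distribute over the unions used to build $\bigcap_i S_i$, that is $\partial\big(\bigcup_\alpha A_\alpha\big)=\bigcup_\alpha \partial A_\alpha$ and $S\cap\big(\bigcup_\alpha A_\alpha\big)=\bigcup_\alpha(S\cap A_\alpha)$, each immediate from the definition of $\cap$ on sets of intervals and of $\partial$ on lists. Distributing the single-interval rule over the unions $S\cap T=\bigcup_{I\in S,\,J\in T} I\cap J$ upgrades it to the same rule for sets of intervals, $\partial(S\cap T)=(\partial S\cap T)\cup(S\cap\partial T)$. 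For the inductive step I would write $\bigcap_{i=1}^n S_i = S_1\cap\big(\bigcap_{i=2}^n S_i\big)$, invoking the Associativity lemma to justify the regrouping, apply the two-set rule to the pair $(S_1,\,\bigcap_{i\ge 2}S_i)$, and expand the second term by the inductive hypothesis; distributing $S_1\cap(-)$ over the resulting union produces exactly the sum over $i$ in the statement.

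For the base case I would work on the line of word positions, writing $I=[p_I,q_I]$ and $J=[p_J,q_J]$ in terms of first and last letters, so that $I\cap J=[\max(p_I,p_J),\,\min(q_I,q_J)]$ whenever it is nonempty. The governing geometric fact is that each endpoint of $I\cap J$ is an endpoint of exactly one of $I$, $J$ lying inside the other: the left endpoint is whichever of $p_I,p_J$ is larger and it lies in the opposite interval, and dually on the right. Since $(\partial I\cap J)\cup(I\cap\partial J)$ is by definition the collection of endpoints of $I$ lying in $J$ together with the endpoints of $J$ lying in $I$, the two sides record the same underlying letters. I would make this precise with the standard trichotomy — $I$ and $J$ disjoint, nested, or interleaved — checking in each case that the endpoint sets match; in the disjoint case both sides are empty, in the nested case one of $\partial I\cap J$, $I\cap\partial J$ is empty, and in the interleaved case each side contributes one endpoint.

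The real content, and the step I expect to be the main obstacle, is the sign bookkeeping. I would use three facts: $\mathrm{or}(I\cap J)=\mathrm{or}(I)\,\mathrm{or}(J)$ by definition; intersecting a signed letter with an interval rescales its extrinsic sign by the orientation of that interval; and the two endpoints of an oriented interval carry opposite total signs, so that $\partial_1(I\cap J)$ carries total sign $-\mathrm{or}(I\cap J)$. Tracking one endpoint at a time and rewriting every extrinsic sign as the total sign times the (position-determined, hence common) intrinsic sign $\iota$ of the underlying letter, one checks equality numerically. For instance, in the interleaved case the left endpoint of $I\cap J$ is $\partial_0 J$; on the right-hand side it appears in $I\cap\partial J$ with extrinsic sign $\mathrm{or}(I)\,\mathrm{or}(J)\,\iota$, while on the left-hand side it is $\partial_0(I\cap J)$ with total sign $\mathrm{or}(I)\,\mathrm{or}(J)$ and hence the same extrinsic sign $\mathrm{or}(I)\,\mathrm{or}(J)\,\iota$. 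The right endpoint is handled identically, now with the negated total sign at $\partial_1$, and the nested case is a shorter variant of the same computation. The one delicate point is a coincidence of endpoints, where a single letter is an endpoint of both $I$ and $J$; in every iterated-linking application the two intervals cobound lists of distinct generators, so their endpoints occupy distinct positions and the trichotomy with four distinct endpoints is exhaustive, which is the regime in which I would state the identity.
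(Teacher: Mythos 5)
Your proposal is correct. Note that the paper itself offers no argument at all: it declares both the Associativity lemma and the Leibniz Rule ``immediate from the definitions,'' so there is no proof to diverge from, and what you have written is the natural unpacking of that claim. Your two ingredients are exactly the right ones: the trichotomy (disjoint/nested/interleaved) shows the underlying letters on both sides agree, and the sign bookkeeping works because the extrinsic signs on $\partial(I\cap J)$ are forced by the requirement that $I\cap J$ be an oriented interval whose orientation is ${\rm or}(I)\,{\rm or}(J)$ and whose endpoints have opposite total signs --- precisely the convention you adopt --- while the right-hand side rescales extrinsic signs by the orientation of the opposite interval, per Definition~\ref{intersection}. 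Your handling of the coinciding-endpoint degeneracy is also consistent with how the paper actually uses the rule: in the proof of Proposition~\ref{arnoldgen} the intervals come from coboundings $d^{-1}L_{a_i}$ of lists in pairwise distinct generators, so all four endpoints occupy distinct positions and your trichotomy is exhaustive there. The induction to $n$ sets via distributivity of $\partial$ and $\cap$ over unions, with the Associativity lemma justifying the regrouping $\bigcap_{i=1}^n S_i = S_1\cap\bigl(\bigcap_{i\geq 2}S_i\bigr)$, is the only sensible bootstrap and is carried out correctly.
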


Here we are using  $\cap$ from Definition~\ref{intersection} for intersecting sets of signed letters with lists.

\begin{proposition}\label{arnoldgen}(Leibniz Relation)
$$\Phi_{\sigma_1 (\sigma_2) \cdots (\sigma_{k-1}) (\sigma_{k})} 
+  \Phi_{(\sigma_1) \sigma_2  \cdots (\sigma_{k-1}) (\sigma_{k})} + \cdots 
+  \Phi_{(\sigma_1) (\sigma_2)  \cdots \sigma_{k-1} (\sigma_{k})} 
+  \Phi_{(\sigma_1) (\sigma_2)  \cdots (\sigma_{k-1}) \sigma_{k}} = 0.$$ 
\end{proposition}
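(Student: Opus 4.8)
The plan is to interpret each summand geometrically in terms of a single fixed family of coboundings, and then to recognize the whole sum as the count of one boundary, which vanishes. Since all the displayed terms are assumed defined, for each block $\sigma_m$ the list $L_m$ it determines is homogeneous, say in the generator $g_m$, with $\phi(L_m)=0$; hence $L_m$ admits a cobounding $S_m := d^{-1}L_m$. By Theorem~\ref{well-defined} the values of $\Phi$ are independent of these choices, so I would fix one $S_m$ per block and reuse it in computing every term. The only thing that changes from summand to summand is which single block is left unparenthesized.

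Next I would identify the $i$-th summand. In $\Phi_{(\sigma_1)\cdots\sigma_i\cdots(\sigma_k)}$ the free block is $\sigma_i$, so the associated list is obtained by intersecting its list $L_i$ with the cobounded interval sets $S_j$ for $j\ne i$; by associativity of $\cap$, and the fact that equivalent symbols yield isomorphic lists, the order is irrelevant and this list is $L_i \cap \bigcap_{j\ne i}S_j$. Since a cobounding recovers its list on the nose, $\partial S_i = L_i$, so the $i$-th term equals $\phi_{g_i}\!\bigl(S_1\cap\cdots\cap\partial S_i\cap\cdots\cap S_k\bigr)$. Because every letter of this list is an occurrence of the single generator $g_i$, its $g_i$-count agrees with its total count $\sum_g \phi_g$, which lets me pass from the generator-specific counts to one count summed over all generators.

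Summing over $i$ and using additivity of counts over unions of lists, the total becomes $\sum_g \phi_g$ applied to $\bigcup_{i}\bigl(S_1\cap\cdots\cap\partial S_i\cap\cdots\cap S_k\bigr)$. The Leibniz Rule identifies this union with $\partial\bigcap_{m=1}^k S_m$, the boundary of a set of oriented intervals. The boundary of each oriented interval consists of two signed letters with opposite total signs, so its total count is zero; summing over the intervals gives $\sum_g\phi_g\bigl(\partial\bigcap_m S_m\bigr)=0$, which is the claim.

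The step I expect to be the main obstacle is the bookkeeping around the differing free letters: the free generator $g_i$ changes from term to term, so a priori the summands are counts taken with respect to different generators. The resolution is that each summand's list is homogeneous in its own $g_i$, so replacing $\phi_{g_i}$ by the total count $\sum_g\phi_g$ changes nothing term by term and, crucially, recasts all the terms as counts of the same kind, which can then be recombined under the Leibniz Rule. I would also need to check carefully that the standing ``where defined'' hypothesis guarantees each linking $L_i\wedge S_j$ is legitimate, so that the coboundings $S_m$ and their intersections are simultaneously available, and that using one fixed $S_m$ across all $k$ terms is justified by Theorem~\ref{well-defined}.
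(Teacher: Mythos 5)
Your proposal is correct and follows essentially the same route as the paper: both identify each summand as the count of $S_1\cap\cdots\cap\partial S_i\cap\cdots\cap S_k$, apply the Leibniz Rule to recognize the sum as the count of $\partial\bigcap_m S_m$, and conclude it vanishes because the boundary of a collection of oriented intervals has zero count. Your extra bookkeeping (fixing coboundings via Theorem~\ref{well-defined} and passing from $\phi_{g_i}$ to a total count) only makes explicit what the paper leaves implicit.
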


The first two cases of this identity have distinct names.  The $k = 2$ case, which can be rewritten as 
$\Phi_{(\sigma) \tau} = -\Phi_{\sigma (\tau)}$, is known as an antisymmetry relation.  
It should be considered in contrast with the fact that by definition, or essentially by commutativity of intersection,
 $\Phi_{(\sigma) \tau} = \Phi_{\tau (\sigma)}$.  We call the $k=3$ case the Arnold identity, with the connection to the 
 identity with the same name in topology, which figures prominently in our work in Section~\ref{mainsection}. 

\begin{proof}[Proof of Proposition~\ref{arnoldgen}]
The relation follows from a slightly more general fact.  Let $a_1, \cdots, a_k$ be distinct letters, and $d^{-1} L_{a_i}$ coboundings of lists in those letters.
Because $\partial \bigcap_i d^{-1} L_{a_i}$ is the boundary of a collection of intervals, its count is zero.  Thus by the Leibniz rule
$$\phi(\bigcup_i (d^{-1}L_{a_1} \cap d^{-1}L_{a_2}  \cap \cdots \cap {\widehat{d^{-1}L_{a_i}}} \cap \cdots \cap d^{-1}L_{a_k}) \wedge L_{a_i}) = 0.$$
Setting $L_{a_i}$ to be $\Lambda_{\sigma_i}$ in this equality establishes the Leibniz Relation.
\end{proof}

\subsection{Commutator identities}

We start with a result which could independently be deduced from stronger results in the next section.  The current treatment motivates those
results and illustrates  the proof technique for the stronger Theorem~\ref{definedonlcs} below.
 
 
 \begin{proposition}\label{cobracket1}
 $\Phi_{(\sigma) \tau}  [v, w] = \Phi_{\sigma}(v) \Phi_\tau(w) -  \Phi_\tau(v) \Phi_\sigma(w).$
 \end{proposition}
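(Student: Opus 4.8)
The plan is to work directly on the word $W=vwv^{-1}w^{-1}$ and reduce the iterated linking count to a single pairing between two antiderivative-type functions. Writing $a$ and $b$ for the free letters of $\sigma$ and $\tau$ (these are distinct because $(\sigma)\tau$ is a valid symbol), set $g_\sigma(x)=f_{L_\sigma}(x)\,\sign^a(x)$ and $g_\tau(x)=f_{L_\tau}(x)\,\sign^b(x)$, so that $\phi(L_\sigma)=\sum_x g_\sigma(x)$ and likewise for $\tau$. Starting from the associated function $\sum_{I\in d^{-1}L_\sigma} f_{L_\tau}\cdot f_I$ of $d^{-1}L_\sigma\wedge L_\tau$ recorded after Definition~\ref{intersection}, I would first rewrite the count as
\[
\Phi_{(\sigma)\tau}(W)=\sum_{I\in d^{-1}L_\sigma}(\mathrm{or}\,I)\,\phi_b(L_\tau|_I)=\sum_x g_\tau(x)\,H_\sigma(x),
\]
where $H_\sigma(x)=\sum_{I\ni x}(\mathrm{or}\,I)$ is the signed number of cobounding intervals covering $x$. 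Proposition~\ref{indepdinv} lets me take any cobounding, and I would choose the one whose covering function is the partial sum $H_\sigma(x)=\sum_{y\le x}g_\sigma(y)$.

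Second, I would exploit the standing hypothesis that $\Phi_\sigma(v),\Phi_\tau(v),\Phi_\sigma(w),\Phi_\tau(w)$ are all defined. This means every proper sub-count of $\sigma$ and of $\tau$ vanishes on each of $v$ and $w$, so by induction on depth (via Theorem~\ref{well-defined} and Corollary~\ref{simplyequiv}) I may choose $L_\sigma$ and $L_\tau$ on $W$ whose restrictions to the four blocks $v,w,v^{-1},w^{-1}$ are exactly the corresponding block lists, with all intermediate coboundings staying inside a single block. The one place this fails is the outermost cobounding of $L_\sigma$ itself, whose block-counts are $(\Phi_\sigma(v),\Phi_\sigma(w),-\Phi_\sigma(v),-\Phi_\sigma(w))$ and are generally nonzero; this is precisely where the cross terms will enter. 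Writing $p=\Phi_\sigma(v)$, $q=\Phi_\sigma(w)$, $r=\Phi_\tau(v)$, $s=\Phi_\tau(w)$, the partial-sum function $H_\sigma$ then accumulates these totals, taking the value $H^{(v)}_\sigma$ on the first block, $p+H^{(w)}_\sigma$ on the second, $p+q+H^{(v^{-1})}_\sigma$ on the third, and $q+H^{(w^{-1})}_\sigma$ on the fourth.

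Third, I would expand $\sum_x g_\tau(x)H_\sigma(x)$ blockwise. The constant parts $p,q,p+q$ pair against the block totals $\sum_{B_i}g_\tau$, which equal $r,s,-r,-s$, contributing the scalar combination $ps-(p+q)r-qs$. The remaining within-block pairings $T_v=\sum_{x\in v}g_\tau(x)H^{(v)}_\sigma(x)$ and the analogous $T_w$, $T_{v^{-1}}$, $T_{w^{-1}}$ must then be shown to reorganize into the missing cross terms.

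Finally — and this I expect to be the crux — I would relate $T_{v^{-1}}$ to $T_v$ through the reversal-and-inversion involution used to prove $\Phi_\sigma(w^{-1})=-\Phi_\sigma(w)$. That involution sends a mirror position $\bar x$ to values $g_\sigma(\bar x)=-g_\sigma(x)$, $g_\tau(\bar x)=-g_\tau(x)$, and $H^{(v^{-1})}_\sigma(\bar x)=H^{(v)}_\sigma(x)-g_\sigma(x)-p$, which yields
\[
T_{v^{-1}}=p\,\Phi_\tau(v)-T_v+\sum_{x\in v}g_\sigma(x)g_\tau(x)=pr-T_v,
\]
the diagonal sum vanishing because $a\neq b$ forces $g_\sigma$ and $g_\tau$ to have disjoint support. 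Hence $T_v+T_{v^{-1}}=pr$ and likewise $T_w+T_{w^{-1}}=qs$, so the within-block pairings contribute $pr+qs$; adding the scalar part $ps-(p+q)r-qs$ collapses everything to $ps-qr=\Phi_\sigma(v)\Phi_\tau(w)-\Phi_\tau(v)\Phi_\sigma(w)$. The main obstacles are the careful bookkeeping of this involution identity (including the off-by-one in the partial sums) and the rigorous justification of the blockwise choice of lists; the decisive structural input is the validity condition $a\neq b$, without which the diagonal terms would survive and the bracket formula would fail.
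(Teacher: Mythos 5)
Your proposal is correct, and I checked each step: the Abel--summation identity, the blockwise values of the partial sums, the reversal identity $H^{(v^{-1})}_\sigma(\bar x)=H^{(v)}_\sigma(x)-g_\sigma(x)-p$, and the final collapse to $ps-qr$ all hold. The route is genuinely different from the paper's in its computational core, though the setup is shared: both arguments assemble $\Lambda_\sigma$ and $\Lambda_\tau$ on $vwv^{-1}w^{-1}$ blockwise from involution-compatible lists on $v,w,v^{-1},w^{-1}$. The paper then makes an explicit choice of the outermost cobounding $d^{-1}\Lambda_\sigma$, pairing each letter in $v$ with its mirror in $v^{-1}$ and each letter in $w$ with its mirror in $w^{-1}$, so the two cross terms can be read off geometrically: the $v$--$v^{-1}$ intervals engulf all of $w$, miss $w^{-1}$, and meet $\Lambda_\tau(v)$ and $\Lambda_\tau(v^{-1})$ in involution-cancelling pairs, and symmetrically for the $w$--$w^{-1}$ intervals. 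You never choose the outer cobounding at all: your observation that the signed covering function equals the running partial sum $H_\sigma(x)=\sum_{y\le x}g_\sigma(y)$ is in fact automatic for \emph{every} cobounding at every position not occupied by the letter $a$ (crossing an occurrence of $a$ changes the covering number by exactly $g_\sigma$ there, since each list element is used exactly once as an endpoint and the two endpoints of an oriented interval have opposite total signs), so your appeal to Proposition~\ref{indepdinv} is not even needed, and the endpoint off-by-one you worry about is invisible to $g_\tau$ precisely because validity forces $a\neq b$. What the paper's route buys is that its technique --- involution-compatible lists plus mirror-pair coboundings --- is exactly the one that scales up to the proof of Theorem~\ref{definedonlcs}, which is why the paper presents it this way. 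What your route buys is a choice-free closed formula $\Phi_{(\sigma)\tau}=\sum_x g_\tau(x)H_\sigma(x)$ for a depth-raising count, which reproves the cobounding-independence statement along the way and reduces the proposition to elementary algebra; it is arguably the cleaner proof of this particular identity, though it is less clear that it generalizes to the iterated setting where the paper reuses its method.
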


\begin{proof}
By convention $\Phi_\sigma(v)$ and $\Phi_{\sigma}(w)$ are defined, and thus so are $\Phi_\sigma(v^{-1})$ and $\Phi_{\sigma}(w^{-1})$.
We use the  lists and cobounding intervals which define them to produce the list $\Lambda_\sigma(vwv^{-1}w^{-1})$ as the union of 
lists identified with $\Lambda_\sigma(v)$, $\Lambda_\sigma(w)$, $\Lambda_\sigma(v^{-1})$, and $\Lambda_\sigma(w^{-1})$, which
we also assume to be chosen to respect the inverse involution.
Call the generator in these lists $a$.  The inverse involution matches occurrences of 
$a^{\pm 1}$ in $\Lambda_\sigma(v)$ with those
of $a^{\mp 1}$ in $\Lambda_\sigma(v^{-1})$ and similarly for $w$, $w^{-1}$, through which we choose our cobounding $d^{-1} \Lambda_\sigma([v,w])$.
  Similarly choose $\Lambda_\tau([v,w])$ as the union of lists which can be identified with 
  $\Lambda_\tau(v)$, $\Lambda_\tau(w)$, $\Lambda_\tau(v^{-1})$, and $ \Lambda_\tau(w^{-1})$.  See Figure~\ref{secondfigure} for a schematic.
  
\begin{figure}[h]
\includegraphics[width=10cm]{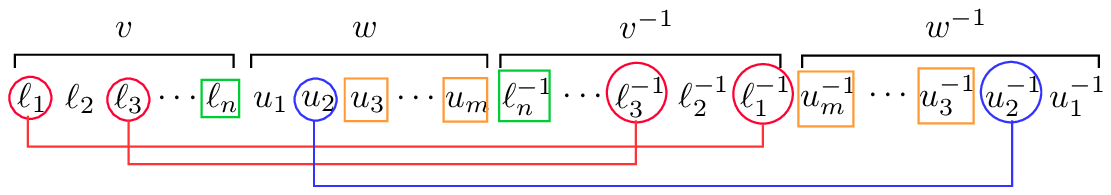} 
\caption{Schematic  for Proposition~\ref{cobracket1}.  \\
 $\Lambda_\sigma(v)$ \& $\Lambda_\sigma(v^{-1})$ are denoted by red circles;   
$\Lambda_\sigma(w)$ \& $\Lambda_\sigma(w^{-1})$ by blue circles; $\Lambda_\tau(v)$ \& $\Lambda_\tau(v^{-1})$ by  green squares;
 $\Lambda_\tau(w)$ \& $\Lambda_\tau(w^{-1})$ by orange squares. }\label{secondfigure}
\end{figure}
   
With these choices  consider  $d^{-1} \Lambda_\sigma([v,w]) \wedge  \Lambda_\tau([v,w])$, starting with the intervals which cobound
across $v$ and $v^{-1}$.  Such intervals do not intersect $\Lambda_\tau(w^{-1})$.  Their intersections with $\Lambda_\tau(v)$ and 
$\Lambda_\tau(v^{-1})$ are matched under
the inverse involution we have used to define our lists.  And these intervals have full intersections with $\Lambda_\tau(w)$, yielding a total contribution
of $\Phi_\sigma(v) \cdot \Phi_\tau(w)$ to $\Phi_{(\sigma) \tau}  [v, w]$.
  
Analysis of the coboundings across $w$ and $w^{-1}$ are similar, with only intersections 
with $\Lambda_{\tau}(v^{-1})$ contributing, yielding $ -  \Phi_\tau(v) \Phi_\sigma(w).$
\end{proof}

\begin{definition}
Let $G$ be a group.  Inductively define the lower central series of groups by $\gamma_i G = [\gamma_{i-1} G, G]$, with $\gamma_0 G = G$.
Inductively define the derived groups by $G^{(i)} = [G^{(i-1)}, G^{(i-1)}]$,  with $G^{(0)} = G$.
\end{definition}

We may inductively apply Proposition~\ref{cobracket1}, starting with the immediate fact that $\Phi_\ell$  vanishes on commutators, to see that 
$\Phi_\sigma$ vanishes on the derived group $F_n^{(i)}$ for $i$  greater than the depth of $\sigma$.
But we  now show that the letter-linking functions $\Phi_\sigma$  in fact vanish on the  lower central series.  


\begin{theorem}\label{definedonlcs}
If the depth of a symbol is less than $i$, then the corresponding letter-linking invariant vanishes on $\gamma_i F_n$.  Thus if 
the depth of $\sigma$ is equal to $i$, $\Phi_\sigma$ is defined on $\gamma_i F_n$.
\end{theorem}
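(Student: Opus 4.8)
The plan is to prove, by induction on the depth $d$ of $\sigma$, the combined statement that $\Phi_\sigma$ is \emph{defined} on $\gamma_i F_n$ for every $i\ge d$ and \emph{vanishes} there for every $i>d$; since the theorem asserts vanishing exactly when $d<i$ and definedness when $d=i$, this suffices. Because the lower central series is decreasing, the vanishing clause need only be checked at $i=d+1$. The base case $d=0$ is the observation (made just after the homomorphism identities) that $\Phi_\ell$ factors as the abelianization followed by a coordinate projection, hence vanishes on $\gamma_1 F_n=[F_n,F_n]$ and is defined everywhere. For the definedness clause I would use that $\Phi_\sigma$ is defined on a word $w$ precisely when every parenthesized, hence cobounded, sub-symbol $\rho$ of $\sigma$ satisfies $\Phi_\rho(w)=0$, so that the required coboundings exist (and are choice-independent by Proposition~\ref{indepdinv}). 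Every such $\rho$ is a proper sub-symbol, so $\mathrm{depth}(\rho)<d$; when $i\ge d$ we have $i>\mathrm{depth}(\rho)$, and the inductive hypothesis forces $\Phi_\rho\equiv 0$ on $\gamma_iF_n$. Thus all needed coboundings exist at every point of $\gamma_iF_n$, giving definedness there, which is the ``Thus'' clause of the theorem.

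For the vanishing clause at $i=d+1$, I would use that $\Phi_\sigma$ is a homomorphism, so it is enough to kill a generating set of $\gamma_{d+1}F_n=[\gamma_d F_n,F_n]$; this reduces the problem to showing $\Phi_\sigma([u,f])=0$ for $u\in\gamma_d F_n$ and $f\in F_n$ (and, via the expansion $[u,fg]=[u,f]\cdot{}^{f}[u,g]$ together with conjugation-invariance of $\Phi_\sigma$, one may even take $f=x_j$ a single generator). I would then compute $\Phi_\sigma([u,f])$ by the geometric template of Proposition~\ref{cobracket1}: writing the top-level decomposition $\sigma=(\alpha)\beta$ with $\mathrm{depth}(\alpha),\mathrm{depth}(\beta)<d$, build $\Lambda_\sigma$ over the four blocks of $u\,f\,u^{-1}f^{-1}$ using a cobounding that respects the inverse involution pairing the $u$-block with the $u^{-1}$-block and the $f$-block with the $f^{-1}$-block. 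The cancellation in that proof organizes every surviving contribution as a multiple of a sub-symbol count evaluated on $u$; since every sub-symbol $\rho$ involved has $\mathrm{depth}(\rho)<d$ while $u\in\gamma_d F_n$, the inductive hypothesis gives $\Phi_\rho(u)=0$, forcing $\Phi_\sigma([u,f])=0$.

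The hard part will be exactly the place where Proposition~\ref{cobracket1} cannot be quoted verbatim. That proposition presupposes that both partial functionals $\Phi_\alpha,\Phi_\beta$ are defined on \emph{both} commutator entries, whereas here the entry $f$ is an arbitrary element of $F_n$ and $\Phi_\alpha(f)$ need not be defined at all: already the innermost parenthesized letter of $\alpha$ demands that $f$ lie in a commutator subgroup, so the block lists $\Lambda_\alpha(f)$ and $\Lambda_\beta(f)$ used to assemble the commutator list may fail to exist. The crux is therefore to re-run the interval-by-interval cancellation of Proposition~\ref{cobracket1} intrinsically on the global list $\Lambda_\sigma([u,f])$ --- which is defined, by the previous paragraph, because $[u,f]\in\gamma_{d+1}F_n$ --- using an involution-equivariant global cobounding rather than a block-by-block construction. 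The technical heart is then to verify that, after this choice, each interval either cancels against its involution partner or contributes only through some count $\Phi_\rho(u)$ with $\mathrm{depth}(\rho)<d$, so that the ill-defined partial functionals on $f$ never actually appear and only the vanishing sub-symbol counts on the deep entry $u$ remain. This is the step I expect to require the most careful bookkeeping, and it is precisely the generalization that Proposition~\ref{cobracket1} was meant to foreshadow.
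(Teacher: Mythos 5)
Your setup matches the paper's proof in structure: induction on depth, base case $\Phi_\ell$, definedness obtained from vanishing of all lower-depth sub-symbol counts, and the vanishing clause reduced via the homomorphism identities to commutators $[u,f]$ with $u\in\gamma_d F_n$ deep and $f\in F_n$ arbitrary. You also correctly diagnose why Proposition~\ref{cobracket1} cannot be quoted verbatim: the partial functionals $\Phi_\alpha(f)$, $\Phi_\beta(f)$ need not be defined on an arbitrary $f$. But at exactly that point the proposal stops being a proof. The sentence ``the technical heart is then to verify that \dots each interval either cancels against its involution partner or contributes only through some count $\Phi_\rho(u)$'' is a statement of what remains to be shown, not an argument, and it is the entire content of the theorem beyond what Proposition~\ref{cobracket1} already gives. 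A plan that defers the crux to ``careful bookkeeping'' has a genuine gap there.

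It is also worth noting that the claim you propose to verify is formulated at the level of counts, and that formulation is too weak to support the recursion. The paper closes the gap with a second, nested induction proving a structural statement about \emph{lists}: for every symbol $\tau$ of depth at most $i-1$, the list $\Lambda_\tau(vwv^{-1})\setminus\Lambda_\tau(w)$ is defined and splits into ``type-one'' letters (lying in $v\cup v^{-1}$ and preserved by the $v$-$v^{-1}$ involution) and ``type-two'' letters (a union of lists $\Lambda_{\hat\tau_k}(w)$ for symbols $\hat\tau_k$ of strictly smaller depth). This list-level decomposition is what makes the induction close: when the next cobounding is intersected, one must know the shape of the list being intersected, and a four-case analysis (type-one/type-two intervals against type-one/type-two letters) shows the decomposition propagates --- for instance, type-one intervals meet type-two lists in whole lists $\Lambda_{\hat\tau_k}(w)$, and type-two intervals meet type-two lists in lists $\Lambda_{(\hat\mu_j)\hat\nu_k}(w)$, again of lower depth. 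Knowing only how the final count decomposes gives no induction hypothesis strong enough to push through the nested parentheses of $\sigma$. (Separately, your parenthetical reduction to $f=x_j$ via ``conjugation-invariance of $\Phi_\sigma$'' is unjustified, since additivity holds only where all terms are defined and $\Phi_\sigma(g)$ need not be defined for the conjugating element; but that reduction is also unnecessary, as the argument works for arbitrary $f$.)
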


\begin{proof}
We argue inductively, starting with the immediate fact that $\Phi_\ell$ vanishes on commutators.

Let $\sigma$ be a symbol of depth $i-1$.  Let $w \in \gamma_{i-1} F_n$ and $v \in F_n$, so $[w^{-1}, v]$ represents
an element of $ \gamma_i F_n$, 
and consider $\Phi_\sigma ([w^{-1} ,v]) = \Phi_\sigma ( w^{-1} v w v^{-1} )$.  As $\Phi_\sigma$ is defined on $w$ and $w^{-1}$ we choose
to construct $\Lambda_\sigma [w^{-1},v]$ starting with $\Lambda_\sigma w$ and $\Lambda_\sigma w^{-1}$, whose counts  will cancel.

We complete $\Lambda_\sigma [w,v]$ through a construction of  $\Lambda_\sigma v w  v^{-1} \backslash \Lambda_\sigma w$,
which allows us to show the  count vanishes.  
Such a construction is the ultimate  case of a second inductive claim that for any symbol $\tau$ of 
depth less than or equal to $i-1$, the list 
$\Lambda_\tau v w v^{-1} \backslash \Lambda_\tau w$ is defined and is comprised of letters with particular forms in
$v \cup v^{-1}$ and $w$, namely:
\begin{itemize}
\item The letters in $v \cup v^{-1}$   are preserved under the $v$-$v^{-1}$ involution.
\item The letters in $w$  are a union of  lists $\Lambda_{\hat{\tau}_k} w$ for some collection of symbols 
$\{ \hat{\tau}_k \}$ each of depth less than that of $\tau$.  
\end{itemize}
For brevity we call lists of letters in $v \cup v^{-1}$ and in $w$ with these properties type-one and type-two, respectively. 
Type-one pairs immediately have zero count, and type-two letters have zero count as well since each 
$\Lambda_{\hat{\tau}_k} w$ has zero count by our 
primary inductive hypothesis.  

We prove this second claim through an induction on the depth of $\tau$.  
We may use the vanishing statement of our theorem  up to depth $i - 2$.
For depth zero, $\Lambda_\ell v w v^{-1} \backslash \Lambda_\ell w$ consists of occurrences of $\ell$ in $v v^{-1}$, which 
indeed occur in pairs preserved under involution, and thus is comprised entirely of type-one letters.  

Next assume $\tau = (\mu_1) \mu_2$ where the claim has been verified for the $\mu_i$.  We choose the cobounding of the type-one subset of 
$\Lambda_{\mu_1} v w v^{-1} \backslash \Lambda_{\mu_1}w$ by cobounding pairs which correspond with one another under involution.
We then cobound the type-two pairs, a cobounding which exists by inductive assumption because the depth of $\mu_1$ is less than $i-1$.
Consider the four cases for intersection arising in
 $(d^{-1} \Lambda_{\mu_1} v w v^{-1} \backslash d^{-1} \Lambda_{\mu_1} w) \wedge 
(\Lambda_{\mu_2} v w v^{-1} \backslash  \Lambda_{\mu_2} w)$:

\begin{itemize}
\item The intersection of type-one pairs in $\Lambda_{\mu_2} v w v^{-1} \backslash \Lambda_{\mu_2} w$ with  type-one
cobounding intervals in $d^{-1} \Lambda_{\mu_1} v w v^{-1} \backslash  d^{-1} \Lambda_{\mu_1} w^{-1}$ 
is a collection of type-one pairs.
\item The intersection of a type-one cobounding interval for $\mu_1$ with any list of letters in $w$, in particular any of
the $\Lambda_{\hat{\mu_2}_k w}$, is the list itself.  Thus the intersection of all such cobounding intervals  with a union of 
 $\Lambda_{\hat{\mu_2}_k w}$ is another such union.  Because the depth of $\hat{\mu_2}_k$ is less than that of $\mu_2$ it is less
 than that of $\tau$.
 \item Type-two cobounding intervals are contained in $w$, so their intersection with type-one pairs is empty.
 \item The intersection of type-two cobounding intervals from some $d^{-1} \Lambda_{\hat{\mu_1}_j} w$ with all the type-two pairs
 from some $\Lambda_{\hat{\mu_2}_k} w$ is by definition $\Lambda_{(\hat{\mu_1}_j) \hat{\mu_2}_k} w $.  Thus the union of all
 such intersections is the union of lists $\Lambda_{\hat{\tau}_\alpha} w$.
\end{itemize}

With this second induction step and thus the second induction claim established, we apply it for $\tau = \sigma$.  We deduce
that the count of $\Lambda_\sigma v w v^{-1} \backslash \Lambda_\sigma w$ is zero, completing our main induction.
\end{proof}

\begin{corollary}\label{lcsdefined}
The $\Phi_\sigma$ of depth $i$ are well defined on the lower central series subquotients $\gamma_i F_n / \gamma_{i+1} F_n$.  
\end{corollary}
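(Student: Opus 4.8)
The plan is to deduce this as a formal consequence of Theorem~\ref{definedonlcs} together with the homomorphism identities $\Phi_\sigma(w_1 \cdot w_2) = \Phi_\sigma(w_1) + \Phi_\sigma(w_2)$ and $\Phi_\sigma(w^{-1}) = -\Phi_\sigma(w)$ established earlier. These identities show that, wherever it is defined, $\Phi_\sigma$ is a homomorphism to $\Z$; in particular, by Theorem~\ref{definedonlcs} a symbol $\sigma$ of depth $i$ yields a well-defined homomorphism $\Phi_\sigma \co \gamma_i F_n \to \Z$, its value on group elements (as opposed to word representatives) having already been secured by Theorem~\ref{well-defined}.

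To descend this homomorphism to the subquotient $\gamma_i F_n / \gamma_{i+1} F_n$, I would first recall that $\gamma_{i+1} F_n$ is normal in $F_n$, hence normal in $\gamma_i F_n$, so that the subquotient is a well-defined abelian group. It then suffices to check that $\Phi_\sigma$ vanishes on $\gamma_{i+1} F_n$: if $g, g' \in \gamma_i F_n$ lie in a common coset then $g(g')^{-1} \in \gamma_{i+1} F_n$, and the homomorphism property gives $\Phi_\sigma(g) - \Phi_\sigma(g') = \Phi_\sigma\bigl(g (g')^{-1}\bigr)$, which is zero once the vanishing is known.

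That vanishing is precisely the first assertion of Theorem~\ref{definedonlcs} applied with the index $i+1$ in place of $i$: since the depth of $\sigma$ equals $i$, which is strictly less than $i+1$, the corresponding letter-linking invariant vanishes on $\gamma_{i+1} F_n$. Hence $\Phi_\sigma$ is constant on cosets and induces a homomorphism $\gamma_i F_n / \gamma_{i+1} F_n \to \Z$, as claimed.

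I do not expect a genuine obstacle here; the substantive content lies entirely in Theorem~\ref{definedonlcs}, and this corollary merely repackages its definedness statement (for the index $i$) and its vanishing statement (for the index $i+1$). The only points requiring care are the bookkeeping of the shifted index and the use of the homomorphism identities to reduce well-definedness on cosets to vanishing on $\gamma_{i+1} F_n$, both of which are routine.
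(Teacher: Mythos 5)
Your proposal is correct and follows exactly the route the paper intends: the paper states this corollary without proof precisely because it is the routine consequence of Theorem~\ref{definedonlcs} (definedness at depth $i$, vanishing at index $i+1$) combined with the homomorphism identities, which is what you wrote. The index shift and the reduction of coset-independence to vanishing on $\gamma_{i+1} F_n$ are handled correctly, so there is nothing to add.
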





\section{Lie coalgebraic graphs}\label{mainsection}

To evaluate letter interleaving invariants on the lower central series subquotients, 
 and in particular show they span the linear
dual,  it is necessary to 
bring in the combinatorial approach to Lie coalgebras of \cite{SiWa11, SiWa13}.
We give a logically self-contained treatment of the needed parts of this theory here.
We have a combinatorial rather than algebraic 
emphasis, but the motivation is still drawn from the topology encoded by 
Lie coalgebras and the geometry of Hopf invariants.

\subsection{Eil graphs and letter-linking}

 Recall that the free Lie algebra on a set $S$
 are well-known to be modeled by trees with leaves labeled by $S$. We found in \cite{SiWa11} that its linear dual, 
the cofree Lie coalgebra on $S$, has a natural model defined by acyclic graphs with vertices labeled by $S$, with a combinatorially rich 
(in particular, not Kronecker)
pairing between these models.
By Theorem~\ref{definedonlcs} and the fact that the lower-central series subquotients of free groups naturally form free Lie algebras,
our letter-linking homomorphisms are naturally identified with elements of  co-free Lie coalgebras.  But these homomorphisms are
indexed by symbols, rather than by acyclic graphs with vertices labeled by letters.  We  combine 
 the two approaches in order to relate them.

\begin{definition}
A 
{\bf symbol graph}
is an acyclic, connected, 
oriented graph whose vertices are labeled by symbols on a fixed generating set, so that if the vertices of two symbols are connected by an
edge then their free letters must be distinct.
Let $\gs$ denote the set of such and $\gs_{n,m}$ denote the subset with $m$ edges and whose symbols have depths which sum to $n$.
\end{definition}

The most important cases are  $m=0$, in which case we have  a symbol labeling a solitary vertex, and $n=0$ in which case
we have an acyclic graph whose vertices are generators such that edges only connect distinct generators.  
We call the latter {\bf distinct-vertex Eil graphs} because, as we prove below building on results of \cite{SiWa11},
they provide a model for the cofree Lie coalgebra on the generating set.  The intermediate cases with both $n,m \neq 0$ are needed
to relate symbols and Eil graphs, through a process used to define 
Hopf invariants in \cite{SiWa13}.

\begin{definition}
Let $v$ be a vertex in a symbol graph $G \in \gs_{n,m}$.  When all resulting terms are valid symbols, the 
{\bf reduction} of $G$ at $v$, denoted $\rho_v G$ is the linear
combination $\sum_{v \in \partial e} or_v(e)  G_{v,e} \in \Z \gs_{n+1,m-1}$ where 
\begin{itemize}
\item the sum is over edges $e$ incident upon $v$,
\item $or_v(e)$ is equal to $1$ if $e$ is oriented away from $v$ and $-1$ if oriented towards $v$,
 \item $G_{v,e}$ is obtained from $G$ by contracting the edge $e$ and labeling its image in the quotient by $(\sigma)\tau$ where $\sigma$
 is the label of $v$ and $\tau$ is the label of the other endpoint of $e$.
 \end{itemize}
 If any of the $G_{v,e}$ are not valid, we say the reduction of $G$ at $v$ is undefined.

  \end{definition}
 
  If there is a unique vertex $v$ labeled by some letter $\ell$, we may use the notation $\rho_\ell$ in place for $\rho_v$.
For example if $G = \graphpp{a}{b}{c}$, then 
$\rho_b (G) = -\linep{ a(b)}{c} + \linep{a}{ (b) c}$.  

This definition is motivated by the process of weight reduction 
in the Lie coalgebraic bar construction, used to define  Hopf invariants in  \cite{SiWa13}.  If one takes the definition of weight reduction
for Hopf invariants for higher homotopy groups of simply connected spaces and extends by applying it for the fundamental group
of a wedge of circles, one is led to this definition of reduction of symbol graphs.

\begin{definition}
If $w \neq v$ then by abuse we also use $w$ to denote the corresponding vertex under identification in any $G_{v,e}$.  By this convention,  
$\rho_w$ is  defined
on all such $G_{v,e}$, and we let the {\bf composite} $\rho_w \circ \rho_v (G)$ be defined by extending  linearly, namely 
$\sum_{v \in \partial e} or_v(e)  \rho_w G_{v,e}$, if all reductions are valid.

If $V = v_1, \ldots, v_k$ is a set of vertices of $G$ let $\rho_V$ be the composite $\rho_{v_k} \circ \rho_{v_{k-1}} \circ \cdots \circ  \rho_{v_1}$,
if defined.  If this composite is not defined we say $V$ is not valid.
\end{definition}
 
 Of primary interest is when one reduces to a sum of graphs each of which has a single vertex decorated by a symbol, 
 which we identify with the corresponding sum of symbols.
 For example $\rho_{b,a}$ of $G = \graphpp{a}{b}{c}$ is $ - (a(b)) c + ( a)(b) c$, while $\rho_{a,c} (G) = - (a) b (c)$.
 
\begin{definition}
Suppressing the set of generators from notation, let $\sy_n$ denote the set of {\bf symbols of depth $n$}, which is canonically identified with $\gs_{n,0}$.
Extend the letter-linking
homomorphisms $\Phi_{\sigma}$ linearly to $\Z \sy_n$, with the domain of definition of a linear combination of homomorphisms
given by intersection of the domains of the constituents.
\end{definition}
 
 We will find it fruitful to use not only symbols but also graphs to  parametrize letter-linking homomorphisms, 
as facilitated by the following main result. 
 
\begin{theorem}\label{graph-symbol}
Let $G \in \gs_{n,m}$.  The letter-linking 
homomorphism $\Phi_{{\rho_V} G}$ is independent of choice of valid ordered set of $m-1$ vertices $V = v_1, \ldots, v_{m-1}$. 
\end{theorem}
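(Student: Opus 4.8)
The plan is to induct on the number of edges $m$, reducing the statement to the claim that $\Phi_{\rho_V G}$ is independent of the \emph{first} vertex at which one reduces. The base case $m=1$ is vacuous, since then $V$ is empty and $\rho_V G = G$ is a single edge. For the inductive step, fix $G \in \gs_{n,m}$ and any valid first vertex $v$; the reduction $\rho_v G \in \Z\gs_{n+1,m-1}$ is a linear combination of graphs with one fewer edge, so by the inductive hypothesis $\Phi$ of any valid further reduction of $\rho_v G$ by $m-2$ vertices is a single well-defined number $\Psi(\rho_v G)$. Since any valid $V = (v_1, \ldots, v_{m-1})$ satisfies $\Phi_{\rho_V G} = \Psi(\rho_{v_1} G)$, the theorem follows once I show $\Psi(\rho_v G) = \Psi(\rho_{v'} G)$ for any two valid first vertices $v, v'$.

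I would split this into the cases where $v, v'$ are non-adjacent and where they are adjacent. When $v$ and $v'$ are non-adjacent, the two reductions act on essentially disjoint local data: contracting an edge at $v$ neither deletes nor reorients any edge at $v'$. The two orders $\rho_{v'}\rho_v G$ and $\rho_v \rho_{v'}G$ therefore agree up to permuting the immediate contents of a parenthesis (this permutation arises only when $v$ and $v'$ share a neighbor, whose label accumulates the two parenthesized sub-symbols in opposite orders), and such a permutation yields equivalent symbols and hence equal letter-linking functionals. Choosing further reduction sequences beginning with $v'$, respectively $v$, and applying the inductive hypothesis to the resulting $(m-1)$-edge graphs then gives $\Psi(\rho_v G) = \Phi_{\rho_W \rho_{v'}\rho_v G} = \Phi_{\rho_W \rho_v \rho_{v'} G} = \Psi(\rho_{v'}G)$; one must only check that the intermediate reductions stay valid, which is arranged using the convention that invalid reductions are discarded together with the symmetry of the validity condition under swapping the two sub-symbols.

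The adjacent case, where $v$ and $v'$ share an edge $e_0$, is the crux, since here the reductions genuinely interact: $\rho_v G$ sums over \emph{all} edges incident to $v$, including $e_0$ and the edges into the subtree on $v$'s side, and likewise for $\rho_{v'}G$, so the two signed sums regroup the hanging sub-symbols differently. My plan is to use the inductive hypothesis to reduce every edge not meeting $v$ or $v'$ first, collapsing each subtree hanging off $v$ and off $v'$ to a single symbol-labeled leaf, thereby reducing the claim to a local identity on the ``double star'' consisting of $v$ and $v'$ joined by $e_0$ and carrying these leaves. I expect this local identity to be exactly an instance of the Leibniz relation of Proposition~\ref{arnoldgen}, together with the antisymmetry $\Phi_{(\sigma)\tau} = -\Phi_{\sigma(\tau)}$ and commutativity $\Phi_{(\sigma)\tau} = \Phi_{\tau(\sigma)}$: its smallest instance, a path on three vertices, is precisely the three-term Arnold identity, as one verifies directly, and extra leaves promote it to the full $k$-term relation. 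The hard part will be the orientation and sign bookkeeping --- matching each factor $or_v(e)$ produced by reduction with the corresponding sign in the Leibniz relation, and confirming that the two regroupings of hanging sub-symbols differ, under $\Phi$, by precisely a Leibniz combination and therefore have equal value.
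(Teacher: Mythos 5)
Your inductive skeleton is attractive and genuinely different from the paper's: by reducing everything to independence of the \emph{first} vertex, all pending reductions are absorbed into the inductive hypothesis, so any residual comparison is between fully reduced linear combinations of symbols, where identities among the values $\Phi_\sigma$ (antisymmetry, commutativity, Proposition~\ref{arnoldgen}) are legitimate tools. The paper cannot do this: it compares orderings differing by a transposition that may occur at the \emph{start} of the sequence, with further reductions still pending, so equality of counts is not enough and it must prove simple equivalence of the underlying lists (Lemma~\ref{threelistlemma}, via the cycle combinatorics of Lemma~\ref{combolemma}) --- the paper explicitly flags this distinction. Your non-adjacent case is also essentially right: the two double reductions agree term by term, the only interaction being a shared neighbor $w$, where the two orders produce $(\sigma_v)(\sigma_{v'})\mu_w$ versus $(\sigma_{v'})(\sigma_v)\mu_w$, which are equivalent symbols.

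The gap is in the crux, the adjacent case. The step ``use the inductive hypothesis to reduce every edge not meeting $v$ or $v'$ first'' is not available to you: in $\Psi(\rho_v G)$ the reduction at $v$ comes first \emph{by definition}, and moving far-away reductions in front of $v$ is an instance of the very statement being proven for $G$, not of the inductive hypothesis (which only governs the graphs $G_{v,e}$, i.e.\ reductions occurring \emph{after} $v$). Moreover, reduction does not ``collapse a subtree hanging off $v'$ to a symbol-labeled leaf'': $\rho$ at an internal vertex of that subtree sums over \emph{all} incident edges, including the one pointing back toward $v'$, entangling the subtree with the center; only leaf-by-leaf reductions collapse cleanly, and those must still come after the first vertex. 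So the clean double-star picture never materializes inside either $\Psi(\rho_v G)$ or $\Psi(\rho_{v'}G)$. The repair stays within your framework but runs differently: your non-adjacent case gives transitivity, so for adjacent $v, v'$ you may chain $\Psi(\rho_v G)=\Psi(\rho_u G)=\Psi(\rho_{v'}G)$ through any $u$ adjacent to neither; if no such $u$ exists, acyclicity forces $G$ to \emph{be} a double star, and if both centers carry leaves you can chain through those leaves instead. The only irreducible case is a star, where both orderings do reduce completely and the comparison of center-first with leaf-first is exactly one instance of Proposition~\ref{arnoldgen} applied to the leaf labels together with the center label --- so your Leibniz instinct is correct, but for stars, not general double stars (for a double star with leaves on both centers the two sides are not related by a single instance of the relation). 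Finally, your treatment of validity needs care: the transitivity chain requires the intermediate first vertices to admit valid reductions, and your proposed convention of ``discarding'' invalid terms contradicts the paper's, under which a reduction with any invalid term is undefined.
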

 
 In light of this theorem we shorten $\Phi_{\rho_V G}$ to just $\Phi_G$.
 
 In our example considering two different reductions of $G =  \graphpp{a}{b}{c}$ above, 
 this theorem says that $- \Phi_{(a(b)) c} + \Phi_{( a)(b) c} = - \Phi_{(a) b (c)}$,  which follows from the antisymmetry
 and Arnold identities.  Reduction of graphs through different vertex orders 
 thus gives a way of producing relations between letter-linking invariants.
  
 The following combinatorics is at the heart of the proof of Theorem~\ref{graph-symbol}
 
\begin{definition} \label{interleavingdef}
Recall that a {\bf cycle} of length $n$ in a set $S$ is an element of the quotient of $S^{\times n}$ by the cyclic group of order $n$.

Let $S$ be a signed,  partially ordered set with disjoint subsets $A$ and $B$.  
We say a cycle in $S$ is 
\begin{itemize}
\item  {\bf alternating} if consecutive terms have opposite  signs, 
\item  {\bf interleaving} (of $A$ and $B$) if it alternates between two elements of $A$, followed by two elements of $B$, etc.
\end{itemize}

We say a cycle {\bf crosses over} some element $c$, which is not in $A$ or $B$ and is ordered with respect to all of their elements, 
whenever $c$ occurs between consecutive terms in the sequence.

A crossing over is {\bf homogeneous} if the consecutive terms are both in $A$ or both in $B$, or {\bf heterogeneous} otherwise.
The {\bf sign} of such a crossing over is the sign of the term in the sequence which is less than $c$ in the total ordering
(irregardless of whether that term was earlier or later in the sequence).
\end{definition}

For example, let $S$ be the set of integers with standard order, with $A$ being the  integers less than $5$ and $B$ those greater than $5$, 
and  sign function which assigns $-1$ to
odd numbers   and $+1$ to even numbers, so being alternating means to alternate between even and odd.    The cycle $3 \to 4 \to  7 \to 8  (\to 3)$
crosses over $c=5$ twice, from $4 \to 7$ which has sign $+1$ since $4$ is even, and from $8 \to 3$, which has sign $-1$ since $3$ is odd.
Both of these crossings are heterogeneous, as are any which can occur for this $S, A, B$ and $c$.

\begin{lemma} \label{combolemma}
Let $S$, $A$ and $B$ be as above.  For any alternating, interleaving cycle  
 and any $c \notin A,B$ which is ordered with respect to them, the signed count of
heterogenous crossings over $c$ and that of homogeneous crossings are equal. 
\end{lemma}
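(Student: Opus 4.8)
The plan is to reduce the lemma to a telescoping sum around the cycle by encoding all the relevant data numerically. First I would fix a representative $s_1, \ldots, s_N$ of the cyclic sequence; since crossings, their signs, and the homogeneous/heterogeneous dichotomy are all invariant under cyclic rotation, this choice is harmless. The interleaving hypothesis says the terms occur in consecutive blocks of two, alternating between $A$ and $B$, so the number of blocks is even (whence $N$ is a multiple of four) and, crucially, the consecutive pairs $(s_i, s_{i+1})$ strictly alternate between within-block pairs (both endpoints in $A$ or both in $B$, the only source of homogeneous crossings) and between-block pairs (one endpoint in each, hence heterogeneous). With a suitable choice of starting point the homogeneous pairs are exactly the odd-indexed ones and the heterogeneous pairs the even-indexed ones.

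Next I would attach two numerical labels to each term: the sign $\epsilon_i = \pm 1$ of $s_i$, and an indicator $\lambda_i \in \{0,1\}$ recording whether $s_i < c$ (call such terms \emph{low}), which is well defined since $c$ is comparable to every element of $A \cup B$ and differs from all of them. A consecutive pair $(s_i, s_{i+1})$ crosses over $c$ precisely when exactly one endpoint is low, and by definition the sign of the crossing is the sign of that low endpoint. This packages the signed contribution of each pair as the single expression
\[
V_i = \epsilon_i \lambda_i (1 - \lambda_{i+1}) + \epsilon_{i+1} \lambda_{i+1}(1 - \lambda_i),
\]
which equals the sign of the low endpoint when the pair crosses and vanishes otherwise. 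The quantity I must prove is zero is then $W = \sum_i (-1)^i V_i$, namely the signed count of heterogeneous crossings minus that of homogeneous ones.

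The decisive step is to exploit the alternating hypothesis $\epsilon_{i+1} = -\epsilon_i$, equivalently $\epsilon_i = \epsilon_1 (-1)^{i-1}$, which yields the two constant identities $(-1)^i \epsilon_i = -\epsilon_1$ and $(-1)^i \epsilon_{i+1} = \epsilon_1$, both independent of $i$. Substituting these into $W$ collapses all the sign bookkeeping (the quadratic $\lambda_i\lambda_{i+1}$ terms cancel) and leaves
\[
W = \epsilon_1 \sum_i \bigl( \lambda_{i+1} - \lambda_i \bigr),
\]
which telescopes to zero because the indices run over a full cycle. I expect the only genuine obstacle to be organizational rather than mathematical: one must check at the outset that the block-of-two structure forces homogeneous and heterogeneous pairs to alternate in parity around the cycle, so that the weight $(-1)^i$ correctly separates the two counts, and one must confirm that the prescribed crossing sign (``sign of the lower term'') is exactly what $V_i$ records. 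Once those two bookkeeping points are pinned down, the interplay between the alternating-sign identity and the telescoping cancellation does all the work, and the argument reads as the discrete analogue of the vanishing of the count of a boundary.
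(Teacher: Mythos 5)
Your proposal is correct, and it takes a genuinely different route from the paper's proof. The paper argues locally: it fixes a representative, takes a crossing (say homogeneous, contributing $\epsilon$), and analyzes the gap $k$ to the \emph{next} crossing; since consecutive pairs alternate between homogeneous and heterogeneous (interleaving) and signs alternate, the parity of $k$ (stated mod $4$ in the paper, though only parity matters) forces that next crossing to contribute either $+\epsilon$ to the heterogeneous count or $-\epsilon$ to the homogeneous count, and iterating this pairing around the cycle balances the two signed counts. You instead run a single global computation: encoding lowness by indicators $\lambda_i$ and signs by $\epsilon_i$, the signed contribution of the pair $(s_i,s_{i+1})$ is exactly $V_i=\epsilon_i\lambda_i(1-\lambda_{i+1})+\epsilon_{i+1}\lambda_{i+1}(1-\lambda_i)$, and the alternating identities $(-1)^i\epsilon_i=-\epsilon_1$, $(-1)^i\epsilon_{i+1}=\epsilon_1$ collapse $\sum_i(-1)^iV_i$ to $\epsilon_1\sum_i(\lambda_{i+1}-\lambda_i)=0$ by telescoping. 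Both arguments rest on the same two structural facts, but they use them differently: in your proof the interleaving hypothesis is needed only to identify the weight $(-1)^i$ with the homogeneous/heterogeneous dichotomy, while the vanishing itself uses only the alternating hypothesis. What your version buys is completeness and robustness: it makes explicit the assembly step that the paper leaves implicit (how the relation between consecutive crossings globalizes around the cycle, including runs of several homogeneous crossings and the wrap-around at $i=N$, where evenness of $N$ is what makes the substitution and telescoping consistent), and it handles the crossing-free case vacuously. What the paper's version buys is geometric transparency: it directly expresses the intuition that each time the cycle passes over $c$ it must pass back, which is the picture that drives the application to interleaved cobounding intervals in Lemma~\ref{threelistlemma}.
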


\begin{proof}
Fix a sequence representative for the cycle $\{ s_j \}$, and suppose the sequence first crosses over $c$ between $s_i$
and $s_{i+1}$ which are both in $A$ or both in $B$, contributing $\epsilon = \pm 1$ to the count of homogeneous crossings.  
Let the next crossing over $c$ be in $k$ steps (that is, between $s_{i+k}$ and $s_{i+k+1}$).  Since the sequence
is alternating and interleaving, consider $k$ modulo four.  
\begin{itemize}
\item If $k = 1 $ or $3 \mod 4$ the next crossing  contributes $\epsilon$ to the count of heterogeneous crossings.
\item If $k = 0$ or $2 \mod 4$ the next crossing  contributes $- \epsilon$ to the count of homogenous crossings.
\end{itemize}  
In all cases the signed count of
heterogenous crossings and that of homogeneous crossings are equal.

If the first crossing is heterogenous, the argument is the same, with the roles of heterogeneous and homogeneous interchanged.
\end{proof}

We apply this combinatorics to lists which arise in different reductions of symbol graphs.  The general equivalence needed is the following. 

\begin{lemma} \label{threelistlemma}
Let $L_a$, $L_b$ and $L_c$ be  lists of distinct letters $a,b,c$ in a word $w$
with $d^{-1} (L_a \wedge d^{-1} L_b)$ and $d^{-1} (d^{-1} L_a \wedge  L_b)$ defined.
 Then $L_c \wedge d^{-1} L_a \wedge d^{-1}L_b$ is simply equivalent to the union of  
$L_c \wedge d^{-1} (L_a \wedge d^{-1} L_b)$ and  $L_c \wedge d^{-1} (d^{-1} L_a \wedge  L_b)$.
\end{lemma}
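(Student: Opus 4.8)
The plan is to prove the simple equivalence by comparing associated functions, which it suffices to do one occurrence of $c$ at a time. The key preliminary observation is that all three lists are $L_c$ intersected against a cobounding of one and the same list of $a$'s and $b$'s. Writing $M = d^{-1}L_a \cap d^{-1}L_b$ for the set $\{I \cap J\}$ of pairwise intersections of the chosen intervals, the associativity of $\cap$ identifies the left-hand side $L_c \wedge d^{-1}L_a \wedge d^{-1}L_b$ with $L_c \wedge M$. Applying the Leibniz Rule with $n = 2$, together with the fact that the boundary of a cobounding recovers the list it cobounds, gives $\partial M = (L_a \wedge d^{-1}L_b) \cup (d^{-1}L_a \wedge L_b)$. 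Thus $M$ is itself a cobounding of the non-homogeneous list $N_a \cup N_b$, where $N_a = L_a \wedge d^{-1}L_b$ and $N_b = d^{-1}L_a \wedge L_b$.

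First I would check that the right-hand side is defined: the hypothesis that $d^{-1}(L_a \wedge d^{-1}L_b)$ and $d^{-1}(d^{-1}L_a \wedge L_b)$ exist is exactly the statement that $N_a$ and $N_b$ each have vanishing count, so each admits a homogeneous cobounding $d^{-1}N_a$ and $d^{-1}N_b$. The right-hand side is then $L_c \wedge (d^{-1}N_a \cup d^{-1}N_b)$, the intersection of $L_c$ with a \emph{separated} cobounding of $N_a \cup N_b$, every interval of which has both endpoints on the same letter. The left-hand side $L_c \wedge M$ is the intersection of $L_c$ with a \emph{mixed} cobounding of the same list: the intervals $I \cap J$ may be heterogeneous, with one $a$-endpoint and one $b$-endpoint, precisely when $I$ and $J$ are interleaved. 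So the whole content of the lemma is that replacing the mixed cobounding $M$ by a separated one does not change the $c$-count up to simple equivalence, and the invariant to track is the signed number of cobounding intervals lying over the fixed occurrence of $c$.

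This reconciliation is where Lemma~\ref{combolemma} enters, and carrying it out precisely is the step I expect to be the main obstacle. For the fixed $c$ I would organize the four endpoints of each interacting pair $(I,J)$ into a length-four cycle on the $a$- and $b$-endpoints that is alternating in total sign, since each interval runs from its positive to its negative endpoint, and interleaving for $A$ the $a$-endpoints and $B$ the $b$-endpoints. Lemma~\ref{combolemma} then equates the signed count of homogeneous crossings over $c$, which record $c$ lying between two endpoints of a single interval, with the signed count of heterogeneous crossings, which record $c$ separating an $a$-endpoint from a $b$-endpoint. The delicate part is to arrange these cycles so that the homogeneous/heterogeneous dichotomy matches, with correct orientation signs, the containment data defining the intervals of $M$ on one side and the boundary data defining the reduced lists $N_a$ and $N_b$ on the other; the product structure of the left-hand count means the matching must be set up with some care rather than pair by pair. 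Having secured this, I would invoke Proposition~\ref{indepdinv} to absorb the residual dependence on the cobounding choices. Matching the crossing counts shows the associated functions agree at $c$, and since $c$ was arbitrary the two lists are simply equivalent.
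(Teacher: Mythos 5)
Your setup matches the intended argument: identifying the left-hand side with $L_c \wedge M$ where $M$ is the family of pairwise intersections, using the Leibniz rule to see $\partial M = N_a \cup N_b$, so that both sides are intersections of $L_c$ against two different coboundings of the same list, and then aiming to reconcile the ``mixed'' and ``separated'' coboundings via Lemma~\ref{combolemma}. Disposing of the contained pairs $I \subset J$ separately (by choosing $d^{-1}N_a$ to include $I$ itself, so that the interval appears identically on both sides) is also the right move, though you only gesture at it. But the step you yourself flag as ``the main obstacle'' is a genuine gap, and the structure you propose there is wrong: the four endpoints of a single interacting pair $(I,J)$ do \emph{not} form the relevant cycle. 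A length-four cycle built from one pair never sees the separated cobounding intervals $d^{-1}N_a$ and $d^{-1}N_b$ at all --- yet those intervals are exactly what must supply the homogeneous crossings --- and it cannot see them, because a cobounding of $N_a$ is free to pair an $a$-endpoint produced by the pair $(I_1,J_1)$ with an $a$-endpoint produced by a completely different pair $(I_2,J_2)$. So no pair-by-pair (or per-pair, suitably decorated) bookkeeping can work; the combinatorics is inherently global.

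The missing idea is to build the cycles out of the union of the \emph{two} interval families on the common endpoint set. Each element of $N_a \cup N_b$ arising from an interleaved pair is the boundary point of exactly one heterogeneous interval of $M$ (namely the relevant $I \cap J$, which has one $a$- and one $b$-endpoint) and of exactly one homogeneous interval of $d^{-1}N_a \cup d^{-1}N_b$. The union of these two families is therefore a $2$-regular graph on the endpoint set, hence a disjoint union of cycles in which the edge types alternate: heterogeneous, homogeneous, heterogeneous, \ldots. Edge-type alternation forces the vertex pattern $a,b,b,a,a,b,b,\ldots$, i.e.\ the interleaving condition of Definition~\ref{interleavingdef} with $A$ the $a$-endpoints and $B$ the $b$-endpoints, and sign-alternation holds because every oriented interval, of either family, joins endpoints of opposite total sign. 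With these (possibly long) cycles in hand, a crossing of a fixed occurrence of $c$ over a heterogeneous edge is precisely a contribution to $L_c \wedge M$, a crossing over a homogeneous edge is precisely a contribution to $L_c \wedge (d^{-1}N_a \cup d^{-1}N_b)$, and the sign conventions match because the sign of a crossing and the sign of an intersection are both read off the left-hand (initial) endpoint; Lemma~\ref{combolemma} then gives equality of the two signed counts, and Proposition~\ref{indepdinv} removes the dependence on choices. Without this global cycle construction your invocation of Lemma~\ref{combolemma} has no well-defined cycles to apply to, so the proof as proposed does not go through.
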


At the level of counts this follows from antisymmetry and Arnold identities, 
but we  need  simple equivalence to have equality of further derived  counts.

\begin{proof}
We  fix choices of $d^{-1} L_a$ and $d^{-1}L_b$.  Let  
\begin{itemize}
\item $B_{(a)(b)}$ denote the union of intersections of intervals in
$d^{-1} L_a$ and $d^{-1}L_b$,
\item $B_{((a)b)}$ be alternate notation for $d^{-1} (d^{-1} L_a \wedge  L_b)$, and
\item $B_{(a(b))}$ be alternate notation for $d^{-1}  (L_a \wedge d^{-1} L_b)$.
\end{itemize}
Our lemma states that the intersection of $L_c$ with $B_{(a)(b)}$ is simply equivalent to its intersection with $B_{((a)b)} \cup B_{(a(b))}$.
A key  observation is that the boundaries of the 
intervals in $B_{(a)(b)}$ and  those in $B_{((a)b)} \cup B_{(a(b))}$ coincide.

We first quickly address the case of containment of intervals in the $L_a$ and $L_b$ coboundings.
If some $I \in d^{-1} L_a$  is contained in some $J \in d^{-1} L_b$ then $I$ can be chosen
in $B_{(a(b))}$, 
and since $I \cap J = I$ it occurs in $B_{(a)(b)}$.  
So any intersections of $L_c$
with $I$ would be added equally for the two sets named.  

We thus focus on intersections of interleaving intervals from $d^{-1} L_a$ and $d^{-1} L_b$, which thus have one boundary point in $L_a$
and one in $L_b$, of opposite total signs.  With an eye to  applying
Lemma~\ref{combolemma}, set $A$ to be the collection with multiplicity
of all the elements of $L_a$  which are  boundaries of interleaving intervals from 
$d^{-1} L_a$ and $d^{-1} L_b$.  Let  $B$ the elements of $L_b$  
which are such 
boundaries, and $S$ be their union along with $L_c$.  Order  $S$ using the ordering of letters of $w$.  

By construction, points in $A$ and $B$ are all the boundaries of one interval from $B_{(a)(b)}$. 
They are also
the boundary of an interval in $B_{(a(b))}$ or $B_{((a)b)}$.  
Thus the union of interleaving
intervals from $B_{(a)(b)}$ and $B_{(a(b))} \cup B_{((a)b)}$ naturally define  cycles, with each boundary point connected to two edges, 
and each edge proceeding from one boundary point to the next.  
Starting with any point, following a cycle
will define an alternating, interleaving sequence.  
The heterogeneous crossings of a point in $L_c$ with this sequence are exactly contributions to  
$L_c \wedge B_{(a)(b)}$ 
while the homogeneous crossings are contributions to $L_c \wedge (B_{((a)b)} \cup B_{(a(b))})$.
By  Lemma~\ref{combolemma} these are equal,
from which we deduce this lemma.
\end{proof}

We can now prove that reduction of symbol graphs to symbols gives well-defined letter-linking invariants.

\begin{proof}[Proof of Theorem~\ref{graph-symbol}]
Let $V = v_1, \ldots, v_{m-1}, v_m$ be a list of $m-1$ vertices at which a symbol graph in $\gs_{n,m}$ is to be reduced, followed by the remaining 
vertex $v_m$.  Any two such lists differ by a sequence of transpositions, so it suffices to consider a $V'$ which differs by a single transposition.
Because the lists are the same up until the transposition, and thus will produce the same reductions up until that point,
 it suffices to consider a transposition of $v_1$ and $v_{2}$.  

If $v_1$ and $v_{2}$ are not connected by an edge then the resulting reductions will be the same, so we assume
there is an edge $e$ between them, oriented say away from $v_1$ towards $v_2$.  
Let $\sigma$ and $\tau$ be the symbols at $v_1$ and $v_2$ respectively.   If this edge is the last one in the graph, then the resulting reductions are
equivalent by antisymmetry, so we consider the other cases when reduction occurs at both vertices.
Each term in the linear combination of the reduction of $G$ at $v_1$ and then $v_2$ correspond to a choice of edge incident to $v_1$ and an
edge incident to $v_2$ in the quotient.  If neither of these edges is $e$ this term will be the same as the 
corresponding term in the reduction at $v_2$ and then $v_1$.    

Thus we consider reduction at $e$ along with 
a second edge $f$ incident to $v_1$, say oriented away from $v_1$, connected to some vertex $w$ labeled by symbol $\mu$.  There are
two terms in the reduction at $v_1$ and then $v_2$ which correspond to contraction of $e$ and $f$, namely $e$ could come first and then $f$,
resulting in the labeling symbol $\mu ((\sigma) \tau)$ at the resulting vertex in the quotient, 
or $f$ could come first and then $e$, resulting in $-\mu (\sigma) (\tau)$.  There is one term in the
reduction at $v_2$ and then $v_1$ as  $e$ must first be contracted then $f$, giving a labeling symbol $\mu (\sigma(\tau))$.

As the reduction of these terms will be identical after these contractions,
it suffices to know for any $w$ the union of the lists $\Lambda_{\mu ((\sigma) \tau)}(w)$ and $\Lambda_{\mu (\sigma) (\tau)}(w)$ 
with its orientations reversed is simply equivalent
to $\Lambda_{\mu (\sigma(\tau))}(w)$.  Using the group structure on simple equivalence classes of lists, it suffices
to have   $\Lambda_{\mu (\sigma) (\tau)}(w)$ simply equivalent to  
$\Lambda_{\mu ((\sigma) \tau)}(w) \cup \Lambda_{\mu (\sigma(\tau))}(w)$.  
But this is the content Lemma~\ref{threelistlemma}, setting $L_c =\Lambda_\mu (w)$,
$L_a = \Lambda_\sigma(w)$ and $L_b = \Lambda_\tau(w)$.
\end{proof}

To make full use of this reduction, we need the following simple  combinatorial fact.

\begin{proposition}\label{reductionsurjective}
For any symbol $\sigma$ there is a graph $G$ and sequence of vertices $V$ such that $\rho_V G = \sigma$.
\end{proposition}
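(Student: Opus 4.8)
The plan is to induct on the depth $n$ of $\sigma$ and to realize $\sigma$ by a tree $G$ all of whose vertices are single generators, i.e.\ $G \in \gs_{0,n}$, together with a reduction order that only ever peels off leaves. The guiding observation is that when one reduces at a vertex $v$ of current degree one, the defining sum $\sum_{v \in \partial e} or_v(e)\, G_{v,e}$ collapses to a single term; hence a reduction sequence that reduces exclusively at leaves returns a single symbol rather than a linear combination, which is exactly what is required to hit $\sigma$ on the nose. To make the induction close, I would carry the strengthened hypothesis that every valid symbol $\sigma$ is realized by a single-generator tree $G_\sigma$ with a distinguished root $r_\sigma$ labeled by the free letter of $\sigma$, together with an ordering $V_\sigma$ of all of its non-root vertices, such that each successive reduction occurs at a current leaf and $\rho_{V_\sigma} G_\sigma$ is the single symbol $\sigma$ sitting at $r_\sigma$.

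For the base case $n = 0$ one takes $\sigma = \ell$, the one-vertex graph, and the empty sequence. For the inductive step, I would write the outermost level of $\sigma$ as $(\sigma_1)(\sigma_2)\cdots(\sigma_k)\ell$, where $\ell$ is the free letter and each $\sigma_i$ is a valid symbol of strictly smaller depth whose free letter differs from $\ell$, the latter being exactly the validity of $\sigma$ at its outermost level. Applying the hypothesis to each $\sigma_i$ yields rooted trees $G_i$ with roots $r_i$ and sequences $V_i$; I then build $G_\sigma$ by introducing a new root $r$ labeled $\ell$ and attaching each $G_i$ by an edge $r_i \to r$ oriented toward $r$, and set $V_\sigma = (V_1, \ldots, V_k, r_k, r_{k-1}, \ldots, r_1)$. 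Reducing each subtree internally first brings each $r_i$ up to the label $\sigma_i$; reducing then at the subtree-roots from $r_k$ down to $r_1$ prepends $(\sigma_k), \ldots, (\sigma_1)$ in turn. Since reducing at a tail (an edge oriented away from the reduced vertex) carries sign $+1$ and produces $(\sigma_i)\tau$ by prepending, the star collapses to $(\sigma_1)(\sigma_2)\cdots(\sigma_k)\ell = \sigma$ with coefficient $+1$.

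Two points demand care, and I expect the bookkeeping around them, rather than any hard idea, to be the bulk of the work. First, I must confirm that every reduction listed in $V_\sigma$ is genuinely single-term: the internal reductions in $V_i$ act on leaves of $G_i$, and since the only vertex of $G_i$ carrying an edge out of the subtree is $r_i \notin V_i$, these remain current leaves of $G_\sigma$; once the subtrees have collapsed the graph is a star, in which each $r_i$ is a leaf. Second, I must check that $G_\sigma$ is an honest symbol graph and that every intermediate relabeling stays a valid symbol: the edges internal to each $G_i$ are valid by induction, the new edges $r_i \to r$ join distinct free letters precisely because $\sigma$ is valid at its top level, and each prepending $(\sigma_i)\tau$ preserves distinctness of free letters at neighboring levels for the same reason. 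The main obstacle is therefore conceptual rather than computational: it is recognizing that leaf-by-leaf reduction is what converts the a priori linear-combination-valued operator $\rho_V$ into a single symbol, and then isolating the strengthened inductive statement — root, leaf order, single term — under which the induction actually closes.
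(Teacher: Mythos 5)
Your proof is correct and is essentially the paper's own argument: the tree you build inductively is exactly the paper's containment-poset graph (one vertex per parenthesis pair plus a root for the whole symbol, each labeled by its free letter, edges oriented child-to-parent), and your leaf-first ordering is precisely a reduction order compatible with containment. Your version simply makes explicit what the paper leaves implicit — that leaf reductions are single-term, that signs are $+1$, and that validity is preserved — so it is the same approach, carried out in more detail.
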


\begin{proof}
One such graph $G$ essentially encodes the containment poset, with a vertex for every pair of parentheses along with a vertex for the entire symbol.  Each  vertex is
labeled by   the free letter for the corresponding pair of parentheses, or respectively the free letter for the symbol.  There is an edge from the vertex
of a set of parentheses to the set of parentheses which immediately contains it, or respectively to the vertex for the entire symbol for the parentheses
not contained in any others.  By reducing at any list  of vertices whose order is compatible with  the containment order of parentheses,
we obtain $\sigma$ as the reduction.
\end{proof}

\subsection{The configuration pairing}

The  lower central series filtration of a group is universal among filtrations whose subquotients form a Lie algebra.  In the case
of free groups, the resulting Lie algebra is free.  In \cite{SiWa11} the second author and Ben Walter developed an approach to free Lie 
algebras and their linear duals, starting with an operadic perspective.  We will connect with this approach to apply those results.  

\begin{definition}
Fix a set $x_1, \ldots, x_n$ of generators of $F_n$.  
Let $UC(n)$ denote the set of commutators in which each generator occurs a unique time, and 
let $\Lie(n)$ denote the submodule of $\gamma_n F_n/ \gamma_{n+1} F_n$ generated by $UC(n)$.
\end{definition}

 Combinatorially, these can be represented by trees.
 
 \begin{definition}
 Let $\tr(n)$ denote the set of isotopy classes of rooted,
half-planar, uni-trivalent trees with leaves labeled by integers $1\ldots n$.  

Represent a commutator  $w \in UC(n)$ by an element  $T_w \in \tr(n)$, starting with a one-edge tree with leaf label $i$ as  $T_{x_i}$.  
Then define $T_{[w,v]}$ to the tree formed by taking 
$T_w$ and $T_v$ and grafting them to a single (rooted) trivalent vertex, with $T_w$ on the left.
\end{definition}

Then $\Lie(n)$ is isomorphic to the quotient of $\Z \tr(n)$ by linear combinations corresponding to antisymmetry and Jacobi identities.

\begin{definition}
Let $\gr(n)$ denote the subset of $\gs_{0,n}$ given by acyclic oriented graphs in which each generator occurs exactly once.
\end{definition}

Thus $\gr(n)$ is the set of Eil graphs on $n$ vertices whose vertices are labeled by generators, which we indicate by using label $i$ in place of $x_i$.

We now develop the pairing between $\gr(n)$ and $\tr(n)$ first 
developed in \cite{Sinh18, Sinh09.2}, arising in the study of configuration spaces, which descend to a perfect
pairing between the Lie operad and Eil co-operad.

\begin{definition}\label{D:confpair}

Let the height of a vertex in a tree be the number of edges between that vertex
and the root, and let $gcv(i,j)$ be the vertex of greatest height which lies beneath
leaves labelled $i$ and $j$.

Given $G \in \gr(n)$ and $T \in \tr(n)$, define the map 
$$\beta_{G,T}:\bigl\{\text{edges of } G\bigr\} \longrightarrow 
\bigl\{\text{internal vertices of } T\bigr\}$$ 
by sending the edge $\linep{i}{j}$ in $G$ to the vertex $gcv(i,j)$ in $T$. 
The configuration pairing of
$G$ and $T$ is 
$$\bigl\langle G,\, T\bigr\rangle = 
  \begin{cases} \displaystyle
    \prod_{\substack{e\text{ an edge} \\\text{of }G}} \!\!\!\! 
        \text{sgn}\bigl(\beta_{G,T}(e)\bigr)
                & \text{if $\beta$ is surjective,} \\
         \qquad 0  & \text{otherwise},
\end{cases}$$
where   
$\text{sgn}\left(
  \beta_{G,T}\Bigl(\begin{aligned}\linep{i}{j}\end{aligned}\Bigr)\right) = 1$ 
   if leaf $i$
is to the left of leaf $j$ in the half-planar embedding of $T$;  
otherwise it is $-1$.  
\end{definition}

The first main result of this section is that this pairing also governs our letter-linking invariants in the case
where the symbols and commutators have each generator occurring only once.

\begin{theorem}\label{graph-symbol-pairing}
Let $G \in \gr(n)$ and $w \in UC(n)$.
Then $\Phi_G(w) =  \langle G, T_w \rangle$.
\end{theorem}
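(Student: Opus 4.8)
The plan is to argue by induction on $n$, using the half-planar bracketing recorded by $T_w$. For the base case $n=1$ both sides equal $1$: the only graph in $\gr(1)$ is a single vertex with no edges, the pairing of Definition~\ref{D:confpair} is the empty product, and $\Phi_i(x_i)=1$. For the inductive step I would write $w=[u,v]$, where the root of $T_w$ separates the leaves into the set $A$ of labels occurring in the left subtree $T_u$ and the set $B$ occurring in the right subtree $T_v$; thus $u\in UC(A)$ and $v\in UC(B)$ are commutators in disjoint generating sets. I would then show that both $\langle G,T_w\rangle$ and $\Phi_G(w)$ satisfy the same factorization over this splitting and invoke the inductive hypothesis on the induced subgraphs $G|_A$ and $G|_B$ paired with $T_u$ and $T_v$.

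On the pairing side the recursion is essentially immediate from Definition~\ref{D:confpair}. The root $r$ of $T_w$ equals $gcv(i,j)$ precisely when $i$ and $j$ lie in different subtrees, so the edges of $G$ sent to $r$ by $\beta_{G,T_w}$ are exactly those crossing between $A$ and $B$, while edges internal to $A$ (resp. $B$) are sent into the internal vertices of $T_u$ (resp. $T_v$) via the $gcv$ computed inside the subtree. Hence $\beta_{G,T_w}$ is surjective iff at least one edge crosses and $\beta_{G|_A,T_u}$, $\beta_{G|_B,T_v}$ are both surjective, and the sign factors as a product, giving
\[
\langle G,T_w\rangle = \Big(\textstyle\prod_{e\text{ crossing}}\sgn(e)\Big)\,\langle G|_A,T_u\rangle\,\langle G|_B,T_v\rangle,
\]
where $\sgn(e)=+1$ when $e$ is oriented from $A$ to $B$, the right-hand side read as $0$ when no edge crosses.

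It remains to establish the matching identity $\Phi_G(w)=\big(\prod_{e\text{ crossing}}\sgn(e)\big)\,\Phi_{G|_A}(u)\,\Phi_{G|_B}(v)$. Here I would use Theorem~\ref{graph-symbol} to reduce $G$ to a signed sum of symbols along a convenient ordering of vertices, and evaluate each resulting $\Phi_\sigma([u,v])$ by iterating the cobracket formula of Proposition~\ref{cobracket1}. The crucial simplification is that $u$ is a word in the letters $A$ and $v$ a word in the disjoint letters $B$, so a symbol built from $A$-letters has vanishing count on $v$ and vice versa; thus in each application of $\Phi_{(\sigma')\tau'}[u,v]=\Phi_{\sigma'}(u)\Phi_{\tau'}(v)-\Phi_{\tau'}(u)\Phi_{\sigma'}(v)$ only the term carrying the $A$-part on $u$ and the $B$-part on $v$ survives. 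I expect the main obstacle to be the bookkeeping: reduction does not separate crossing from internal edges vertex by vertex, so the reduced symbols genuinely interleave $A$- and $B$-letters, and one must show that the signed sum of their evaluations telescopes to the single product $\Phi_{G|_A}(u)\Phi_{G|_B}(v)$ with precisely the crossing-edge sign $\prod_e\sgn(e)$. Order-independence from Theorem~\ref{graph-symbol} is what lets me choose an ordering that peels off the internal $A$- and $B$-edges first and contracts the crossing edges last, so that the surviving top-level bracket in each symbol is exactly the $A$-versus-$B$ split; matching this against the pairing recursion and applying the inductive hypothesis then completes the proof.
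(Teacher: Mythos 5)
Your overall strategy --- induct on $n$ by splitting $w=[u,v]$ at the root of $T_w$, choose a reduction order for $G$ that contracts the crossing edge last, and use disjointness of the letter sets $A$ and $B$ to kill cross terms --- is essentially the paper's argument in the case $\langle G,T_w\rangle=\pm1$. In that case your ``bookkeeping obstacle'' evaporates exactly as you predict: when precisely one edge $e$ crosses between $A$ and $B$, every vertex other than the two endpoints of $e$ is incident only to internal edges, so reducing at those vertices first produces only pure $A$-symbols and pure $B$-symbols, the final contraction yields terms $\pm(\sigma_1)\sigma_2$, and a single application of Proposition~\ref{cobracket1} together with the vanishing of $B$-symbols on $u$ and of $A$-symbols on $v$ gives the product formula with the orientation sign of $e$. (The paper re-derives that product formula directly at the level of lists and coboundings rather than citing Proposition~\ref{cobracket1}; your shortcut is legitimate, since all the relevant counts are defined by Theorem~\ref{definedonlcs}.) The case of one crossing edge with a vanishing restricted pairing is then covered by your induction.

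The genuine gap is the degenerate case in which two or more edges of $G$ cross between $A$ and $B$ --- equivalently at least two edges map to the root, so $\langle G,T_w\rangle=0$. There your factorization identity is not even well formed: $G|_A$ or $G|_B$ is then disconnected, and $\Phi_{G|_A}$ is undefined, since symbol graphs are connected by definition and $\Phi$ is only defined through reductions of such graphs. More importantly, the ordering trick fails: reduction is indexed by vertices, not edges, and $\rho_v$ sums over \emph{all} edges incident to $v$. Take $A=\{1,2\}$, $B=\{3,4\}$, $w=[[x_1,x_2],[x_3,x_4]]$, and let $G$ have edges $1\to 2$, $2\to 3$, $1\to 4$. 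The unique internal edge $1\to 2$ can only be contracted by reducing at vertex $1$ or vertex $2$, each of which is also incident to a crossing edge, so every admissible vertex order produces terms in which a crossing edge is contracted before the internal edge, and the resulting symbols genuinely mix $A$- and $B$-letters. No telescoping to a product is available --- nor should it be, since the target value is $0$, not a product of nonzero factors. The paper closes this case with a separate, non-inductive argument: it picks two edges $e,f$ with $\beta_{G,T_w}(e)=\beta_{G,T_w}(f)$, reduces so that their images are the last two remaining edges, obtaining terms of the form $\pm(\sigma)\tau(\mu)$, and then observes that the coboundings of the free letters of $\sigma$ and $\mu$ can be chosen inside the sub-commutator containing them, hence disjoint from every occurrence of the free letter of $\tau$, forcing the count to vanish. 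Your proposal needs this disjoint-cobounding idea (or some substitute for it) to handle the vanishing case; as written it proves the theorem only when the pairing is $\pm1$ or when exactly one edge crosses the root.
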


\begin{proof}
If $\langle G, T_w \rangle = \pm 1$ we argue inductively, at first not tracking signs.  
The theorem is immediate when $n=1$.  If $n > 1$, set $w = [w_1, w_2]$.
As $\langle G, T_w \rangle = \pm 1$ then there is a unique edge $e$ of $G$ such that $\beta_{G, T_w}(e) = v$.   Removing $e$ from $G$ 
yields two acyclic graphs $G_1$ and $G_2$, whose vertex labels must coincide with those of $T_1$ and $T_2$ respectively, since any other
edge connecting a vertex with label among those in $T_1$ to a vertex with label from $T_2$ would also have its image under $\beta_{G, T_w}$
equal to $v$.  Moreover, we must have $\langle G_i, T_i \rangle = \pm 1$ for $i = 1,2$, which inductively implies $\Phi_{G_i} (w_i) = \pm 1$.  

Now choose to reduce $G$ so that the vertices of $e$ are the last two and then, say, choose
the vertex in $G_1$ for the last reduction.  In this case, the symbol for $\rho_V G$
will be $(\sigma_1) \sigma_2$, where $\sigma_i$ is the symbol reduction of $G_i$.  Because the generators which occur in $w_1$ and $w_2$
are distinct, $\Lambda_{\sigma_1} (w_1 w_2 {w_1}^{-1} {w_2}^{-1})$ occurs only in the $w_1$ and ${w_1}^{-1}$ sub-words.  The occurrences
of the free letter of $\sigma_1$ will occur in pairs  across $w_1$ and ${w_1}^{-1}$ mapped to each other by the canonical involution,
with a total multiplicity of such pairs of $\pm 1$.  
We cobound according to this choice of pairs.  Similarly $\Lambda_{\sigma_2}(w)$
will only occur in $w_2$ and ${w_2}^{-1}$ sub-words.  Only the occurrences in $w_2$ will intersect the cobounding, and by the inductive
assumption that $\Phi_{G_i} (w_i) = \pm 1$ we have 
 $\Phi_{\pm(\sigma_1) \sigma_2} = \pm 1$.  

We obtain the signed result  by noticing that 
$ \langle G, T_w \rangle = \varepsilon \langle G_1, T_{w_1} \rangle  \langle G_2, T_{w_2} \rangle$,
where $\varepsilon = 1$ if the initial vertex of  $e$ is in $G_1$ or $-1$ if its initial vertex is in $G_2$.  Choose the last vertex for reduction to be the
initial vertex of $e$, so  that the result will be $(\sigma_1) \sigma_2$ if $e$ points from $G_1$ to $G_2$ or $\sigma_1 (\sigma_2)$ otherwise.  The first
case was chosen above, and we now have a signed equality $\Phi_G(w) = \Phi_{G_1} (w_1) \Phi_{G_2} (w_2)$.  In the second case,
it is the elements of $\Lambda_{\sigma_1} ({w_1}^{-1})$ which occur between pairs of $\Lambda_{\sigma_2} (w_2^{\pm 1})$ so we have 
$\Phi_G(w) = - \Phi_{G_1} (w_1) \Phi_{G_2} (w_2)$.  Both cases agree with the inductive formula for  $ \langle G, T_w \rangle$.

If $\langle G, T_w \rangle = 0$, there is a vertex $v$ with $\beta_{G, T_w}(e) = \beta_{G, T_w}(f) = v$ for at least two edges $e$ 
and $f$.  Reduce $G$ so that the images in the quotient of these two edges are the last two edges, 
with remaining symbols $\sigma$, $\tau$, $\mu$.  That is, reduce, up to edge 
orientation, to  $\graphpm{\sigma}{\tau}{\mu}$.  The final reduction can thus be chosen as $\pm (\sigma) \tau (\mu)$.  
Because $\beta_{G, T_w}(e) = \beta_{G, T_w}(f) = v$, the free letters of $\sigma$ and $\mu$ occur in $w$  within 
a commutator which is later commuted with the free letter of $\tau$.  The coboundings of free letters of $\sigma$ and $\mu$ can  
be chosen within this first commutator, and thus 
disjoint from the free letter of $\tau$, implying  $\Phi_G (w) = 0$.
\end{proof}

\subsection{Cofree Lie coalgebras and distinct-vertex graphs} 

The results  of the previous section addressing  symbols and words with distinct generators may be viewed as  ``at the level of operads.''  
We expand our consideration to the (co)free (co)algebras built from
them.  As needed for our work in group theory, we continue to build from a generating set rather than from a vector space.

\begin{definition} 
Let $\Eil(i)$ be the quotient of the span of $\gr(i)$ by antisymmetry and Arnold relations which are shown below. 

Let $\E_n$ be the quotient of the span of Eil graphs of any size with vertices  that are labelled by the generating set $x_1, \cdots, x_n$, 
by antisymmetry and Arnold relations
\begin{align*}
\text{(antisymmetry)}\qquad & \qquad
\begin{xy}                           
  (0,-2)*+UR{\scriptstyle a}="a",    
  (3,3)*+UR{\scriptstyle b}="b",     
  "a";"b"**\dir{-}?>*\dir{>},         
  (1.5,-5),{\ar@{. }@(l,l)(1.5,6)},
  ?!{"a";"a"+/va(210)/}="a1",
  ?!{"a";"a"+/va(240)/}="a2",
  ?!{"a";"a"+/va(270)/}="a3",
  "a";"a1"**\dir{-},  "a";"a2"**\dir{-},  "a";"a3"**\dir{-},
  (1.5,6),{\ar@{. }@(r,r)(1.5,-5)},
  ?!{"b";"b"+/va(90)/}="b1",
  ?!{"b";"b"+/va(30)/}="b2",
  ?!{"b";"b"+/va(60)/}="b3",
  "b";"b1"**\dir{-},  "b";"b2"**\dir{-},  "b";"b3"**\dir{-},
\end{xy}\ =\ \ -  
\begin{xy}                           
  (0,-2)*+UR{\scriptstyle a}="a",    
  (3,3)*+UR{\scriptstyle b}="b",     
  "a";"b"**\dir{-}?<*\dir{<},         
  (1.5,-5),{\ar@{. }@(l,l)(1.5,6)},
  ?!{"a";"a"+/va(210)/}="a1",
  ?!{"a";"a"+/va(240)/}="a2",
  ?!{"a";"a"+/va(270)/}="a3",
  "a";"a1"**\dir{-},  "a";"a2"**\dir{-},  "a";"a3"**\dir{-},
  (1.5,6),{\ar@{. }@(r,r)(1.5,-5)},
  ?!{"b";"b"+/va(90)/}="b1",
  ?!{"b";"b"+/va(30)/}="b2",
  ?!{"b";"b"+/va(60)/}="b3",
  "b";"b1"**\dir{-},  "b";"b2"**\dir{-},  "b";"b3"**\dir{-},
\end{xy} \\
\text{(Arnold)}\qquad & \qquad
\begin{xy}                           
  (0,-2)*+UR{\scriptstyle a}="a",    
  (3,3)*+UR{\scriptstyle b}="b",   
  (6,-2)*+UR{\scriptstyle c}="c",   
  "a";"b"**\dir{-}?>*\dir{>},         
  "b";"c"**\dir{-}?>*\dir{>},         
  (3,-5),{\ar@{. }@(l,l)(3,6)},
  ?!{"a";"a"+/va(210)/}="a1",
  ?!{"a";"a"+/va(240)/}="a2",
  ?!{"a";"a"+/va(270)/}="a3",
  ?!{"b";"b"+/va(120)/}="b1",
  "a";"a1"**\dir{-},  "a";"a2"**\dir{-},  "a";"a3"**\dir{-},
  "b";"b1"**\dir{-}, "b";(3,6)**\dir{-},
  (3,-5),{\ar@{. }@(r,r)(3,6)},
  ?!{"c";"c"+/va(-90)/}="c1",
  ?!{"c";"c"+/va(-60)/}="c2",
  ?!{"c";"c"+/va(-30)/}="c3",
  ?!{"b";"b"+/va(60)/}="b3",
  "c";"c1"**\dir{-},  "c";"c2"**\dir{-},  "c";"c3"**\dir{-},
  "b";"b3"**\dir{-}, 
\end{xy}\ + \                             
\begin{xy}                           
  (0,-2)*+UR{\scriptstyle a}="a",    
  (3,3)*+UR{\scriptstyle b}="b",   
  (6,-2)*+UR{\scriptstyle c}="c",    
  "b";"c"**\dir{-}?>*\dir{>},         
  "c";"a"**\dir{-}?>*\dir{>},          
  (3,-5),{\ar@{. }@(l,l)(3,6)},
  ?!{"a";"a"+/va(210)/}="a1",
  ?!{"a";"a"+/va(240)/}="a2",
  ?!{"a";"a"+/va(270)/}="a3",
  ?!{"b";"b"+/va(120)/}="b1",
  "a";"a1"**\dir{-},  "a";"a2"**\dir{-},  "a";"a3"**\dir{-},
  "b";"b1"**\dir{-}, "b";(3,6)**\dir{-},
  (3,-5),{\ar@{. }@(r,r)(3,6)},
  ?!{"c";"c"+/va(-90)/}="c1",
  ?!{"c";"c"+/va(-60)/}="c2",
  ?!{"c";"c"+/va(-30)/}="c3",
  ?!{"b";"b"+/va(60)/}="b3",
  "c";"c1"**\dir{-},  "c";"c2"**\dir{-},  "c";"c3"**\dir{-},
  "b";"b3"**\dir{-}, 
\end{xy}\ + \                              
\begin{xy}                           
  (0,-2)*+UR{\scriptstyle a}="a",    
  (3,3)*+UR{\scriptstyle b}="b",   
  (6,-2)*+UR{\scriptstyle c}="c",    
  "a";"b"**\dir{-}?>*\dir{>},         
  "c";"a"**\dir{-}?>*\dir{>},          
  (3,-5),{\ar@{. }@(l,l)(3,6)},
  ?!{"a";"a"+/va(210)/}="a1",
  ?!{"a";"a"+/va(240)/}="a2",
  ?!{"a";"a"+/va(270)/}="a3",
  ?!{"b";"b"+/va(120)/}="b1",
  "a";"a1"**\dir{-},  "a";"a2"**\dir{-},  "a";"a3"**\dir{-},
  "b";"b1"**\dir{-}, "b";(3,6)**\dir{-},
  (3,-5),{\ar@{. }@(r,r)(3,6)},
  ?!{"c";"c"+/va(-90)/}="c1",
  ?!{"c";"c"+/va(-60)/}="c2",
  ?!{"c";"c"+/va(-30)/}="c3",
  ?!{"b";"b"+/va(60)/}="b3",
  "c";"c1"**\dir{-},  "c";"c2"**\dir{-},  "c";"c3"**\dir{-},
  "b";"b3"**\dir{-}, 
\end{xy}\ =\ 0,                             
\end{align*}
along with setting graphs with cycles to zero.
\end{definition}

If we let $W$ be the span of $x_1, \cdots, x_n$, then $\E_n \cong \bigoplus_i \Eil(i) \otimes_{\sym_i} W^{\otimes i}$, where the 
symmetric group $\sym_i$ acts on $\Eil(i)$ by permuting vertex labels and on $W^{\otimes i}$ as usual by permuting
factors.  Recall that the free Lie algebra
$\fL_n$ is isomorphic to $\bigoplus_i \Lie(i) \otimes_{\sym_i}  W^{\otimes i}$. A key result of \cite{SiWa11}, namely
its Corollary 3.11, is the following.

\begin{theorem}\label{LieEilPairing}
The cofree Lie coalgebra $\E_n$ pairs perfectly with $\fL_n$ through the extension of the configuration pairing between all 
$\Eil(i)$ and $\Lie(i)$ and the
Kronecker pairing of $W$ with itself extended to $W^{\otimes i}$.
\end{theorem}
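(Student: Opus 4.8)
The plan is to reduce the statement to the operad-level perfect pairing between $\Lie(i)$ and $\Eil(i)$ recalled above, and then to propagate nondegeneracy through the functor $(-)\otimes_{\sym_i} W^{\otimes i}$. Both sides decompose by the number of vertices, respectively leaves, namely $\E_n \cong \bigoplus_i \Eil(i)\otimes_{\sym_i} W^{\otimes i}$ and $\fL_n \cong \bigoplus_i \Lie(i)\otimes_{\sym_i} W^{\otimes i}$, and the combined pairing respects this grading: the Kronecker factor on $W^{\otimes i}$ vanishes against $W^{\otimes j}$ for $i\neq j$, and the configuration pairing is nonzero only when the number of edges surjects onto the internal vertices, which forces matching sizes. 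Hence it suffices to establish perfection on each summand indexed by $i$ separately.

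First I would record that, working rationally, the configuration pairing gives a perfect pairing $\Eil(i)\times \Lie(i)\to\Q$ which is $\sym_i$-invariant for the relabeling action, in the sense that $\langle g\cdot G,\, g\cdot T\rangle = \langle G,T\rangle$; this is exactly the descent to the Lie operad and Eil co-operad cited before the theorem, and it already incorporates the antisymmetry and Jacobi relations defining $\Lie(i)$ as well as the antisymmetry and Arnold relations defining $\Eil(i)$. Such a pairing identifies $\Eil(i)\cong \Lie(i)^*$ as $\sym_i$-representations. Independently, the Kronecker pairing on $W^{\otimes i}$ is perfect and $\sym_i$-invariant for the permutation action, giving $W^{\otimes i}\cong (W^{\otimes i})^*$ equivariantly. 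The tensor product of these two perfect $\sym_i$-invariant pairings is then a perfect $\sym_i$-invariant pairing on $\bigl(\Eil(i)\otimes W^{\otimes i}\bigr)\times\bigl(\Lie(i)\otimes W^{\otimes i}\bigr)$.

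It remains to pass to coinvariants. Because we work over $\Q$ and $\sym_i$ is finite, the averaging idempotent identifies $\sym_i$-coinvariants with $\sym_i$-invariants, so $\Eil(i)\otimes_{\sym_i} W^{\otimes i}\cong \bigl(\Eil(i)\otimes W^{\otimes i}\bigr)^{\sym_i}$ and likewise for $\Lie$. For a finite group in characteristic zero, taking invariants commutes with linear duality, so a $\sym_i$-invariant perfect pairing between finite-dimensional $V$ and $V'$ restricts to a perfect pairing on $(V')^{\sym_i}\times V^{\sym_i}$. Applying this with $V=\Lie(i)\otimes W^{\otimes i}$ and $V'=\Eil(i)\otimes W^{\otimes i}$ shows the induced pairing on the $i$-th summands of $\E_n$ and $\fL_n$ is perfect, and summing over $i$ completes the argument.

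The main obstacle, and the content genuinely imported from \cite{SiWa11}, is the operad-level statement that the configuration pairing is both well defined on the quotients $\Lie(i)$ and $\Eil(i)$ and nondegenerate there; everything after that is formal equivariant linear algebra valid precisely because we are rational. The one point I would verify carefully is that the pairing assembled on $\E_n\times\fL_n$ is literally the tensor product of the configuration and Kronecker pairings under the stated decompositions, so that the abstract perfect-pairing machinery applies verbatim rather than only up to a correction.
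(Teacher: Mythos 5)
Your argument is correct, but it is worth being clear about what it is being compared to: this paper does not prove the statement at all. Theorem~\ref{LieEilPairing} is imported verbatim as Corollary~3.11 of \cite{SiWa11}, so the paper's ``proof'' is a citation, while the only thing proved-adjacent in the text is the operad-level assertion (quoted from Sinha's configuration-space papers) that the pairing of $\gr(n)$ with $\tr(n)$ descends to a perfect pairing of $\Eil(i)$ with $\Lie(i)$. What you supply is exactly the missing bridge from that operad-level duality to the (co)algebra-level statement: decompose $\E_n$ and $\fL_n$ as $\bigoplus_i \Eil(i)\otimes_{\sym_i}W^{\otimes i}$ and $\bigoplus_i \Lie(i)\otimes_{\sym_i}W^{\otimes i}$, tensor the two perfect $\sym_i$-invariant pairings, and descend to coinvariants using that over $\Q$ coinvariants agree with invariants and that an invariant perfect pairing between finite-dimensional representations of a finite group restricts to a perfect pairing on invariants (your averaging argument for injectivity is the right one). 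This is in the same Schur--Weyl spirit as the original source, so it buys self-containedness for this paper at the cost of re-proving a quoted result. The one caveat you flag at the end does need a sentence, and it resolves cleanly: the ``extension'' pairing used here and in \cite{SiWa11} pairs classes by summing the configuration-times-Kronecker pairing over label-preserving matchings, and under the averaging isomorphism between coinvariants and invariants this sum equals $i!$ times the restriction of the tensor-product pairing to invariants on the $i$th summand; a global nonzero scalar per summand does not affect perfection over $\Q$, though it is exactly why the statement is rational rather than integral.
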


The graphs which define $\E_n$ include our symbol graphs $\gs_{0,n}$, but in general are not required to have distinct generators labeling
the endpoints of any edge.  

\begin{definition}
An edge in a labeled graph is called {\bf homogeneous} if it connects vertices with the same label and {\bf heterogeneous} otherwise.  
\end{definition}

Using this terminology, our previously defined distinct-vertex Eil graphs are those for whom all edges are heterogeneous.  
In \cite{SiWa11} it is noted that linear graphs -- that is,  connected, acyclic graphs whose vertices each have valence at most two -- span $\E_n$.
In \cite{ShWa16} a new basis is constructed using ``star graphs''.  A new spanning set is crucial for our present work.

\begin{theorem}\label{distinctspan}
The distinct vertex Eil graphs span $\E_n$ over the rational numbers.
\end{theorem}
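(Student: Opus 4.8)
The plan is to argue by induction on the number of \emph{homogeneous} edges of an Eil graph, removing each such edge at the cost of graphs with fewer homogeneous edges; the base case of zero homogeneous edges is exactly a distinct-vertex graph. Since graphs with cycles are zero and linear graphs already span $\E_n$ by \cite{SiWa11}, it suffices to treat connected acyclic graphs. Throughout I would work with the description $\E_n \cong \bigoplus_i \Eil(i)\otimes_{\sym_i} W^{\otimes i}$, which supplies the key \emph{symmetry identity}: if $u,v$ are two vertices carrying the same label, then the graph $\tau G$ obtained by transposing $u$ and $v$ equals $G$ in $\E_n$, because the transposition fixes the corresponding tensor factor of $W^{\otimes i}$ and so acts trivially on the $\sym_i$-coinvariants.

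Given a homogeneous edge $e=uv$ with both endpoints labelled $x$, the mechanism is to realize the transposition $\tau=(u\,v)$ through a sequence of Arnold moves. Each time I apply the Arnold relation to $u$, $v$ and a third vertex $w$ adjacent to one of them, one of the three terms deletes $e$ and hence has one fewer homogeneous edge, while a second term merely slides the edge at $w$ from one endpoint of $e$ to the other, the antisymmetry relation reversing the orientation of that slid edge. Moving in this way every edge incident to $u$ or to $v$ other than $e$ produces, modulo graphs with fewer homogeneous edges, the fully transposed graph $G_{\mathrm{slid}}$. Tracking orientations, $G_{\mathrm{slid}}$ agrees with $\tau G$ up to reversing the $m$ slid edges together with $e$ itself, so by antisymmetry $G_{\mathrm{slid}} = (-1)^{m+1}\tau G = (-1)^{m+1}G$ in $\E_n$, whereas telescoping the Arnold relations gives $G_{\mathrm{slid}} = (-1)^{m}G + (\text{lower terms})$. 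Comparing the two yields $2G \equiv (\text{lower terms})$, and this is precisely where rational coefficients enter: dividing by $2$ expresses $G$ through graphs with strictly fewer homogeneous edges, closing the induction.

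The main obstacle is the presence of large \emph{monochromatic} pieces, that is, maximal connected subgraphs all of whose vertices carry the same label $x$, since sliding an edge whose far endpoint is also labelled $x$ does not lower the homogeneous-edge count, and so the clean swap above applies directly only when $e$ is isolated, with its endpoints having no further $x$-neighbours. I would handle this by a preliminary reduction that dissolves monochromatic components: a heterogeneous edge meeting such a component lets one apply the Arnold relation along an internal edge so as to cut it, splitting the component and lowering the total number of homogeneous edges, at the cost of an auxiliary term in which the heterogeneous edge has simply been slid one vertex deeper into the component. Showing that this terminates --- equivalently, that an \emph{entirely} monochromatic graph on $c\ge 2$ vertices vanishes in $\E_n\otimes\Q$ --- is the technical heart. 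The latter vanishing can be read off from $\E_n \cong \bigoplus_i \Eil(i)\otimes_{\sym_i} W^{\otimes i}$, since a monochromatic graph lives in the full $\sym_c$-coinvariants of $\Eil(c)$, which vanish for $c\ge 2$ because the trivial representation does not occur in $\Lie(c)\cong \Eil(c)^{*}$.

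Finally, as a consistency check and alternative framing, one may phrase the statement dually through the perfect pairing of Theorem~\ref{LieEilPairing}: the distinct-vertex graphs span $\E_n$ if and only if they detect $\fL_n$, i.e. no nonzero element of $\fL_n$ pairs to zero with every distinct-vertex graph. Restricting to a fixed multidegree and polarizing reduces this to the multilinear situation, where every graph is automatically distinct-vertex and the pairing is perfect by Theorem~\ref{LieEilPairing}; I expect this viewpoint to corroborate the rational factors appearing in the reduction, though the combinatorial core --- that a homogeneous edge can be removed at the price of a factor of $2$ --- seems unavoidable in either approach.
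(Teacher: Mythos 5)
Your overall architecture --- induction on the number of homogeneous edges, with Arnold and antisymmetry moves doing the reduction and a rational denominator appearing at the end --- is the same as the paper's, and your isolated-edge swap is correct: for a homogeneous edge $e=uv$ whose endpoints have no further $x$-neighbours, your two evaluations of $G_{\mathrm{slid}}$ really do differ by exactly one sign, yielding $2G \equiv (\text{lower terms})$. The genuine gap is exactly where you place the ``technical heart'': monochromatic components with more than one edge. Your dissolution step does not accomplish the reduction. Sliding a heterogeneous edge one vertex deeper into the component terminates trivially (the component is a finite tree, so the walk ends at a leaf), but it terminates at a graph with exactly as many homogeneous edges as before, so no relation results; and the asserted equivalence between this termination and the rational vanishing of entirely monochromatic graphs is never justified. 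That vanishing is a true statement about the all-$x$ multidegree summand, namely $(\Eil(c)\otimes\Q)_{\sym_c}=0$ for $c\ge 2$, but the graphs you must reduce carry other labels as well, and $\E_n$ has no multiplicative structure allowing you to ``factor out'' the monochromatic piece; so citing the vanishing does not by itself give the inductive step for, say, the chain $b\to a\to a\to a$ --- precisely the case on which the paper spends essentially all of its effort.

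The bridge can in fact be built from your own ingredients, but it is real work and it is absent from the proposal: one should first use your sliding moves (with antisymmetry restoring orientations, they move the attachment point of a heterogeneous edge along the component with no sign change, modulo lower terms) to gather all heterogeneous edges at a single vertex $u_0$ of the component $C$; the graph is then the image of the monochromatic tree $T$ under a well-defined linear map $\Psi:\Eil(c)\otimes\Q\to\E_n\otimes\Q$ (attach the rest of the graph at $u_0$ and relabel every vertex of $T$ by $x$), the same sliding argument gives $\Psi(\sigma T)\equiv\Psi(T)$ modulo lower terms for all $\sigma\in\sym_c$, and only then does your coinvariant fact finish the induction via $c!\,\Psi(T)\equiv\Psi\bigl(\textstyle\sum_\sigma \sigma T\bigr)=0$. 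The paper instead handles this case head-on: it linearizes the monochromatic piece by Arnold moves and then telescopes a pendant vertex down the resulting chain, the recurring extra terms all being isomorphic to the original graph, ending with $(n+1)G\sim(\text{lower terms})$. Note that the denominators the paper needs grow with the component, so your side claim that a factor of $2$ always suffices is also unsubstantiated. Finally, your closing duality framing proves nothing: pushing a multilinear graph forward along a relabelling map generally creates homogeneous edges, so that argument only re-derives perfectness of the pairing against \emph{all} graphs (Theorem~\ref{LieEilPairing}), not spanning by distinct-vertex ones, as you partly concede.
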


\begin{proof}
We use  the defining antisymmetry and Arnold identities to express any 
graph in $\E_n$ as a linear combination of graphs with fewer homogeneous edges, yielding the result by 
induction.   
The reduction process is delicate since we will have recurrent appearances of terms, so care with signs is essential.

Consider any representative graph in $\E_n$. 
Pick any (co)generator $a$ of the cofree Lie coalgebra which appears in the graph and consider a maximal connected subgraph consisting entirely 
of edges connecting that generator to itself.  As is standard, see for example \cite{Sinh09.2, Sinh18}, the 
Arnold identity applied to that subgraph can be used to reduce valence and ultimately yield a linear combination of linear 
graphs.  Because we apply  identities exclusively to edges in the subgraph, these will have the same number of homogeneous edges.  

The resulting graphs will each have a linear homogeneous subgraph  all of whose vertices except those at the ends have two homogeneous edges.
All other edges incident to these vertices, which we call normal edges since they connect outside the subgraph, are heterogeneous.
 

 We show such a graph $G$ with a linear homogeneous subgraph can be reduced. 
 We first ``move'' all but one of the normal edges to one end of the linear graph 
 (in our pictures, to the right) using the Arnold identity as follows:\\

\begin{align*}
 &G  \,\,\,\, = \begin{xy}                           
  (12,5)*+UR{\scriptstyle c}="c1",    
  (9,0)*+UR{\scriptstyle a}="a1",   
  (15,0)*+UR{\scriptstyle a}="a2", 
    (3,0)*+UR{ }="o2",   
    (21,0)*+UR{ }="o3",    
  "o2";"a1"**\dir{-}?>*\dir{>},         
  "a1";"a2"**\dir{-}?>*\dir{>},       
   "a2";"o3"**\dir{-}?>*\dir{>}, 
   "c1";"a1"**\dir{-}?>*\dir{>},         
    (9,-4),{\ar@{.}@(l,l)(9,8)}, 
  ?!{"o2";"o2"+/va(0)/}="o21", 
  "o2";"o21"**\dir{.}, 
   (15,-4),{\ar@{.}@(r,r)(15,8)}, 
  ?!{"o3";"o3"+/va(0)/}="o31", 
  "o3";"o31"**\dir{.}, 
   (9,8),{\ar@{.}(15,8)}, 
   ?!{"c1";"c1"+/va(110)/}="c11", 
    ?!{"c1";"c1"+/va(90)/}="c12",
   ?!{"c1";"c1"+/va(70)/}="c13", 
      "c1";"c13"**\dir{-},  
      "c1";"c11"**\dir{-},
  "c1";"c12"**\dir{-}, 
    (9,-4),{\ar@{.}(15,-4)},   
\end{xy} = \,\,\,\, -
\begin{xy}                           
  (12,5)*+UR{\scriptstyle c}="c1",    
  (9,0)*+UR{\scriptstyle a}="a1",   
  (15,0)*+UR{\scriptstyle a}="a2", 
    (3,0)*+UR{ }="o2",   
    (21,0)*+UR{ }="o3",    
  "o2";"a1"**\dir{-}?>*\dir{>},         
  "a1";"a2"**\dir{-}?>*\dir{>},       
   "a2";"o3"**\dir{-}?>*\dir{>}, 
   "a2";"c1"**\dir{-}?>*\dir{>},         
    (9,-4),{\ar@{.}@(l,l)(9,8)}, 
  ?!{"o2";"o2"+/va(0)/}="o21", 
  "o2";"o21"**\dir{.}, 
   (15,-4),{\ar@{.}@(r,r)(15,8)}, 
  ?!{"o3";"o3"+/va(0)/}="o31", 
  "o3";"o31"**\dir{.}, 
   (9,8),{\ar@{.}(15,8)}, 
   ?!{"c1";"c1"+/va(110)/}="c11", 
    ?!{"c1";"c1"+/va(90)/}="c12",
   ?!{"c1";"c1"+/va(70)/}="c13", 
      "c1";"c13"**\dir{-},  
      "c1";"c11"**\dir{-},
  "c1";"c12"**\dir{-}, 
    (9,-4),{\ar@{.}(15,-4)},
 \end{xy}-
\begin{xy}                           
  (12,5)*+UR{\scriptstyle c}="c1",    
  (9,0)*+UR{\scriptstyle a}="a1",   
  (15,0)*+UR{\scriptstyle a}="a2", 
    (3,0)*+UR{ }="o2",   
    (21,0)*+UR{ }="o3",    
  "o2";"a1"**\dir{-}?>*\dir{>},         
  "a2";"c1"**\dir{-}?>*\dir{>},       
   "a2";"o3"**\dir{-}?>*\dir{>}, 
   "c1";"a1"**\dir{-}?>*\dir{>},         
    (9,-4),{\ar@{.}@(l,l)(9,8)}, 
  ?!{"o2";"o2"+/va(0)/}="o21", 
  "o2";"o21"**\dir{.}, 
   (15,-4),{\ar@{.}@(r,r)(15,8)}, 
  ?!{"o3";"o3"+/va(0)/}="o31", 
  "o3";"o31"**\dir{.}, 
   (9,8),{\ar@{.}(15,8)}, 
   ?!{"c1";"c1"+/va(110)/}="c11", 
    ?!{"c1";"c1"+/va(90)/}="c12",
   ?!{"c1";"c1"+/va(70)/}="c13", 
      "c1";"c13"**\dir{-},  
      "c1";"c11"**\dir{-},
  "c1";"c12"**\dir{-}, 
    (9,-4),{\ar@{.}(15,-4)},
    \end{xy}.
\end{align*}
Note that there may be other edges connected to these vertices that are not drawn in the picture.  
No homogenous edges have been added in any graph.  In the middle graph, $c$ is connected to an $a$ which is one further to the right, as desired.
In the third graph we have decreased the number of homogeneous edges by one.

Reserving $=$ for equality in $\E_n$ (that is, modulo Arnold and antisymmetry relations), 
we let $\sim$  denote equivalence in $\E_n$ modulo graphs with fewer homogeneous edges, which are reducible by induction hypothesis.
Applying the above identity repeatedly, we have
\begin{align*}
&G \,\,\,\, \sim 
\begin{xy}                           
  (0,0)*+UR{\scriptstyle b}="b1",    
  (6,0)*+UR{\scriptstyle a}="a1",   
  (12,0)*+UR{\scriptstyle a}="a2", 
  (18,0)*+UR{\scriptstyle a}="a3",  
  (24,0)*+UR{ }="o1",    
    (27,0)*+UR{ }="o2",    
  (33,0)*+UR{\scriptstyle a}="a4",       
  "b1";"a1"**\dir{-}?>*\dir{>},         
  "a1";"a2"**\dir{-}?>*\dir{>},         
  "a2";"a3"**\dir{-}?>*\dir{>},       
   "a3";"o1"**\dir{-}?>*\dir{>}, 
   "o1";"o2"**\dir{.}?>*\dir{.},    
   "o2";"a4"**\dir{-}?>*\dir{>},                                      
  (6,-7),{\ar@{.}@(l,l)(6,7)}, 
  ?!{"b1";"b1"+/va(0)/}="b11", 
  ?!{"b1";"b1"+/va(30)/}="b12",
  ?!{"b1";"b1"+/va(-30)/}="b13",
  "b1";"b11"**\dir{-},  "b1";"b12"**\dir{-},  "b1";"b13"**\dir{-},
   (27,-7),{\ar@{.}@(r,r)(27,7)},
   ?!{"a4";"a4"+/va(0)/}="a41", 
  ?!{"a4";"a4"+/va(30)/}="a42",
  ?!{"a4";"a4"+/va(-30)/}="a43",
  "a4";"a41"**\dir{-},  "a4";"a42"**\dir{-},  "a4";"a43"**\dir{-},
    (6,7),{\ar@{.}(27,7)},
    (6,-7),{\ar@{.}(27,-7)},
\end{xy},
\end{align*}
where there could be  single normal edge on the left-most $a$, multiple possible on far right $a$, and no normal edges
on the ``middle'' $a$'s. 

We call the graph on the right-hand side $G_0$ and now reduce it.  
Apply the Arnold identity to the $b \to a \to a$ subgraph to get 

\begin{align*}
& G_0 \,\,\,\, = \,\,\,\, -
\begin{xy}                           
  (0,0)*+UR{\scriptstyle b}="b1",    
  (3,5)*+UR{\scriptstyle a}="a1",   
  (6,0)*+UR{\scriptstyle a}="a2", 
  (12,0)*+UR{\scriptstyle a}="a3", 
  (18,0)*+UR{ }="o1",    
    (21,0)*+UR{ }="o2",    
  (27,0)*+UR{\scriptstyle a}="a4",       
  "a2";"b1"**\dir{-}?>*\dir{>},         
  "a1";"a2"**\dir{-}?>*\dir{>},         
  "a2";"a3"**\dir{-}?>*\dir{>},   
    "a3";"o1"**\dir{-}?>*\dir{>},           
   "o1";"o2"**\dir{.}?>*\dir{.},    
   "o2";"a4"**\dir{-}?>*\dir{>},                                      
  (6,-7),{\ar@{.}@(l,l)(6,7)}, 
  ?!{"b1";"b1"+/va(0)/}="b11", 
  ?!{"b1";"b1"+/va(30)/}="b12",
  ?!{"b1";"b1"+/va(-30)/}="b13",
  "b1";"b11"**\dir{-},  "b1";"b12"**\dir{-},  "b1";"b13"**\dir{-},
   (21,-7),{\ar@{.}@(r,r)(21,7)},
   ?!{"a4";"a4"+/va(0)/}="a41", 
  ?!{"a4";"a4"+/va(30)/}="a42",
  ?!{"a4";"a4"+/va(-30)/}="a43",
  "a4";"a41"**\dir{-},  "a4";"a42"**\dir{-},  "a4";"a43"**\dir{-},
    (6,7),{\ar@{.}(21,7)},
    (6,-7),{\ar@{.}(21,-7)},
\end{xy} -
\begin{xy}                           
  (0,0)*+UR{\scriptstyle b}="b1",    
  (3,5)*+UR{\scriptstyle a}="a1",   
  (6,0)*+UR{\scriptstyle a}="a2", 
  (12,0)*+UR{\scriptstyle a}="a3", 
  (18,0)*+UR{ }="o1",    
    (21,0)*+UR{ }="o2",    
  (27,0)*+UR{\scriptstyle a}="a4",       
  "b1";"a1"**\dir{-}?>*\dir{>},         
  "a2";"b1"**\dir{-}?>*\dir{>},         
  "a2";"a3"**\dir{-}?>*\dir{>},   
    "a3";"o1"**\dir{-}?>*\dir{>},           
   "o1";"o2"**\dir{.}?>*\dir{.},    
   "o2";"a4"**\dir{-}?>*\dir{>},                                      
  (6,-7),{\ar@{.}@(l,l)(6,7)}, 
  ?!{"b1";"b1"+/va(0)/}="b11", 
  ?!{"b1";"b1"+/va(30)/}="b12",
  ?!{"b1";"b1"+/va(-30)/}="b13",
  "b1";"b11"**\dir{-},  "b1";"b12"**\dir{-},  "b1";"b13"**\dir{-},
   (21,-7),{\ar@{.}@(r,r)(21,7)},
   ?!{"a4";"a4"+/va(0)/}="a41", 
  ?!{"a4";"a4"+/va(30)/}="a42",
  ?!{"a4";"a4"+/va(-30)/}="a43",
  "a4";"a41"**\dir{-},  "a4";"a42"**\dir{-},  "a4";"a43"**\dir{-},
    (6,7),{\ar@{.}(21,7)},
    (6,-7),{\ar@{.}(21,-7)},
\end{xy}.
\end{align*}

Using antisymmetry on the heterogeneous edge connecting $b$ and $a$ we have
\begin{align*}
 & G_0\,\,\,\, = 
\begin{xy}                           
  (0,0)*+UR{\scriptstyle b}="b1",    
  (3,5)*+UR{\scriptstyle a}="a1",   
  (6,0)*+UR{\scriptstyle a}="a2", 
  (12,0)*+UR{\scriptstyle a}="a3", 
  (18,0)*+UR{ }="o1",    
    (21,0)*+UR{ }="o2",    
  (27,0)*+UR{\scriptstyle a}="a4",       
  "b1";"a2"**\dir{-}?>*\dir{>},         
  "a1";"a2"**\dir{-}?>*\dir{>},         
  "a2";"a3"**\dir{-}?>*\dir{>},   
    "a3";"o1"**\dir{-}?>*\dir{>},           
   "o1";"o2"**\dir{.}?>*\dir{.},    
   "o2";"a4"**\dir{-}?>*\dir{>},                                      
  (6,-7),{\ar@{.}@(l,l)(6,7)}, 
  ?!{"b1";"b1"+/va(0)/}="b11", 
  ?!{"b1";"b1"+/va(30)/}="b12",
  ?!{"b1";"b1"+/va(-30)/}="b13",
  "b1";"b11"**\dir{-},  "b1";"b12"**\dir{-},  "b1";"b13"**\dir{-},
   (21,-7),{\ar@{.}@(r,r)(21,7)},
   ?!{"a4";"a4"+/va(0)/}="a41", 
  ?!{"a4";"a4"+/va(30)/}="a42",
  ?!{"a4";"a4"+/va(-30)/}="a43",
  "a4";"a41"**\dir{-},  "a4";"a42"**\dir{-},  "a4";"a43"**\dir{-},
    (6,7),{\ar@{.}(21,7)},
    (6,-7),{\ar@{.}(21,-7)},
\end{xy} -
\begin{xy}                           
  (0,0)*+UR{\scriptstyle b}="b1",    
  (3,5)*+UR{\scriptstyle a}="a1",   
  (6,0)*+UR{\scriptstyle a}="a2", 
  (12,0)*+UR{\scriptstyle a}="a3", 
  (18,0)*+UR{ }="o1",    
    (21,0)*+UR{ }="o2",    
  (27,0)*+UR{\scriptstyle a}="a4",       
  "b1";"a1"**\dir{-}?>*\dir{>},         
  "a2";"b1"**\dir{-}?>*\dir{>},         
  "a2";"a3"**\dir{-}?>*\dir{>},   
    "a3";"o1"**\dir{-}?>*\dir{>},           
   "o1";"o2"**\dir{.}?>*\dir{.},    
   "o2";"a4"**\dir{-}?>*\dir{>},                                      
  (6,-7),{\ar@{.}@(l,l)(6,7)}, 
  ?!{"b1";"b1"+/va(0)/}="b11", 
  ?!{"b1";"b1"+/va(30)/}="b12",
  ?!{"b1";"b1"+/va(-30)/}="b13",
  "b1";"b11"**\dir{-},  "b1";"b12"**\dir{-},  "b1";"b13"**\dir{-},
   (21,-7),{\ar@{.}@(r,r)(21,7)},
   ?!{"a4";"a4"+/va(0)/}="a41", 
  ?!{"a4";"a4"+/va(30)/}="a42",
  ?!{"a4";"a4"+/va(-30)/}="a43",
  "a4";"a41"**\dir{-},  "a4";"a42"**\dir{-},  "a4";"a43"**\dir{-},
    (6,7),{\ar@{.}(21,7)},
    (6,-7),{\ar@{.}(21,-7)},
\end{xy}.
\end{align*}

Let $G_1$ be the first graph on the right hand side.
The second term on the right hand side has fewer homogeneous edges, so $G_0 \sim G_1$.  

We now  ``transfer the normal $a \to $ in $G_1$ down the graph to the end.'' 
We first apply the Arnold identity to this normal edge and the one following it in the linear chain, and then
antisymmetry to the resulting terms to obtain

\begin{align*}
& G_1\,\,\,\, = 
\begin{xy}                           
  (0,0)*+UR{\scriptstyle b}="b1",    
  (9,5)*+UR{\scriptstyle a}="a1",   
  (6,0)*+UR{\scriptstyle a}="a2", 
  (12,0)*+UR{\scriptstyle a}="a3", 
  (18,0)*+UR{ }="o1",    
    (21,0)*+UR{ }="o2",    
  (27,0)*+UR{\scriptstyle a}="a4",       
  "b1";"a2"**\dir{-}?>*\dir{>},         
  "a1";"a3"**\dir{-}?>*\dir{>},         
  "a2";"a3"**\dir{-}?>*\dir{>},   
    "a3";"o1"**\dir{-}?>*\dir{>},           
   "o1";"o2"**\dir{.}?>*\dir{.},    
   "o2";"a4"**\dir{-}?>*\dir{>},                                      
  (6,-7),{\ar@{.}@(l,l)(6,7)}, 
  ?!{"b1";"b1"+/va(0)/}="b11", 
  ?!{"b1";"b1"+/va(30)/}="b12",
  ?!{"b1";"b1"+/va(-30)/}="b13",
  "b1";"b11"**\dir{-},  "b1";"b12"**\dir{-},  "b1";"b13"**\dir{-},
   (21,-7),{\ar@{.}@(r,r)(21,7)},
   ?!{"a4";"a4"+/va(0)/}="a41", 
  ?!{"a4";"a4"+/va(30)/}="a42",
  ?!{"a4";"a4"+/va(-30)/}="a43",
  "a4";"a41"**\dir{-},  "a4";"a42"**\dir{-},  "a4";"a43"**\dir{-},
    (6,7),{\ar@{.}(21,7)},
    (6,-7),{\ar@{.}(21,-7)},
\end{xy} -
\begin{xy}                           
  (0,0)*+UR{\scriptstyle b}="b1",    
  (9,5)*+UR{\scriptstyle a}="a1",   
  (6,0)*+UR{\scriptstyle a}="a2", 
  (12,0)*+UR{\scriptstyle a}="a3", 
  (18,0)*+UR{ }="o1",    
    (21,0)*+UR{ }="o2",    
  (27,0)*+UR{\scriptstyle a}="a4",       
  "a1";"a3"**\dir{-}?>*\dir{>},         
  "b1";"a2"**\dir{-}?>*\dir{>},         
  "a2";"a1"**\dir{-}?>*\dir{>},   
    "a3";"o1"**\dir{-}?>*\dir{>},           
   "o1";"o2"**\dir{.}?>*\dir{.},    
   "o2";"a4"**\dir{-}?>*\dir{>},                                      
  (6,-7),{\ar@{.}@(l,l)(6,7)}, 
  ?!{"b1";"b1"+/va(0)/}="b11", 
  ?!{"b1";"b1"+/va(30)/}="b12",
  ?!{"b1";"b1"+/va(-30)/}="b13",
  "b1";"b11"**\dir{-},  "b1";"b12"**\dir{-},  "b1";"b13"**\dir{-},
   (21,-7),{\ar@{.}@(r,r)(21,7)},
   ?!{"a4";"a4"+/va(0)/}="a41", 
  ?!{"a4";"a4"+/va(30)/}="a42",
  ?!{"a4";"a4"+/va(-30)/}="a43",
  "a4";"a41"**\dir{-},  "a4";"a42"**\dir{-},  "a4";"a43"**\dir{-},
    (6,7),{\ar@{.}(21,7)},
    (6,-7),{\ar@{.}(21,-7)},
\end{xy}.
\end{align*}

Call the first graph on the right hand side $G_2$, and  notice the last graph is $G_0$.  
So $G_1=G_2-G_0$, and thus $2 G_0 \sim G_2$.

In general, let 
\begin{align*}
G_i\,\,\,\, = 
\begin{xy}                           
  (0,0)*+UR{\scriptstyle b}="b1",    
  (6,0)*+UR{\scriptstyle a}="a1",   
    (12,0)*+UR{ }="o1",    
        (15,0)*+UR{ }="o2",    
  (21,0)*+UR{\scriptstyle a}="a2", 
    (24,5)*+UR{\scriptstyle a}="ai",   
  (27,0)*+UR{\scriptstyle a}="a3", 
  (33,0)*+UR{ }="o3",    
    (36,0)*+UR{ }="o4",    
  (42,0)*+UR{\scriptstyle a}="a4",       
  "b1";"a1"**\dir{-}?>*\dir{>},         
  "a1";"o1"**\dir{-}?>*\dir{>}, 
      "o1";"o2"**\dir{.}?>*\dir{.},                         
  "o2";"a2"**\dir{-}?>*\dir{>},   
    "a2";"a3"**\dir{-}?>*\dir{>},   
      "ai";"a2"**\dir{-}?>*\dir{>},   
    "a3";"o3"**\dir{-}?>*\dir{>},           
   "o3";"o4"**\dir{.}?>*\dir{.},    
   "o4";"a4"**\dir{-}?>*\dir{>},                                      
  (6,-7),{\ar@{.}@(l,l)(6,7)}, 
  ?!{"b1";"b1"+/va(0)/}="b11", 
  ?!{"b1";"b1"+/va(30)/}="b12",
  ?!{"b1";"b1"+/va(-30)/}="b13",
  "b1";"b11"**\dir{-},  "b1";"b12"**\dir{-},  "b1";"b13"**\dir{-},
   (36,-7),{\ar@{.}@(r,r)(36,7)},
   ?!{"a4";"a4"+/va(0)/}="a41", 
  ?!{"a4";"a4"+/va(30)/}="a42",
  ?!{"a4";"a4"+/va(-30)/}="a43",
  "a4";"a41"**\dir{-},  "a4";"a42"**\dir{-},  "a4";"a43"**\dir{-},
    (6,7),{\ar@{.}(36,7)},
    (6,-7),{\ar@{.}(36,-7)},
\end{xy},
\end{align*}
where the ``normal'' $a \to$ is connected to the $i$th $a$ in the linear chain.
We argue as above, applying the Arnold identity to the normal edge and edge which follows it to deduce $G_i=G_{i+1}-G_0$.  
Hence for each $i<n$, where $n$ is the length of the linear chain of $a$'s,
\[
G_1=G_2-G_0=G_3-2G_0=\cdots = G_i-(i-1)G_0.
\]
 
 We thus focus on 
\begin{align*}
G_n\,\,\,\, = 
\begin{xy}                           
  (0,0)*+UR{\scriptstyle b}="b1",    
  (6,0)*+UR{\scriptstyle a}="a1",   
    (12,0)*+UR{ }="o1",    
        (15,0)*+UR{ }="o2",    
  (21,0)*+UR{\scriptstyle a}="a2", 
    (39,5)*+UR{\scriptstyle a}="ai",   
  (27,0)*+UR{\scriptstyle a}="a3", 
  (33,0)*+UR{ }="o3",    
    (36,0)*+UR{ }="o4",    
  (42,0)*+UR{\scriptstyle a}="a4",       
  "b1";"a1"**\dir{-}?>*\dir{>},      
  "a1";"o1"**\dir{-}?>*\dir{>},    
      "o1";"o2"**\dir{.}?>*\dir{.},                      
  "o2";"a2"**\dir{-}?>*\dir{>},   
    "a2";"a3"**\dir{-}?>*\dir{>},   
      "ai";"a4"**\dir{-}?>*\dir{>},   
    "a3";"o3"**\dir{-}?>*\dir{>},           
   "o3";"o4"**\dir{.}?>*\dir{.},    
   "o4";"a4"**\dir{-}?>*\dir{>},                                      
 (6,-7),{\ar@{.}@(l,l)(6,7)},  
  ?!{"b1";"b1"+/va(0)/}="b11", 
  ?!{"b1";"b1"+/va(30)/}="b12",
  ?!{"b1";"b1"+/va(-30)/}="b13",
  "b1";"b11"**\dir{-},  "b1";"b12"**\dir{-},  "b1";"b13"**\dir{-},
   (36,-7),{\ar@{.}@(r,r)(36,7)}, 
   ?!{"a4";"a4"+/va(0)/}="a41", 
  ?!{"a4";"a4"+/va(30)/}="a42",
  ?!{"a4";"a4"+/va(-30)/}="a43",
  "a4";"a41"**\dir{-},  "a4";"a42"**\dir{-},  "a4";"a43"**\dir{-},
    (6,7),{\ar@{.}(36,7)},
    (6,-7),{\ar@{.}(36,-7)},
\end{xy},
\end{align*}
working at the right end of the graph to finish our argument.  
Apply the Arnold identity to each of the heterogeneous edges originating at the final $a$ in the chain 
along with the homogeneous edge at the end, followed by antisymmetry to obtain graphs with arrows ``moving right.'' 
The first step is











\begin{align*}
 &G_n \,\,\,\, =
 \begin{xy}                           
  (12,5)*+UR{\scriptstyle a}="a3",    
  (9,0)*+UR{\scriptstyle a}="a1",   
  (15,0)*+UR{\scriptstyle a}="a2", 
    (18,5)*+UR{\scriptstyle c}="c1", 
    (3,0)*+UR{ }="o2",   
  "o2";"a1"**\dir{-}?>*\dir{>},         
  "a1";"a2"**\dir{-}?>*\dir{>},       
   "a2";"c1"**\dir{-}?>*\dir{>}, 
     "a3";"a2"**\dir{-}?>*\dir{>},               
    (9,-4),{\ar@{.}@(l,l)(9,8)}, 
  ?!{"o2";"o2"+/va(0)/}="o21", 
  "o2";"o21"**\dir{.}, 
   (15,-4),{\ar@{.}@(r,r)(15,8)}, 
   ?!{"a2";"a2"+/va(-30)/}="a21", 
    ?!{"a2";"a2"+/va(-50)/}="a22",
   ?!{"a2";"a2"+/va(-10)/}="a23", 
	   ?!{"c1";"c1"+/va(10)/}="c11",  
	   ?!{"c1";"c1"+/va(30)/}="c21",
	   ?!{"c1";"c1"+/va(50)/}="c31",
 "c1";"c11"**\dir{-},
 "c1";"c21"**\dir{-},
 "c1";"c31"**\dir{-},
      "a2";"a23"**\dir{-},  
      "a2";"a21"**\dir{-},  
   (9,8),{\ar@{.}(15,8)}, 
  "a2";"a22"**\dir{-}, 
    (9,-4),{\ar@{.}(15,-4)}, 
\end{xy}= \,\,\,\, -
\begin{xy}                           
  (12,5)*+UR{\scriptstyle a}="a3",    
  (9,0)*+UR{\scriptstyle a}="a1",   
  (15,0)*+UR{\scriptstyle a}="a2", 
    (18,5)*+UR{\scriptstyle c}="c1", 
    (3,0)*+UR{ }="o2",   
  "o2";"a1"**\dir{-}?>*\dir{>},         
  "a1";"a2"**\dir{-}?>*\dir{>},       
   "a2";"c1"**\dir{-}?>*\dir{>}, 
     "c1";"a3"**\dir{-}?>*\dir{>},               
    (9,-4),{\ar@{.}@(l,l)(9,8)}, 
  ?!{"o2";"o2"+/va(0)/}="o21", 
  "o2";"o21"**\dir{.}, 
   (15,-4),{\ar@{.}@(r,r)(15,8)}, 
   ?!{"a2";"a2"+/va(-30)/}="a21", 
    ?!{"a2";"a2"+/va(-50)/}="a22",
   ?!{"a2";"a2"+/va(-10)/}="a23", 
?!{"c1";"c1"+/va(10)/}="c11",  
	   ?!{"c1";"c1"+/va(30)/}="c21",
	   ?!{"c1";"c1"+/va(50)/}="c31",
 "c1";"c11"**\dir{-},
 "c1";"c21"**\dir{-},
 "c1";"c31"**\dir{-},
      "a2";"a23"**\dir{-},  
      "a2";"a21"**\dir{-},  
   (9,8),{\ar@{.}(15,8)}, 
  "a2";"a22"**\dir{-}, 
    (9,-4),{\ar@{.}(15,-4)}, 
\end{xy} -
\begin{xy}                           
  (12,5)*+UR{\scriptstyle a}="a3",    
  (9,0)*+UR{\scriptstyle a}="a1",   
  (15,0)*+UR{\scriptstyle a}="a2", 
    (18,5)*+UR{\scriptstyle c}="c1", 
    (3,0)*+UR{ }="o2",   
  "o2";"a1"**\dir{-}?>*\dir{>},         
  "a1";"a2"**\dir{-}?>*\dir{>},       
   "a3";"c1"**\dir{-}?>*\dir{>}, 
     "a2";"a3"**\dir{-}?>*\dir{>},               
    (9,-4),{\ar@{.}@(l,l)(9,8)}, 
  ?!{"o2";"o2"+/va(0)/}="o21", 
  "o2";"o21"**\dir{.}, 
   (15,-4),{\ar@{.}@(r,r)(15,8)}, 
   ?!{"a2";"a2"+/va(-30)/}="a21", 
    ?!{"a2";"a2"+/va(-50)/}="a22",
   ?!{"a2";"a2"+/va(-10)/}="a23", 
?!{"c1";"c1"+/va(10)/}="c11",  
	   ?!{"c1";"c1"+/va(30)/}="c21",
	   ?!{"c1";"c1"+/va(50)/}="c31",
 "c1";"c11"**\dir{-},
 "c1";"c21"**\dir{-},
 "c1";"c31"**\dir{-},
      "a2";"a23"**\dir{-},  
      "a2";"a21"**\dir{-},  
   (9,8),{\ar@{.}(15,8)}, 
  "a2";"a22"**\dir{-}, 
    (9,-4),{\ar@{.}(15,-4)}, 
\end{xy}.
\end{align*}

The first graph on the right hand side has fewer homogeneous edges. 
We then similarly apply the Arnold identity to the second graph on the right hand side,
using another normal heterogeneous edge which emanates from  final $a$ in the original linear chain (which is the next to final $a$ currently), 
followed again by antisymmetry.  Doing so 
for all of these normal edges, in each case a negative sign arising from the Arnold identity cancels with one from antisymmetry.
As the second graph on the right hand side occurs with a coefficient of $-1$ at the start of the process, we deduce that  $G_n \sim -G_0$.  Thus, we have
$G_1 = G_n-(n-1)G_0 \sim -nG_0$.  Since $G_0 \sim G_1$ and $G \sim G_0$ we deduce that $(n+1) G$ is equivalent to a linear combination
of graphs with fewer homogeneous edges, completing the reduction argument.  
The base case of no homogeneous edges is
a tautology.
\end{proof}

See Appendix~\ref{reductionexample} for an example of the reduction given by the proof of this theorem.



\section{The main theorem, and connections}

\subsection{Proof of the main theorem}

Recall Corollary~\ref{lcsdefined} that our letter-linking invariants are well-defined on  lower central series subquotients. 
Our main result in this paper is the following.   

\begin{theorem}\label{main}
The homomorphisms $\Phi : \Q \sy_i \to {\rm Hom}(\gamma_i F_n / \gamma_{i+1} F_n, \Q)$
are surjective.
\end{theorem}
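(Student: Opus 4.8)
The plan is to assemble the theorem from the machinery already built, using the perfect pairing of Theorem~\ref{LieEilPairing} together with the spanning result of Theorem~\ref{distinctspan} and the computation of letter-linking in terms of the configuration pairing in Theorem~\ref{graph-symbol-pairing}. The key point is that surjectivity onto $\mathrm{Hom}(\gamma_i F_n/\gamma_{i+1} F_n,\Q)$ is equivalent to the letter-linking functionals $\Phi_\sigma$, as $\sigma$ ranges over symbols of depth $i$, \emph{separating points} of the subquotient $\gamma_i F_n/\gamma_{i+1}F_n$; since that subquotient is a finite-dimensional $\Q$-vector space whose dual we are trying to hit, separation of points gives surjectivity.

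First I would recall that $\gamma_i F_n/\gamma_{i+1}F_n \otimes \Q$ is the degree-$i$ piece of the free Lie algebra $\fL_n$, namely $\Lie(i)\otimes_{\sym_i}W^{\otimes i}$ in the notation of Theorem~\ref{LieEilPairing}, and that this theorem identifies its linear dual with the degree-$i$ piece of the cofree Lie coalgebra $\E_n$ via the configuration pairing. Thus every functional on the subquotient is realized by some element of $\E_n$. Second, by Theorem~\ref{distinctspan}, that element can be written (over $\Q$) as a linear combination of distinct-vertex Eil graphs $G$. So it suffices to show that each distinct-vertex Eil graph functional is realized by a letter-linking homomorphism $\Phi_\sigma$ for a suitable symbol $\sigma$ of depth $i$. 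Third, by Proposition~\ref{reductionsurjective} every symbol arises as a reduction $\rho_V G$ of a symbol graph, and Theorem~\ref{graph-symbol} lets us pass between graphs and symbols at the level of the functionals $\Phi$; combined with Theorem~\ref{graph-symbol-pairing}, which asserts $\Phi_G(w)=\langle G,T_w\rangle$ on commutators with distinct generators, the letter-linking functional attached to a distinct-vertex graph $G$ agrees with the configuration pairing against $G$.

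The one genuine step of work is to upgrade Theorem~\ref{graph-symbol-pairing}, which is stated for $G\in\gr(n)$ and $w\in UC(n)$ (each generator occurring once), to the general situation where generators may repeat. The mechanism for this is the isomorphism $\E_n \cong \bigoplus_i \Eil(i)\otimes_{\sym_i} W^{\otimes i}$ together with the corresponding $\bigoplus_i \Lie(i)\otimes_{\sym_i}W^{\otimes i}$ description of $\fL_n$: a general basis commutator and a general distinct-vertex graph are obtained from the multilinear ($UC(n)$) case by a substitution of generators into the slots, and both the configuration pairing and the letter-linking count $\Phi_G$ are built to be compatible with such substitutions (the Kronecker pairing on $W^{\otimes i}$ precisely records which substitutions are compatible). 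I would therefore argue that $\Phi_G$, extended $\Q$-linearly over all Eil graphs and evaluated on a product of basis commutators, computes exactly the extended configuration pairing of Theorem~\ref{LieEilPairing}, by reducing to the multilinear statement of Theorem~\ref{graph-symbol-pairing} one multilinear summand at a time.

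The main obstacle I anticipate is exactly this last compatibility: verifying that letter-linking respects the passage from the multilinear operad/cooperad level to the full (co)free (co)algebra on a generating set, i.e.\ that substituting repeated generators into distinct-vertex graphs and into commutators does not introduce spurious contributions to $\Phi_G$ beyond those recorded by the Kronecker pairing on $W^{\otimes i}$. Concretely, when the same generator labels several vertices of $G$ or appears in several commutator slots, one must check that the cobounding-and-intersecting process still factors through the distinct-letter computation, which is where the valence/validity conditions on symbols and the homogeneity analysis in the proof of Theorem~\ref{distinctspan} do their work. Once that compatibility is in hand, the separation-of-points conclusion — and hence surjectivity of $\Phi:\Q\sy_i \to \mathrm{Hom}(\gamma_i F_n/\gamma_{i+1}F_n,\Q)$ — follows formally from the perfectness of the pairing in Theorem~\ref{LieEilPairing}.
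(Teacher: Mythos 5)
Your overall assembly coincides with the paper's own proof: reduce symbols to symbol graphs (Proposition~\ref{reductionsurjective}, Theorem~\ref{graph-symbol}), identify letter-linking of distinct-vertex graphs with the configuration pairing, invoke the spanning result Theorem~\ref{distinctspan}, and finish with perfectness of the pairing (Theorem~\ref{LieEilPairing}). However, the step you explicitly defer --- upgrading Theorem~\ref{graph-symbol-pairing} from $UC(n)$ and $\gr(n)$ to commutators and graphs with repeated generators --- is exactly where the remaining technical content of the paper lives (its Theorem~\ref{lifts} and Corollary~\ref{last}), and your sketch of that step is a description of the desired answer rather than an argument. As written, this is a correct plan with a genuine gap at its load-bearing joint.

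Concretely, the paper proves (Theorem~\ref{lifts}): for $f\colon S \to T$ a map of generating sets, $w \in UC(n)$, and $\sigma$ a symbol on $S$ of depth $n$,
\[
\Phi_{f_* \sigma} (f_* w) \;=\; \sum_{p \in \Sigma_f} \Phi_{p \cdot \sigma} (w),
\]
and this requires two non-formal ingredients that the ``one multilinear summand at a time'' heuristic does not supply. First, when a generator repeats in $f_*w$, the list $\Lambda_a$ collects \emph{all} of its occurrences at once, across every preimage slot; that the cobounding-and-intersecting process nevertheless decomposes is the inductive list-level claim $\Lambda_{f_*\mu}(f_*w) \cong \bigcup_{\tilde\mu \,:\, f_*\tilde\mu = f_*\mu} \Lambda_{\tilde\mu}(w)$, proved by choosing coboundings compatibly with the bijection at each depth --- this is precisely the ``factoring through the distinct-letter computation'' you flag as the obstacle, and it does not follow from the $\otimes_{\sym_i}$ formalism alone. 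Second, the resulting sum a priori runs over \emph{all} lifts $\tilde\sigma$ with $f_*\tilde\sigma = f_*\sigma$, not just the $\Sigma_f$-orbit of $\sigma$; the spurious lifts must be shown to contribute zero, which the paper does using $w \in UC(n)$: a lift missing some letter of $w$ is insensitive to setting that letter equal to the identity, which collapses the commutator $w$ to the identity element, so $\Phi_{\tilde\sigma}(w) = 0$. Your appeal to ``the Kronecker pairing records which substitutions are compatible'' presupposes this vanishing rather than proving it. Once Theorem~\ref{lifts} and its consequence $\Phi_G(w) = \langle G, \lambda(w)\rangle$ (Corollary~\ref{last}) are in hand, your argument closes exactly as the paper's does; the separation-of-points reformulation you lead with is fine but does no extra work beyond the perfectness statement.
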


Before proving this, we need a last calculational tool, motivated by the operadic approach to free Lie algebras.
We relate values of our letter-linking invariants under homomorphisms 
 induced by set maps of generators.
 
 \begin{definition}
 Let $F_S$ denote the free group on the generating set $S$.
 Let $f : S \to T$ be a map of generating sets, and let $f_*$ denote the induced homomorphism on free groups as well as the induced
 map on the set of symbols.
 Let $\Sigma_f$ denote the automorphisms of $S$ which commute with $f$.
 \end{definition}
 
The automorphisms $\Sigma_f$ are  isomorphic to a product of symmetric groups.

\begin{theorem}\label{lifts}
Let $f : S \to T$ be a map of (generating) sets, $w \in UC(n)$ and $\sigma$ a symbol on $S$ of depth $n$.  Then 
$$\Phi_{f_* \sigma} (f_* w) = \sum_{p \in \Sigma_f} \Phi_{p \cdot \sigma} (w).$$
\end{theorem}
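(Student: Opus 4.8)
The plan is to understand both sides of the asserted identity by examining the lists that compute them and matching occurrences of letters.
First I would set up notation carefully: write $\sigma$ as a symbol on $S$ of depth $n$, so $w \in UC(n)$ is a commutator in which each generator of $S$ occurs exactly once.
The left side $\Phi_{f_*\sigma}(f_* w)$ is computed from the word $f_* w$, obtained by applying $f$ to each letter of $w$; the symbol $f_*\sigma$ is obtained by relabeling each free letter $s$ of $\sigma$ by $f(s)$.
The key observation is that when we form the standard lists $\Lambda_{f_*\sigma}(f_* w)$ and choose coboundings, every occurrence of a generator $t \in T$ in $f_* w$ comes from a unique occurrence of some $s \in f^{-1}(t)$ in $w$, since each generator of $S$ occurs uniquely in $w$.

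The heart of the argument will be to decompose the list-theoretic computation on the $T$-side as a sum over the ways of ``resolving'' each letter $t$ back to a specific preimage in $f^{-1}(t)$.
Concretely, I would argue that in building the iterated linking list with symbol $f_*\sigma$ on $f_* w$, at each stage where a list $\Lambda_{f(s)}(f_* w)$ is used, its associated function is the sum over $s' \in f^{-1}(f(s))$ of the indicator functions of the occurrences of $s'$ in $w$.
Expanding the intersections and coboundings multilinearly over these sums, each term of the expansion corresponds to a choice, for every vertex of $\sigma$, of which preimage generator it ``really uses'' — and this choice is exactly a permutation $p \in \Sigma_f$ applied to $\sigma$, since $\Sigma_f$ permutes the generators within each fiber of $f$.
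Because $w \in UC(n)$ has each $S$-generator appearing once, a given assignment of preimages either reproduces the list $\Lambda_{p\cdot\sigma}(w)$ for a genuine $p \in \Sigma_f$, or it assigns two vertices to the same $S$-generator; I expect the latter terms to contribute zero, since the relevant linking list would involve a generator linking against itself, violating the validity (distinct-letter) condition and forcing the corresponding cobounding-intersection to cancel in pairs.

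The cleanest route is probably to induct on the depth $n$, using the commutator-style factorization $w = [w_1, w_2]$ and the inductive structure already exploited in the proof of Theorem~\ref{graph-symbol-pairing}.
Splitting $f_*\sigma$ as $(\sigma_1)\sigma_2$ at its outer bracket and applying the same cobounding choices — cobounding the outer free letter across $w_1$ and $w_1^{-1}$ — reduces the left side to a product $\Phi_{f_*\sigma_1}(f_* w_1)\,\Phi_{f_*\sigma_2}(f_* w_2)$ up to sign, to which the inductive hypothesis applies on each factor; the right side factors compatibly because $\Sigma_f$ is a product of symmetric groups and a permutation of $S$ commuting with $f$ splits as a pair of such permutations on the generators appearing in $w_1$ and $w_2$ respectively.
\emph{The main obstacle} will be bookkeeping the cross terms and signs: I must verify that when a permutation $p \in \Sigma_f$ is \emph{not} of the split form (i.e.\ it mixes generators between the $w_1$ and $w_2$ blocks), the term $\Phi_{p\cdot\sigma}(w)$ vanishes because the relevant standard lists then have their occurrences entirely within one sub-word with no surviving intersections, so that the sum on the right reduces exactly to the product over the two blocks.
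Once that vanishing and the multiplicativity of the configuration pairing (Theorem~\ref{graph-symbol-pairing}) are reconciled, the inductive step closes and the base case $n=1$ is the immediate statement that $\Phi_{f(s)}(f_* w)$ counts the abelianized $f(s)$-content of $f_* w$, which equals the sum of the $s'$-contents of $w$ over $s' \in f^{-1}(f(s))$.
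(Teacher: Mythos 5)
There is a genuine gap, and it sits in the step you call ``the cleanest route.'' The inductive factorization you propose --- splitting $w=[w_1,w_2]$ and $f_*\sigma=(\sigma_1)\sigma_2$ and reducing the left side to $\pm\,\Phi_{f_*\sigma_1}(f_*w_1)\,\Phi_{f_*\sigma_2}(f_*w_2)$ --- is borrowed from the proof of Theorem~\ref{graph-symbol-pairing}, but that argument uses in an essential way that the generators occurring in $w_1$ are disjoint from those in $w_2$. This is exactly what fails after applying $f_*$: the whole content of Theorem~\ref{lifts} is that $f_*w_1$ and $f_*w_2$ may share letters, so $\Lambda_{f_*\sigma_1}(f_*w)$ has occurrences in both blocks and cobounding intervals stretch across them. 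Concretely, take $S=\{a,b,c\}$, $T=\{x,y\}$, $f(a)=f(c)=x$, $f(b)=y$, $w=[[a,b],c]$, $\sigma=((a)b)c$. Writing out $f_*w=[[x,y],x]$ as a ten-letter word and computing from the definitions gives $\Phi_{((x)y)x}(f_*w)=2$, whereas your product formula gives $\Phi_{(x)y}([x,y])\cdot\Phi_{x}(x)=1$. Correspondingly, the ``mixing'' cross term you claim vanishes does not: here $\Sigma_f=\{\mathrm{id},(a\,c)\}$, and the permutation $(a\,c)$, which mixes the $w_1$-letters $\{a,b\}$ with the $w_2$-letter $c$, contributes $\Phi_{((c)b)a}([[a,b],c])=+1$. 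One sees this either from Theorem~\ref{graph-symbol-pairing} (pair the linear graph on $c,b,a$ with $T_{[[a,b],c]}$; both edges have sign $-1$, so the pairing is $+1$) or by hand: in $aba^{-1}b^{-1}\,c\,bab^{-1}a^{-1}\,c^{-1}$ the $c$-$c^{-1}$ interval catches the $b$-$b^{-1}$ pair $bab^{-1}$, whose cobounding in turn catches one $a$. So the identity $2=1+1$ holds only because the failure of multiplicativity on the left is exactly compensated by the nonvanishing mixing terms on the right; both of your two inductive claims are false, and your induction cannot close.

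Your first idea --- expanding the lists multilinearly over choices of preimage for each vertex --- is in fact the paper's actual strategy: one proves by induction on the symbol that $\Lambda_{f_*\mu}(f_*w)$ is in natural bijection with the union of $\Lambda_{\widetilde{\mu}}(w)$ over \emph{all} lifts $\widetilde{\mu}$ of $f_*\mu$ (the needed coboundings exist because lower-depth invariants vanish on $w$ by Theorem~\ref{definedonlcs}), whence $\Phi_{f_*\sigma}(f_*w)=\sum_{\widetilde{\sigma}}\Phi_{\widetilde{\sigma}}(w)$, summed over all lifts. But your mechanism for discarding the lifts not of the form $p\cdot\sigma$ is wrong. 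Such lifts repeat a letter only at \emph{non-neighboring} levels --- in the example above they are $((a)b)a$ and $((c)b)c$ --- and these are perfectly valid symbols: validity constrains only neighboring levels, and neighboring levels of $f_*\sigma$ already carry distinct letters of $T$, so every choice of preimages is automatically valid. No self-linking occurs and nothing cancels in pairs locally. The reason these terms vanish is global: a lift with a repeated letter must omit some generator that occurs in $w$; replacing that generator by the identity leaves the lists for $\widetilde{\sigma}$ unchanged but collapses $w$ --- a commutator in which every generator occurs exactly once --- to the identity element, forcing $\Phi_{\widetilde{\sigma}}(w)=0$. Without this argument, or a replacement for it, even the expansion route does not close; with it, the theorem follows with no induction on the bracket structure of $w$ at all.
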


We use this to understand letter-linking invariants with repeated letters, relating them to those with
unique letters, which are understood through Theorem~\ref{graph-symbol-pairing}.  

\begin{proof}[Proof of Theorem~\ref{lifts}]

We first show that 
$$\Phi_{f_* \sigma} (f_* w) = \sum_{ \widetilde{\sigma} |  f_*(\widetilde{\sigma}) = f(\sigma)} \Phi_{\widetilde{\sigma}} (w).$$
The sum contains the sum named in the theorem, along with additional terms which we will show vanish.

We prove this equality through analysis of lists, showing inductively that 
$\Lambda_{f_* \mu} (f_* w) \cong \bigcup  \Lambda_{\widetilde{\mu}} (w)$ -- that is, that these are in bijective correspondence respecting $f$ -- 
where the  union is over $\widetilde{\mu}$ such that $f_*(\widetilde{\mu}) = f_*(\mu)$.
For $\mu$ of depth zero, that is lists of occurrences of some generator, this is immediate.
Suppose this equality of lists holds for $\mu_1$ and $\mu_2$ of depth less than $n$.  By Theorem~\ref{definedonlcs} all of the $\Phi_{\widetilde{\mu_1}} (w)$
vanish, so we may choose all $d^{-1} \Lambda_{\widetilde{\mu_1}} (w)$.  Through our inductive bijection, the images of these under $f_*$ gives a choice of 
$d^{-1} \Lambda_{f_* \mu_1} (f_* w)$.  Moreover, each intersection of $\Lambda_{f_* \mu_2} (f_*w)$ with this cobounding corresponds to  the intersection of
some $ \Lambda_{\widetilde{\mu_2}} (w)$ with a $d^{-1} \Lambda_{\widetilde{\mu_1}} (w)$.  Through the bijective correspondence of
the product of the set of $\widetilde{\mu_1}$ over $\mu_1$
and the set of $\widetilde{\mu_2}$ over $\mu_2$ with the set of $\widetilde{(\mu_1) \mu_2}$ over $(\mu_1) \mu_2$, we establish
our inductive step that $\Lambda_{f_* (\mu_1) \mu_2 } (f_* w) \cong \bigcup  \Lambda_{\widetilde{(\mu_1) \mu_2}} (w)$ and thus
our first equality.

To deduce the equality of the theorem we see that  terms in the sum 
 $\sum_{ \widetilde{\sigma} |  f_*(\widetilde{\sigma}) = f(\sigma)} \Phi_{\widetilde{\sigma}} (w)$  which are not of the 
form $p \cdot \sigma$ for $p \in \Sigma_f$ must have some repeated letter.  But  $w \in UC(n)$, so there will be at least one  letter 
which occurs in $w$ but not $\widetilde{\sigma}$.  That letter can then be removed from $w$ without changing $\Lambda_{\widetilde{\sigma}}$.  
But removing the letter from $w$ is equivalent to replacing the letter by the identity element.  Since $w$ is a commutator the resulting word would
represent the identity element.
\end{proof}

We now extend Theorem~\ref{graph-symbol-pairing} from graphs with unique vertex labels to all distinct-vertex graphs.

\begin{definition}
Let $w \in \gamma_i F_n$, and let $W$ be the span of generators of $F_n$. We set the {\bf Lie image} of $w$,
denoted  $\lambda(w) \in \fL_n \cong \Lie(i) \otimes W$, 
to be the image of $w$ in $ \gamma_i F_n / \gamma_{i+1} F_n$, composed with its isomorphism with the $i$th graded component
of the free Lie algebra.
\end{definition}

In particular, $\lambda(w)$ converts a commutator in the generators (which we call a {\bf basic commutator}) 
to the corresponding Lie bracket.  Theorem~\ref{lifts} leads to the following.

\begin{corollary}\label{last}
$\Phi_G (w) = \langle G, \lambda(w) \rangle$, where $\langle -,  -\rangle$ denotes the configuration pairing.
\end{corollary}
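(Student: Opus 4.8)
The plan is to deduce the corollary from the unique-label case, Theorem~\ref{graph-symbol-pairing}, by using the lifting formula of Theorem~\ref{lifts} to pass between repeated and distinct generators, and then to recognize the resulting sum as the configuration pairing through the (co)free structure of Theorem~\ref{LieEilPairing}. First I would reduce to the case where $w$ is a basic commutator: both $\Phi_G(-)$ and $\langle G, \lambda(-)\rangle$ are linear and, by Corollary~\ref{lcsdefined}, descend to $\gamma_i F_n/\gamma_{i+1} F_n$; since they factor through $\lambda(w) \in \fL_n$ and basic commutators span the weight-$i$ part of $\fL_n$, it suffices to verify the identity on them. I may also assume the number of vertices of $G$ matches the weight of $w$, as otherwise both sides vanish.

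Next I would introduce fresh generators. Given a basic commutator $w$ of weight $i$ in the generating set $T$, relabel each of its letters by a distinct symbol to obtain $\tilde w \in UC(i)$ on a set $S$, together with the collapsing map $f \colon S \to T$ satisfying $f_* \tilde w = w$. Because $f_*$ respects the lower central series, $\lambda(w) = f_* \lambda(\tilde w)$, and $\lambda(\tilde w)$ is the tree $T_{\tilde w}$. I would lift $G$ in the same way to the unique-label graph $\tilde G \in \gr(i)$ with $f_* \tilde G = G$. Using Theorem~\ref{graph-symbol} and Proposition~\ref{reductionsurjective}, write $\Phi_{\tilde G} = \Phi_{\rho_V \tilde G}$ with $\rho_V \tilde G = \sum_j c_j \sigma_j$ a combination of symbols on $S$. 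The key naturality point, which I would verify, is that reduction commutes with relabeling, so that $\rho_V G = f_*(\rho_V \tilde G) = \sum_j c_j\, f_* \sigma_j$ and likewise $\rho_V(p\cdot \tilde G) = p \cdot (\rho_V \tilde G)$ for $p \in \Sigma_f$.

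Now I would run the computation. Applying Theorem~\ref{lifts} to each symbol $\sigma_j$ (of the appropriate depth) with the word $\tilde w$ gives $\Phi_{f_* \sigma_j}(w) = \sum_{p \in \Sigma_f} \Phi_{p \cdot \sigma_j}(\tilde w)$, so that
\[
\Phi_G(w) = \sum_j c_j\, \Phi_{f_*\sigma_j}(w) = \sum_{p \in \Sigma_f} \sum_j c_j\, \Phi_{p\cdot \sigma_j}(\tilde w) = \sum_{p \in \Sigma_f} \Phi_{p\cdot \tilde G}(\tilde w).
\]
Since each $p \cdot \tilde G$ lies in $\gr(i)$ and $\tilde w \in UC(i)$, Theorem~\ref{graph-symbol-pairing} evaluates each term, yielding $\Phi_G(w) = \sum_{p \in \Sigma_f} \langle p \cdot \tilde G, T_{\tilde w}\rangle$.

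The main obstacle is the final identification $\sum_{p \in \Sigma_f} \langle p \cdot \tilde G, T_{\tilde w}\rangle = \langle G, \lambda(w)\rangle$. Here I would unwind the description of the configuration pairing from Theorem~\ref{LieEilPairing}: under $\E_n \cong \bigoplus_m \Eil(m) \otimes_{\sym_m} W^{\otimes m}$ and $\fL_n \cong \bigoplus_m \Lie(m)\otimes_{\sym_m} W^{\otimes m}$, the element $G = f_* \tilde G$ and the element $\lambda(w) = f_* T_{\tilde w}$ are represented with underlying shapes $\tilde G$, $T_{\tilde w}$ and common label sequence $f$. Pairing over the coinvariants introduces a sum over $\sym_i$ in which the Kronecker pairing of the $W^{\otimes i}$ factors vanishes except on the permutations preserving the fibers of $f$ — which are exactly the elements of $\Sigma_f$ — and on each such $p$ the operadic factor contributes $\langle p\cdot \tilde G, T_{\tilde w}\rangle$. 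Checking this bookkeeping, in particular the agreement of signs and the precise matching of $\Sigma_f$ with the fiber-preserving permutations, is the delicate step; once it is in place the two sums coincide and the corollary follows.
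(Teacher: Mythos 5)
Your proposal is correct and follows essentially the same route as the paper's proof: reduce by linearity to basic commutators, lift $w$ and $G$ to unique letters via a collapsing map $f$, apply Theorem~\ref{lifts} to get a sum over $\Sigma_f$, evaluate each term with Theorem~\ref{graph-symbol-pairing}, and observe that the remaining terms in the extended pairing $\Eil(i)\otimes_{\sym_i} W^{\otimes i}$ vanish under the Kronecker pairing. Your write-up is somewhat more explicit than the paper's (notably in checking that reduction commutes with relabeling), but the argument is the same.
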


\begin{proof}
By linearity, it suffices to consider basic commutators.
Let $w$ be a basic commutator and $\tilde{w}$ be a basic commutator in unique letters 
so that $f_*(\tilde{w}) = w$ for some map of generating sets $S$.  For $\Phi_G (w)$ to be non-zero
there must be a $\widetilde{G}$ with $f_*(\widetilde{G}) = G$.  Theorem~\ref{lifts} then gives a formula for $\Phi_G(w)$.  But
we can apply Theorem~\ref{graph-symbol-pairing} to every term in the right-hand side.  Doing so we obtain terms in the definition 
of $\langle G, \lambda(w) \rangle$, which is the extension of the pairing between $\Eil(i)$ and $\Lie(i)$ and Kronecker pairing
on $W$ to 
$\Eil(i) \otimes_{\si_n} W^{\otimes i}$ and $\Lie(i) \otimes_{\si_n} W^{\otimes i}$.
The terms in this extension which do not occur in the application of Theorem~\ref{lifts} will not contribute to this sum, as the Kronecker pairing will be zero.
\end{proof}

The proof of our main result is now a matter of assembly.

\begin{proof}[Proof of Theorem~\ref{main}]
 By Proposition~\ref{reductionsurjective}, any symbol $\sigma \in \sy_n = \gs_{n,0}$ 
 is the reduction of some graph $G$ in $\gs_{0,n}$. 
 By Corollary~\ref{last}, the values of $\Phi_\sigma$ on $\gamma_i F_n$ coincide with the configuration pairing
 of $G$ on the $i$-graded summand of $\fL_n$.
 By Theorem~\ref{distinctspan}, configuration pairings with distinct-vertex graphs span the functionals given by all graphs.   
 By Corollary~3.3 of \cite{SiWa11} pairing with all such graphs modulo Arnold and antisymmetry is perfect
 on this $i$-graded summand, which is isomorphic to $\gamma_i F_n / \gamma_{i+1} F_n$.
\end{proof}

\subsection{Comparison with Fox derivatives}

There is already a well-known collection of homomorphisms which span the linear dual of the lower-central series Lie algebra for 
free groups, namely those
given by  Fox's free differential calculus \cite{Fox53, CFL58}, whose definition we recall below.    These differ from the functionals we 
provide in substantial ways.

\begin{itemize}
\item Fox derivatives span homomorphisms to the integers, while letter-linking homomorphisms only span over the rationals.  See  Appendix~\ref{values}.
\item Fox derivatives are defined on the entire free group, while letter-linking homomorphisms are only defined on subgroups.
\item As shown below, Fox derivatives correspond to evaluation of 
the linear graph spanning set for the cofree Lie coalgebra $\E_n$, while letter-linking homomorphisms correspond to evaluation of the
distinct-vertex spanning set.  
\item Fox derivatives, as developed in part by Chen, Fox and Lyndon \cite{CFL58}, 
are more immediately compatible with the Chen model of rational homotopy theory while 
letter-linking homomorphisms are drawn from the Quillen model.
\item For hand calculations, letter-linking numbers involve fewer calculations, though we conjecture below that Fox derivatives could be modified
to involve similar calculation. 
\item For fundamental groups of punctured surfaces, letter-linking invariants immediately
give rise to lower bounds on the complexity of curves which represent elements
of $\gamma_i F_n$.
\end{itemize}

With our eyes towards applications to mapping class groups (first author) and non-simply connected
rational homotopy theory (second author) we believe the first two 
properties in which letter-linking homomorphisms are inferior are worth trading for the properties in which 
they are superior.

We now make the  connection between Fox derivatives and our model for cofree Lie coalgebras, starting with the definition
of the former.

\begin{definition}
Let $F_n$ be the free group on $n$ generators and let $\alpha: \Z[F_n]\rightarrow \Z$ be the augmentation. 
A derivation $D$ is a map $D: \Z[F_n] \rightarrow \Z[F_n]$  such that 
\begin{enumerate}
\item $D(u+v)=Du+Dv $
\item $D(uv)=Du\cdot \alpha(v)+ u \cdot Dv$
\end{enumerate}
\end{definition}

\begin{theorem}\cite{Fox53}
Let $x_1,\ldots, x_n$ denote the generators of the free group $F_n$.  There is a unique derivation
\[
\frac{\partial}{\partial x_i} : \Z [F_n] \rightarrow \Z[F_n]
\]
such that $\frac{\partial}{\partial x_i}(x_j)=\delta_{i,j}$, 
the Kronecker delta.  We call this the Fox derivative with respect to $x_i$ and denote it by $\partial_{x_i}$.  
\end{theorem}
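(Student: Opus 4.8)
The plan is to separate the argument into \emph{uniqueness}, which follows by reducing every group ring element to the generators, and \emph{existence}, which I would establish by a split-extension construction that sidesteps checking the Leibniz rule against cancellations in reduced words.

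For uniqueness, I would first observe that any derivation $D$ kills $1$: applying condition (2) to $1 = 1 \cdot 1$ gives $D(1) = D(1)\alpha(1) + 1 \cdot D(1) = 2 D(1)$, so $D(1) = 0$. Applying (2) to $1 = x_j x_j^{-1}$ and using $\alpha(x_j^{-1}) = 1$ then forces $D(x_j^{-1}) = -x_j^{-1} D(x_j)$, so $D$ is pinned down on all inverse generators by its values on the $x_j$. For a general reduced word $w = y_1 \cdots y_k$ with each $y_\ell \in \{x_1^{\pm 1}, \ldots, x_n^{\pm 1}\}$, iterating (2) and using $\alpha(g) = 1$ for every group element yields
\[
D(w) = \sum_{\ell = 1}^{k} y_1 \cdots y_{\ell - 1}\, D(y_\ell),
\]
so $D(w)$ is completely determined by the $D(x_j)$. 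Since condition (1) extends $D$ additively from group elements to all of $\Z[F_n]$, specifying $D(x_j) = \delta_{i,j}$ determines $\partial_{x_i}$ uniquely.

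For existence, rather than verify that the formula above respects the relations defining $F_n$ — which would require bookkeeping through each cancellation — I would build $\partial_{x_i}$ from a homomorphism out of $F_n$. Viewing $M = \Z[F_n]$ as a left module over itself, I would form the set $E = M \times F_n$ with the twisted multiplication $(m, g)(m', g') = (m + g\, m',\, g g')$ and check the routine group axioms, with identity $(0,1)$ and inverse $(m,g)^{-1} = (-g^{-1} m,\, g^{-1})$. By the universal property of the free group there is a unique homomorphism $\phi : F_n \to E$ with $\phi(x_j) = (\delta_{i,j}, x_j)$. Composing $\phi$ with the projection $E \to F_n$ recovers the identity on $F_n$, so $\phi(g) = (D(g), g)$ for a well-defined function $D : F_n \to M$; unwinding $\phi(gh) = \phi(g)\phi(h)$ gives exactly $D(gh) = D(g) + g\, D(h)$, which is condition (2) for group elements since $\alpha(h) = 1$. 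Extending $D$ additively over $\Z[F_n]$ via (1) and checking (2) bilinearly on pairs of group elements (where it already holds) then produces a derivation with $\partial_{x_i}(x_j) = D(x_j) = \delta_{i,j}$.

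The only genuine subtlety is existence, and the whole point of the split-extension construction is to discharge it cleanly: associativity of the twisted product on $E$ encodes precisely the compatibility that a direct definition on reduced words would have to verify by hand against each cancellation $x_j x_j^{-1} = 1$. I therefore expect the main (still routine) obstacle to be only the confirmation that $E$ is a group and that the additive extension of $D$ satisfies the twisted Leibniz rule for arbitrary ring elements; both reduce to the group-element case already handled.
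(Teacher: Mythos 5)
Your proposal is correct. Note, however, that the paper does not prove this statement at all --- it quotes the theorem with a citation to Fox's original article --- so there is no internal proof to compare against; you have supplied an argument where the paper has none. Your uniqueness argument (derive $D(1)=0$, then $D(x_j^{-1})=-x_j^{-1}D(x_j)$, then the iterated Leibniz formula $D(y_1\cdots y_k)=\sum_\ell y_1\cdots y_{\ell-1}D(y_\ell)$, then extend by additivity) is complete and uses only the two defining conditions together with $\alpha(g)=1$ for group elements. Your existence argument is the standard identification of derivations on $\Z[F_n]$ restricted to $F_n$ with $1$-cocycles, i.e.\ with splittings of the split extension $\Z[F_n]\rtimes F_n\to F_n$; its virtue, as you say, is that the universal property of the free group absorbs all well-definedness issues, and the bilinear extension of condition (2) from group elements to arbitrary ring elements is the routine computation $\sum_{g,h}n_gm_h\bigl(D(g)+gD(h)\bigr)=D(u)\alpha(v)+u\,D(v)$. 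For comparison, Fox's own existence proof rests on the fact that the augmentation ideal of $\Z[F_n]$ is a free left $\Z[F_n]$-module on the elements $x_j-1$, so that writing $u-\alpha(u)=\sum_j u_j(x_j-1)$ uniquely and setting $\partial_{x_j}(u)=u_j$ defines all the derivatives at once; that route costs a module-theoretic lemma but yields the fundamental formula $u-\alpha(u)=\sum_j \partial_{x_j}(u)\,(x_j-1)$, which is itself heavily used in applications of the free differential calculus. Either approach is acceptable; yours is self-contained and arguably the cleaner way to discharge existence in isolation.
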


This derivation is then iterated. 

\begin{definition}
Let $c = a_1, \ldots, a_k$ be some collection of the generators $x_1,\ldots, x_n$, with repeats  allowed.  
 For $v \in \Z [F_n]$ inductively define
\[
\partial_{a_1,\cdots, a_k}(v)=\partial_{a_1}(\partial_{a_2,\cdots ,a_k}(v)).
\]
Define $\partial_c^\circ(v)$ to be $\alpha(\partial_c(v))$.

\end{definition}

In Applendix~\ref{examplecalculation} we give an example of a Fox derivative calculation.  Informally, a 
derivative takes every monomial and produces a sum of monomials by ``cutting'' it at each occurrence of a generator, with signs.
For example, $\partial_{a,b}$ will cut along occurrences of $b$ then along occurences of 
$a$, and then through augmentation count the resulting monomials with signs.
Effectively, this counts occurrences of an $a$ followed by a $b$.  But for any $a$ followed by  an $a^{-1}$, any subsequent $b$'s
will be counted with both a $+1$ for the  $a$ and a $-1$ for the $a^{-1}$.  Thus we could streamline the calculation by counting only $b$'s between
$a$-$a^{-1}$ pairs -- that is, the count $\Phi_{(a)b}$.

After depth three, Fox derivatives and letter-linking invariants do not coincide, as follows from calculations in  
Appendix~\ref{values} and its generalizations.
But we could express Fox derivatives in the same graphical context we use in Section~\ref{examples}  to calculate and
understand our letter-linking invariants.

In \cite{CFL58} the authors produce a collection of $c=a_1\cdots a_k$ so that $\partial_c^\circ$ 
form a basis for the dual space of each $\gamma_i F_n/ \gamma_{i+1}F_n$.  We produce a new proof of this fact in
order to compare  Fox derivatives with letter-linking invariants, starting with the analogue of Corollary~\ref{last}.

\begin{theorem}
Let $c=a_1, \cdots ,a_k$ and let $G_c$ be the graph $\longgraph{a_1}{a_{2}}{\cdots}{a_{k}}$, and let $w \in \gamma_k F_n$.   
Then $\partial^\circ_c w = \langle G_c, \lambda (w) \rangle$.
\end{theorem}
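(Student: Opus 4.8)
The plan is to show that both sides compute the coefficient of the monomial $X_{a_1}X_{a_2}\cdots X_{a_k}$ in the image of $w$ under the Magnus embedding $\mu\colon \Z[F_n]\to \widehat{T}(W)$ into the completed tensor algebra on $W = \langle X_1,\ldots,X_n\rangle$, given by $x_i\mapsto 1+X_i$. For the Fox side I would start from the fundamental formula of the free calculus, $v-\alpha(v)=\sum_i(\partial_{x_i}v)(x_i-1)$, apply the ring homomorphism $\mu$ (noting $\mu(x_i-1)=X_i$), and iterate, using $\partial_{a_1,\ldots,a_k}=\partial_{a_1}\circ\partial_{a_2,\ldots,a_k}$. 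This expresses $\mu(w)=1+\sum \partial^\circ_{b_1,\ldots,b_j}(w)\,X_{b_1}\cdots X_{b_j}$, so that the coefficient of $X_{a_1}\cdots X_{a_k}$ is exactly $\partial^\circ_c(w)$. Since $w\in\gamma_kF_n$, all terms of $\mu(w)$ of degree less than $k$ vanish, and by the Magnus--Witt theorem the degree-$k$ term is precisely $\lambda(w)$ under the standard inclusion $\fL_n\hookrightarrow T(W)$. Hence $\partial^\circ_c(w)$ equals the coefficient of $X_{a_1}\cdots X_{a_k}$ in $\lambda(w)$.

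It then remains to identify the configuration pairing $\langle G_c,\lambda(w)\rangle$ with this same coefficient. I would prove the clean statement that for every $\ell\in\fL_n$ of degree $k$, the pairing $\langle G_c,\ell\rangle$ equals the coefficient of $X_{a_1}\cdots X_{a_k}$ in the image of $\ell$ in $T(W)$. By linearity it suffices to treat $\ell=\lambda(w)$ for a basic commutator, and I would induct on $k$ by writing $\ell=[\ell_1,\ell_2]=\ell_1\ell_2-\ell_2\ell_1$, with $T_w$ grafting $T_1$ (left) and $T_2$ (right). On the tensor side the monomial coefficient splits into the two products of sub-coefficients according to whether the prefix of the monomial is read off $\ell_1$ or off $\ell_2$. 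On the pairing side, Definition~\ref{D:confpair} yields a nonzero contribution only when exactly one edge of the path $G_c$ has its $gcv$ at the root; this forces the leaves of $T_w$ to split into a contiguous prefix lying in $T_1$ and a suffix in $T_2$ (contributing $+\langle\text{prefix},T_1\rangle\langle\text{suffix},T_2\rangle$), or the reverse (contributing $-\langle\text{prefix},T_2\rangle\langle\text{suffix},T_1\rangle$), the signs coming from the left/right ordering rule. These two recursions coincide, completing the induction. When the $a_i$ happen to be distinct this identity is also immediate from Theorem~\ref{graph-symbol-pairing}, since $G_c\in\gr(k)$ and reduces to the left-normed symbol.

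To pass from distinct letters to an arbitrary collection $c$ (where $G_c$ may carry homogeneous edges, so that $\Phi_{G_c}$ is undefined and one cannot directly invoke Corollary~\ref{last}), I would mirror the proof of Corollary~\ref{last}: reduce to basic commutators, lift along a map of generating sets $f\colon S\to T$ to the unique-letter case, and check that both $\partial^\circ$ and the extended pairing, built from the Kronecker pairing on $W^{\otimes k}$, obey the same sum over $\Sigma_f$ as in Theorem~\ref{lifts}. The main obstacle is the induction of the second paragraph: matching the surjectivity hypothesis and the left/right sign convention of the configuration pairing against the exact signs and vanishing pattern of the monomial-coefficient recursion, and confirming that the Kronecker-extended pairing agrees termwise with the $\Sigma_f$-sum on the Fox side when letters repeat. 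Assembling the first two paragraphs with this reduction then gives $\partial^\circ_c(w)=\langle G_c,\lambda(w)\rangle$.
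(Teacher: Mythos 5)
Your proposal is correct in substance, but it takes a genuinely different route from the paper. The paper's proof is a two-citation comparison: it quotes Equation (3.3) of \cite{CFL58}, which gives the recursion $\partial^\circ_c([u,v])= \partial^\circ_{c_f}(u)\partial^\circ_{c_l}(v) - \partial^\circ_{c'_l}(u)\partial^\circ_{c'_f}(v)$, and the bracket--cobracket formula $\langle G_c, \lambda([u,v])\rangle = \;] G_c [ \; u\otimes v$ of Corollary 3.14 of \cite{SiWa11}; it computes $] G_c [$ for a linear graph, observes that the two recursions coincide, and concludes by induction on commutator weight. You instead interpose the Magnus embedding as a middleman: Fox derivatives are the word-coefficients of the Magnus expansion (your first paragraph, via iteration of the fundamental formula), the pairing against $G_c$ is also the word-coefficient functional on the tensor algebra (your second paragraph, an induction that in effect re-derives the cobracket formula for linear graphs), and the two meet at the coefficient of $X_{a_1}\cdots X_{a_k}$ in $\lambda(w)$. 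The underlying engine is the same---both sides satisfy the same splitting under $[\ell_1,\ell_2]\mapsto \ell_1\ell_2-\ell_2\ell_1$, and your signs and vanishing pattern (at most one edge of the path may cross the root) are exactly right---but your organization is different. What your route buys is self-containedness: you derive both recursion inputs rather than citing them, and you make explicit the fact, which the paper only remarks on after its proof, that both functionals compute coefficients of the map from $\fL_n$ to its universal enveloping algebra. What it costs is length, and the extra care you correctly flag for the Kronecker-extended pairing when letters repeat.

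Two small corrections. First, your parenthetical claim that for distinct $a_i$ the identity of your second paragraph is immediate from Theorem~\ref{graph-symbol-pairing} is off: that theorem identifies $\langle G, T_w\rangle$ with the letter-linking invariant $\Phi_G(w)$, not with any tensor-algebra coefficient, so it provides no shortcut here. Second, your third paragraph is heavier than needed: Theorem~\ref{lifts} and $\Sigma_f$ concern the invariants $\Phi$, which do not appear in the statement being proved. All that is required is that both sides are natural under maps of generating sets---for the Magnus expansion this is substitution of variables, and for the extended pairing it is built into the definition of the pairing on $\Eil(i)\otimes_{\sym_i}W^{\otimes i}$---or, more simply, one can run your second-paragraph induction directly with the extended pairing, for which the bracket--cobracket recursion holds verbatim.
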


\begin{proof}
Equation (3.3) of \cite{CFL58} states that for $u \in \gamma_i F_n$, $v \in \gamma_j F_n$ with $i + j = k$,
$$\partial^\circ_c([u,v])= \partial^\circ_{c_f} (u) \partial^\circ_{c_l} (v)  - \partial^\circ_{c'_l} (u)  \partial^\circ_{c'_f} (v),$$
where $c_f = a_1, \cdots, a_i$, $c'_f = a_1, \cdots, a_j$, and $c_l$ and $c'_l$ are their complements in $c$.

We compare this equality  with the bracket-cobracket formula  
$\langle G_c, \lambda ([u,v]) \rangle = ] G_c [ u \otimes v$, established in Corollary 3.14 of \cite{SiWa11}.  
Since $] G_c [ = G_{c_f} \otimes G_{c_l} - G_{c'_l} \otimes G_{c'_f}$,  these formulae agree.
Because these formulae determine the values of the Fox derivatives and graph coalgebra pairings, they establish
the theorem inductively, starting with the weight zero case which is immediate.
\end{proof}

As developed in \cite{CFL58}, 
the bracket-cobracket reduction formula for Fox derivatives then shows that they yield the coefficients of 
the map from the free Lie algebra to its universal enveloping algebra, which is the tensor algebra.  
By Remark~1.5 of \cite{Walt20}, the resulting functionals on the free Lie algebra
are represented by the linear graph spanning set, or basis if one 
choses a subset of linear graphs such as 
Lyndon-Shirsov words.  But  such linear graphs are not generally distinct-vertex graphs.
We show in Appendix~\ref{values} that the spanning set for linear functions we develop, 
represented by distinct vertex graphs, is distinct from this classical spanning set.

Both letter-linking invariants and Fox derivatives are roughly order $n^d$ to compute, where $n$ is the length of the word and $d$ is the depth,
as both can be viewed as producing and counting with signs on the order of $n^d$ sub-words.
Based on the argument that $\partial_{a,b} = \Phi_{(a)b}$ above and the examples in Appendix~\ref{examplecalculation}, we
conjecture that with finer analysis the ``from the definition'' calculation of letter-linking invariants is more efficient 
than that of Fox derivatives, but that they become comparable to compute once
cancellation in Fox derivative expansion is systematically accounted for.  We suggest such analysis for further work, perhaps after
all of these techniques can be extended  to other groups, as suggested in the next section. 

\subsection{Further directions}

We expect our new insight into the lower central series Lie algebra of free groups, first 
studied by Magnus eighty years ago \cite{Magn37}, will have impact in a few directions. 

In algebra, an immediate question is whether and how letter-linking invariants could be generalized to arbitrary finitely presented groups. 
For example,  the fundamental group of the genus-two surface has four generators $a, \ldots d$ and the relation $[a,b]=[c,d]$.  We conjecture
there is a complete collection of letter-linking invariants which now include the linear combination $\Phi_{(a)b} + \Phi_{(c)d}$, but neither count on
its own.   We can see this invariant in the context of the formalism developed in this paper and in \cite{SiWa11} as follows.
The Lie coalgebraic bar complex on the cochains of a space provides the setting for Hopf invariants in higher dimensions \cite{SiWa13}.
In this paper the space in question has been a wedge of circles, whose cochains are equivalent to the first cohomology (that is, this space
is formal), resulting
in the bar complex being equivalent to the cofree Lie coalgebra $\E_n$.    In this setting of a surface, we still have formality, with the cohomology
generated by classes $A,B,C,D$, say Kronecker dual to the homology classes of $a,\ldots,d$, with the relation $AB = -CD$.  In the bar complex,
$\linep{A}{B}$ will not be a cycle, having coboundary $AB$, but $\linep{A}{B} + \linep{C}{D}$ will be a cycle.  It  will reduce to 
the proposed invariant $\Phi_{(a)b} + \Phi_{(c)d}$, whose well-definedness seems more delicate, restricted to the first commutator subgroup rather than the 
domain of definitions of these counts.
We expect the Lie coalgebraic bar complex to control these letter-linking invariants in general.
To our knowledge, Fox derivative techniques
have not been extended to the lower central series of other groups, so such an extension would break substantial new ground.

If  such bar complexes  produce the dual to the lower central series Lie algebra of a group, they could likely be merged with
the Lie coalgebraic models for rational homotopy in the simply connected setting.  A primary issue to resolve is that the notion of distinct vertices, which is
essential to defining letter-linking invariants, does not have a natural counterpart in higher dimensions.   
If such models can be developed, they could then be compared with 
new Lie models of Buijs-F\'elix-Murillo-Tanr\'e  \cite{BFMT18}, which are based on the Lawrence-Sullivan cylinder object \cite{LaSu14}.  These
new models are promising but have yielded relatively few calculations. 

There are plenty of elementary questions as well.  Even in the free group case, a finer comparison of Fox derivatives as counting
occurrences of sequences of letters and letter-linking invariants would be interesting.  While we know that distinct-vertex
graphs span cofree Lie coalgebras on a set of (co)generators, we have yet to find a basis.  It would be interesting to connect
such a basis, as well as closer
analysis of relations, to the literature on (distinctly) colored trees.  Looking at the examples
in Section~\ref{values}, it seems likely that understanding the values of that basis on free Lie algebras could lead to new bases for the latter.
These examples also point to the question of computing the indices of the functionals arising from letter-linking invariants 
within all integer-valued functionals.  One should decompose the free Lie algebra on a generating set 
into summands by the number of times each generator occur and compute on those summands, in which case so far we only see factorials arise. 



\appendix
\section{Further examples}
\subsection{Values on a Free Lie algebra basis}\label{values}

We choose a basis for our letter-linking invariants and share its values on a choice of Hall basis, 
which is also the Lyndon basis,
for $\gamma_5 F_2 / \gamma_6 F_2$.
This pairing decomposes into blocks, according to the number of times each generator, which we call $a$ and $b$, occur.

At the extremes, we have the  $[a,[a,[a,[a,b]]]]$, the only basis element with four $a$'s. 
Here there is a unique  linking invariant  symbol,
$(a)(a)(a)(a)b$, 
which is reduction of the distinct-vertex graph.
\begin{xy}  
  (0,0)*+UR{\scriptstyle a}="a1",    
  (6,0)*+UR{\scriptstyle b}="b1",   
  (12,0)*+UR{\scriptstyle a}="a2", 
  (6,6)*+UR{\scriptstyle a}="a3", 
   (6,-6)*+UR{\scriptstyle a}="a4", 
   "a1";"b1"**\dir{-}?>*\dir{>},
   "a2";"b1"**\dir{-}?>*\dir{>},
   "a3";"b1"**\dir{-}?>*\dir{>},
   "a4";"b1"**\dir{-}?>*\dir{>},  
 \end{xy}.  The value of the invariant is $24$.  
 The case of only one $a$ and four $b$'s is similar.

With three $a$'s and two $b's$ there are Hall basis elements $[a,[a,[[a,b],b]]]$ and $[[a,[a,b]],[a,b]]$.  Letter linking symbols are
$(((b)a)(a) b)a$ and $((((a)b)a)b)a$, the former being the reduction of \begin{xy}  
  (0,0)*+UR{\scriptstyle b}="b1",    
  (6,0)*+UR{\scriptstyle a}="a1",   
  (12,0)*+UR{\scriptstyle b}="b2", 
  (18,0)*+UR{\scriptstyle a}="a2", 
   (9,5)*+UR{\scriptstyle a}="a3", 
   "b1";"a1"**\dir{-}?>*\dir{>},
   "a1";"b2"**\dir{-}?>*\dir{>},
   "b2";"a2"**\dir{-}?>*\dir{>},
   "a3";"b2"**\dir{-}?>*\dir{>},  
 \end{xy} and the latter being the reduction of the ``linear'' graph \begin{xy}  
  (0,0)*+UR{\scriptstyle a}="a1",    
  (3,5)*+UR{\scriptstyle b}="b1",   
  (6,0)*+UR{\scriptstyle a}="a2", 
  (9,5)*+UR{\scriptstyle b}="b2", 
   (12,0)*+UR{\scriptstyle a}="a3", 
   "a1";"b1"**\dir{-}?>*\dir{>},
   "b1";"a2"**\dir{-}?>*\dir{>},
   "a2";"b2"**\dir{-}?>*\dir{>},
   "b2";"a3"**\dir{-}?>*\dir{>},  
 \end{xy} .
The pairing here is not a Kronecker pairing, being represented by the matrix $\begin{bmatrix}  4 & -2 \\ 4 & 4 \end{bmatrix}$.

Next, with two $a$'s and three $b$'s we have Hall basis elements $[a,[[[a,b],b,],b]]$ and $[[a,b],[[a,b],b]]$,  
and letter-linking symbols $(((a)b) (b)a)b$ and $((((b)a)b)a)b$, 
the former being the reduction of \begin{xy}  
  (0,0)*+UR{\scriptstyle a}="a1",    
  (6,0)*+UR{\scriptstyle b}="b1",   
  (12,0)*+UR{\scriptstyle a}="a2", 
  (18,0)*+UR{\scriptstyle b}="b2", 
   (9,5)*+UR{\scriptstyle b}="b3", 
   "a1";"b1"**\dir{-}?>*\dir{>},
   "b1";"a2"**\dir{-}?>*\dir{>},
   "a2";"b2"**\dir{-}?>*\dir{>},
   "b3";"a2"**\dir{-}?>*\dir{>},  
 \end{xy} and the latter being the reduction of the ``linear'' graph \begin{xy}  
  (0,0)*+UR{\scriptstyle b}="b1",    
  (3,5)*+UR{\scriptstyle a}="a1",   
  (6,0)*+UR{\scriptstyle b}="b2", 
  (9,5)*+UR{\scriptstyle a}="a2", 
   (12,0)*+UR{\scriptstyle b}="b3", 
   "b1";"a1"**\dir{-}?>*\dir{>},
   "a1";"b2"**\dir{-}?>*\dir{>},
   "b2";"a2"**\dir{-}?>*\dir{>},
   "a2";"b3"**\dir{-}?>*\dir{>},  
 \end{xy} .  Here the pairing represented by the matrix $\begin{bmatrix}  6 & -2 \\ 0 & 4 \end{bmatrix}$,
yielding the same determinant (index) as in the previous case.

Other choices for representative 
letter-linking invariants give the same results, up to sign.  Thus the Hall basis, which in this case is also the Lyndon basis, is not Kronecker 
in pairing with letter-linking invariants.    It would be interesting to see such a dual basis in general, since it seems like it would
have symmetry properties.  Bases which use orderings on the generating
set, both classical bases as well as new ones such as those in \cite{ShWa16},  do not have such symmetry, so this would be a new tool
in the study of free Lie algebras.

\subsection{Reduction to distinct vertex Eil graphs}  \label{reductionexample}

We reduce the graph
\begin{align*}
G=
\begin{xy}  
  (0,0)*+UR{\scriptstyle b}="b1",    
  (6,0)*+UR{\scriptstyle a}="a1",   
  (12,0)*+UR{\scriptstyle a}="a2", 
  (15,5)*+UR{\scriptstyle c}="c1", 
   (15,-5)*+UR{\scriptstyle d}="d1", 
   "b1";"a1"**\dir{-}?>*\dir{>},
   "a1";"a2"**\dir{-}?>*\dir{>},
   "a2";"c1"**\dir{-}?>*\dir{>},
   "a2";"d1"**\dir{-}?>*\dir{>},  
 \end{xy}  
\end{align*}
to a rational linear combination of distinct vertex graphs, following the procedure outlined in the proof of 
Theorem~\ref{distinctspan}.

 There is only one maximal subgraph of $a$'s, and it is already linear, so $G=G_0$. Thus, the first step is applying the Arnold  identity to the first two edges to get
\begin{align*}
G_0= \ - \
\begin{xy}                          
  (0,0)*+UR{\scriptstyle b}="b1",    
  (3,5)*+UR{\scriptstyle a}="a1",   
  (6,0)*+UR{\scriptstyle a}="a2", 
  (12,0)*+UR{\scriptstyle c}="c1", 
   (9,-5)*+UR{\scriptstyle d}="d1", 
   "a2";"b1"**\dir{-}?>*\dir{>},
   "a1";"a2"**\dir{-}?>*\dir{>},
   "a2";"c1"**\dir{-}?>*\dir{>},
   "a2";"d1"**\dir{-}?>*\dir{>},
 \end{xy} \ - \ 
 \begin{xy}                           
  (6,0)*+UR{\scriptstyle b}="b1",    
  (0,0)*+UR{\scriptstyle a}="a1",   
  (12,0)*+UR{\scriptstyle a}="a2", 
  (15,5)*+UR{\scriptstyle c}="c1", 
   (15,-5)*+UR{\scriptstyle d}="d1", 
   "a2";"b1"**\dir{-}?>*\dir{>},
   "b1";"a1"**\dir{-}?>*\dir{>},
   "a2";"c1"**\dir{-}?>*\dir{>},
   "a2";"d1"**\dir{-}?>*\dir{>}, 
 \end{xy},
 \end{align*}
which applying antisymmetry implies
\begin{align*} 
G_0=
\begin{xy}                           
  (0,0)*+UR{\scriptstyle b}="b1",    
  (3,5)*+UR{\scriptstyle a}="a1",   
  (6,0)*+UR{\scriptstyle a}="a2", 
  (12,0)*+UR{\scriptstyle c}="c1", 
   (9,-5)*+UR{\scriptstyle d}="d1", 
   "b1";"a2"**\dir{-}?>*\dir{>},
   "a1";"a2"**\dir{-}?>*\dir{>},
   "a2";"c1"**\dir{-}?>*\dir{>},
   "a2";"d1"**\dir{-}?>*\dir{>},
 \end{xy} \ - \ \begin{xy}                           
  (0,0)*+UR{\scriptstyle a}="a1",    
  (6,0)*+UR{\scriptstyle b}="b1",   
  (12,0)*+UR{\scriptstyle a}="a2", 
  (15,5)*+UR{\scriptstyle c}="c1", 
   (15,-5)*+UR{\scriptstyle d}="d1", 
   "a1";"b1"**\dir{-}?>*\dir{>},
   "b1";"a2"**\dir{-}?>*\dir{>},
   "c1";"a2"**\dir{-}?>*\dir{>},
   "d1";"a2"**\dir{-}?>*\dir{>},
\end{xy}.
\end{align*}


In the notation of Theorem~\ref{distinctspan},  the first graph on the right hand side above is  $G_1$, which in this case is also our $G_n$.
The second graph, which has fewer homogeneous edges, was not given a name in the proof of the theorem, but for convenience  it will be called $H$. So $G_0=G_1-H$, which implies $G_0\sim G_1$. 

We manipulate our $G_1$ following the process given for
 $G_n$ in the proof of Theorem~\ref{distinctspan}.  In the first graph we apply the Arnold relation at the right end, which after redrawing gives
\begin{align*}
  G_1=\ - \
\begin{xy}                           
  (0,0)*+UR{\scriptstyle b}="b1",    
  (6,0)*+UR{\scriptstyle a}="a1",   
  (18,0)*+UR{\scriptstyle a}="a2", 
  (12,0)*+UR{\scriptstyle c}="c1", 
   (9,-5)*+UR{\scriptstyle d}="d1", 
   "b1";"a1"**\dir{-}?>*\dir{>},
   "a1";"c1"**\dir{-}?>*\dir{>},
   "c1";"a2"**\dir{-}?>*\dir{>},
   "a1";"d1"**\dir{-}?>*\dir{>},
 \end{xy}\ - \
 \begin{xy}                           
  (0,0)*+UR{\scriptstyle b}="b1",    
  (6,0)*+UR{\scriptstyle a}="a1",   
  (12,0)*+UR{\scriptstyle a}="a2", 
  (18,0)*+UR{\scriptstyle c}="c1", 
   (9,-5)*+UR{\scriptstyle d}="d1", 
   "b1";"a1"**\dir{-}?>*\dir{>},
   "a2";"a1"**\dir{-}?>*\dir{>},
   "c1";"a2"**\dir{-}?>*\dir{>},
   "a1";"d1"**\dir{-}?>*\dir{>},
 \end{xy}.
\end{align*}

The first graph is a distinct vertex graph with fewer homogeneous edges. 
We apply the Arnold identity to the second graph to get

\begin{align*}
 G_1 = \ - \
\begin{xy}                           
  (0,0)*+UR{\scriptstyle b}="b1",    
  (6,0)*+UR{\scriptstyle a}="a1",   
  (18,0)*+UR{\scriptstyle a}="a2", 
  (12,0)*+UR{\scriptstyle c}="c1", 
   (9,-5)*+UR{\scriptstyle d}="d1", 
   "b1";"a1"**\dir{-}?>*\dir{>},
   "a1";"c1"**\dir{-}?>*\dir{>},
   "c1";"a2"**\dir{-}?>*\dir{>},
   "a1";"d1"**\dir{-}?>*\dir{>},
 \end{xy}\ + \
 \begin{xy}                           
  (0,0)*+UR{\scriptstyle b}="b1",    
  (6,0)*+UR{\scriptstyle a}="a1",   
  (18,0)*+UR{\scriptstyle a}="a2", 
  (24,0)*+UR{\scriptstyle c}="c1", 
   (12,0)*+UR{\scriptstyle d}="d1", 
   "b1";"a1"**\dir{-}?>*\dir{>},
   "d1";"a2"**\dir{-}?>*\dir{>},
   "c1";"a2"**\dir{-}?>*\dir{>},
   "a1";"d1"**\dir{-}?>*\dir{>},
 \end{xy}\ + \
 \begin{xy}                           
  (0,0)*+UR{\scriptstyle b}="b1",    
  (6,0)*+UR{\scriptstyle a}="a1",   
  (12,0)*+UR{\scriptstyle a}="a2", 
  (15,5)*+UR{\scriptstyle c}="c1", 
   (15,-5)*+UR{\scriptstyle d}="d1", 
   "b1";"a1"**\dir{-}?>*\dir{>},
   "a2";"a1"**\dir{-}?>*\dir{>},
   "c1";"a2"**\dir{-}?>*\dir{>},
   "d1";"a2"**\dir{-}?>*\dir{>},
   \end{xy}.
\end{align*}

Rewriting using the antisymmetry relation,

\begin{align*}
G_1  = 
\begin{xy}                           
  (0,0)*+UR{\scriptstyle b}="b1",    
  (6,0)*+UR{\scriptstyle a}="a1",   
  (18,0)*+UR{\scriptstyle a}="a2", 
  (12,0)*+UR{\scriptstyle c}="c1", 
   (9,-5)*+UR{\scriptstyle d}="d1", 
   "b1";"a1"**\dir{-}?>*\dir{>},
   "a1";"c1"**\dir{-}?>*\dir{>},
   "c1";"a2"**\dir{-}?>*\dir{>},
   "d1";"a1"**\dir{-}?>*\dir{>},
 \end{xy}\ -\
 \begin{xy}                           
  (0,0)*+UR{\scriptstyle b}="b1",    
  (6,0)*+UR{\scriptstyle a}="a1",   
  (18,0)*+UR{\scriptstyle a}="a2", 
  (24,0)*+UR{\scriptstyle c}="c1", 
   (12,0)*+UR{\scriptstyle d}="d1", 
   "b1";"a1"**\dir{-}?>*\dir{>},
   "d1";"a2"**\dir{-}?>*\dir{>},
   "a2";"c1"**\dir{-}?>*\dir{>},
   "a1";"d1"**\dir{-}?>*\dir{>},
 \end{xy}\ -\
 \begin{xy}                           
  (0,0)*+UR{\scriptstyle b}="b1",    
  (6,0)*+UR{\scriptstyle a}="a1",   
  (12,0)*+UR{\scriptstyle a}="a2", 
  (15,5)*+UR{\scriptstyle c}="c1", 
   (15,-5)*+UR{\scriptstyle d}="d1", 
   "b1";"a1"**\dir{-}?>*\dir{>},
   "a1";"a2"**\dir{-}?>*\dir{>},
   "a2";"c1"**\dir{-}?>*\dir{>},
   "a2";"d1"**\dir{-}?>*\dir{>},
\end{xy}.   
\end{align*}

Notice, the rightmost graph is our original $G = G_0$, so $G_1\sim -G_0$.  As we first showed above that $G=G_1-H$, 
we substitute $G_1=G+H$ into the left hand side of the equation above and solve for $G$ to get,

\begin{align*}
G  =  \frac{1}{2}
\begin{xy}
(-3,0)*{\text{\Huge(} },
  (0,0)*+UR{\scriptstyle b}="b1",    
  (6,0)*+UR{\scriptstyle a}="a1",   
  (18,0)*+UR{\scriptstyle a}="a2", 
  (12,0)*+UR{\scriptstyle c}="c1", 
   (9,-5)*+UR{\scriptstyle d}="d1", 
   "b1";"a1"**\dir{-}?>*\dir{>},
   "a1";"c1"**\dir{-}?>*\dir{>},
   "c1";"a2"**\dir{-}?>*\dir{>},
   "d1";"a1"**\dir{-}?>*\dir{>},
 \end{xy}\ - \
 \begin{xy}                           
  (0,0)*+UR{\scriptstyle b}="b1",    
  (6,0)*+UR{\scriptstyle a}="a1",   
  (18,0)*+UR{\scriptstyle a}="a2", 
  (24,0)*+UR{\scriptstyle c}="c1", 
   (12,0)*+UR{\scriptstyle d}="d1", 
   "b1";"a1"**\dir{-}?>*\dir{>},
   "d1";"a2"**\dir{-}?>*\dir{>},
   "a2";"c1"**\dir{-}?>*\dir{>},
   "a1";"d1"**\dir{-}?>*\dir{>},
 \end{xy}\ - \
 \begin{xy}                           
  (0,0)*+UR{\scriptstyle a}="a1",    
  (6,0)*+UR{\scriptstyle b}="b1",   
  (12,0)*+UR{\scriptstyle a}="a2", 
  (15,5)*+UR{\scriptstyle c}="c1", 
   (15,-5)*+UR{\scriptstyle d}="d1", 
   (18,0)*{\text{\Huge)}},
   "a1";"b1"**\dir{-}?>*\dir{>},
   "b1";"a2"**\dir{-}?>*\dir{>},
   "c1";"a2"**\dir{-}?>*\dir{>},
   "d1";"a2"**\dir{-}?>*\dir{>},
\end{xy}.
\end{align*}

\subsection{Letter-linking invariants and Fox derivatives}\label{examplecalculation}

We show how Fox derivatives work for the example given in Section 2.1. 
For comparison, the corresponding Fox derivative is $(\partial_a \partial_b \partial_a)^\circ(w)$.  One must calculate each derivative in turn.
First, 
\[
\partial_a(w)=1+a+aab-aabacb^{-1}c^{-1}a^{-1} -aabacb^{-1}c^{-1}a^{-1}a^{-1}-aabacb^{-1}c^{-1}a^{-1}a^{-1}cbc^{-1}a^{-1}.
\]
 Apply $\partial_b$ to each word to get
\begin{align*}
\partial_b(1)&=0,\\
\partial_b(a)&=0,\\
\partial_b(aab)&=aa,\\
\partial_b(aabacb^{-1}c^{-1}a^{-1})&=aa-aabacb^{-1},\\
\partial_b(aabacb^{-1}c^{-1}a^{-1}a^{-1})&=aa-aabacb^{-1},\\
\partial_b(aabacb^{-1}c^{-1}a^{-1}a^{-1}cbc^{-1}a^{-1})&=aa-aabacb^{-1}+aabacb^{-1}c^{-1}a^{-1}a^{-1}c.\\
\end{align*}
Substituting and simplifying, we have $\partial_b \partial_a(w)=-2aa+3aabacb^{-1}-aabacb^{-1}c^{-1}a^{-1}a^{-1}c$.  Compute $\partial_a$ of each term to get
\begin{align*}
\partial_a(2aa)&=2+2a,\\
\partial_a(3aabacb^{-1})&=3+3a+3aab,\\
\partial_a(aabacb^{-1}c^{-1}a^{-1}a^{-1})&=1+a+aab-aabacb^{-1}c^{-1}a^{-1}-aabacb^{-1}c^{-1}a^{-1}a^{-1}.
\end{align*}

Substitute the terms back into $\partial_b\partial_a(w)$, we have
\[
\partial_a\partial_b\partial_a(w)=2aab+aabacb^{-1}c^{-1}a^{-1}+aabacb^{-1}c^{-1}a^{-1}a^{-1}
\]
Apply the augmentation to obtain $(\partial_a\partial_b\partial_a)^\circ(w)=4$.  

This value agrees with our linking invariant, as it must
since their values both correspond to the functional on the free Lie algebra given by the graph \begin{xy}  
  (0,0)*+UR{\scriptstyle a}="b1",    
  (3,5)*+UR{\scriptstyle b}="a1",   
  (6,0)*+UR{\scriptstyle a}="b2", 
   "b1";"a1"**\dir{-}?>*\dir{>},
   "a1";"b2"**\dir{-}?>*\dir{>},
 \end{xy}.

Accounting for the algebra which was omitted, 
 the diagrammatic method  for letter-linking invariants leads to substantially easier work by hand than Fox derivatives.  In
our experience this difference increases as the
depth increases.  In further work,
the first author has found an algorithm which makes letter-linking invariants  more efficient than Fox derivatives computationally.



\bibliographystyle{alpha}
\bibliography{main}

\end{document}